\numberwithin{equation}{section}
\newtheorem{theorem}{Theorem}[section]
\newtheorem*{thkp}{Theorem (Kostant, Panyushev)}
\newtheorem*{theore}{Theorem}
\newtheorem{lemma}[theorem]{Lemma}
\newtheorem{prop}[theorem]{Proposition}
\newtheorem{defi}{Definition}[section]
\theoremstyle{definition}
\newtheorem{example}{Example}[section]
\newtheorem{rem}{Remark}[section]
\newcommand\I{\Cal I}
\newcommand\lu{{\overline{\ell}}}
\newcommand\ov{\overline}
\renewcommand\({\left(}
\renewcommand\){\right)}
\newcommand\be{\beta}
\newcommand\g{\mathfrak g}
\newcommand\h{\mathfrak h}
\newcommand\ha{\widehat{\mathfrak h}}
\newcommand\bb{\mathfrak b}
\newcommand\D{\Delta}
\renewcommand\l{\lambda}
\newcommand\Dp{\Delta^+}
\newcommand\Da{\widehat\Delta}
\newcommand\Pia{{\widehat\Pi}}
\newcommand\Dap{\widehat\Delta^+}
\newcommand\Wa{\widehat{W}}
\renewcommand\d{\delta}
\renewcommand\a{\alpha}
\renewcommand\b{\bb}
\renewcommand\k{\mathfrak k}
\renewcommand\th{\theta}
\renewcommand\i{{\mathfrak  i}}
\newcommand\nat{\mathbb N}
\newcommand\ganz{\mathbb Z}
\newcommand\s{\sigma}
\newcommand\x{{\bf x}}
\newcommand\real{\mathbb R}
\newcommand\C{\mathbb C}
\newcommand\R{\mathbb R}
\newcommand\si{\sigma}
\newcommand\G{\Gamma}
\renewcommand\ha{\widehat{\mathfrak h}}
\renewcommand\I{\mathcal I}
\newcommand{\Wab}{\mathcal W_\s^{ab}}
\renewcommand{\S}{\Sigma}
\renewcommand{\G}{\Gamma}
\begin{document}
\title[Borel stable abelian subalgebras]{On the structure of Borel stable abelian subalgebras in infinitesimal symmetric spaces}
\author[Cellini]{Paola Cellini}
\author[M\"oseneder Frajria]{Pierluigi M\"oseneder Frajria}
\author[Papi]{Paolo Papi}
\author[Pasquali]{Marco Pasquali}
\keywords{infinitesimal symmetric spaces, abelian subalgebras}
\subjclass[2010]{Primary 	17B20; Secondary 17B22, 17B40}
\begin{abstract} Let $\g=\g^{\bar 0}\oplus \g^{\bar 1}$ be a $\ganz_2$-graded Lie algebra. We study the posets of abelian subalgebras of $\g^{\bar 1}$ which are stable 
w.r.t. a Borel subalgebra of $\g^{\bar 0}$. In particular, we find out a natural parametrization of maximal elements and dimension formulas for them. 
We recover as special cases several results of Kostant, Panyushev,
 Suter.
\end{abstract}
\maketitle
\tableofcontents
\section{Introduction}
Let $\g$ be a finite dimensional complex  semisimple Lie algebra.
Let $\si$ be an  involution of $\g$ and $\g=\g^{\bar 0}\oplus \g^{\bar 1}$ be  the corresponding eigenspace decomposition.  Fix a Borel subalgebra $\b^{\bar 0}$ of the reductive Lie algebra $\g^{\bar 0}$. In this paper we deal with the following problem: {\it parametrize the maximal abelian $\b^{\bar 0}$-stable subalgebras of $\g^{\bar 1}$ and find  formulas for their dimension.}\par
This kind of problem has ancient roots. A  prototypical version of it is Schur's theorem \cite{Schur}, stating that there  exist
at most $\lfloor\frac{N^2}{4}\rfloor+1$ linearly independent commuting matrices in $gl(N)$. To make a long story short, developments related to Schur's result (whose proof had been simplified by Jacobson \cite{Jacobson} in the 50's) can be summed up as follows. 
\vskip7pt
\begin{Parallel}{0.48\textwidth}{0.48\textwidth}
  \ParallelLText{1945: Malcev \cite{Mal} found the maximal dimension of an abelian subalgebra of any simple $\g$.}
\ParallelRText{}
\end{Parallel}
\begin{Parallel}{0.48\textwidth}{0.48\textwidth}
 \ParallelLText{
 1965: Kostant  \cite{K1}
found a connection between the 
  eigenvalues  of a Casimir of $\g$ 
  and  the commutative subalgebras of $\g$.
  }
  \ParallelRText{2001: Panyushev \cite{Pan} ge\-ne\-ra\-lized Kostant's 
 results to the gra\-ded set\-ting.
 \phantom{phantom}  \phantom{phantom}}
\end{Parallel}
\begin{Parallel}{0.48\textwidth}{0.48\textwidth}
   \ParallelLText{2000: Peterson's Abelian ideals theorem (cf. \cite{KoIMRN}):  the abelian ideals of a Borel subalgebra of $\g$ are $2^{rk(\g)}$.}
\ParallelRText{2004: Cellini-M\"oseneder-Papi  found 
a uniform  enumeration of $\b^{\bar 0}$-stable abelian subalgebras of $\g^{\bar 1}$ (cf. \cite{IMRN}).}
\end{Parallel}
\begin{Parallel}{0.48\textwidth}{0.48\textwidth}
   \ParallelLText{2003: Panuyshev  \cite{Panadv} found a natural bijection
   between maximal abelian ideals of a Borel subalgebra of a simple Lie algebra  and long simple roots.}
    \ParallelRText{}
   \ParallelPar
   \ParallelLText{2004: Suter \cite{Suter} gave a conceptual explanation 
   of Malcev's result, providing a uniform formula for the dimension of maximal abelian ideals of a simple Lie algebra.}
  \ParallelRText{}  
\end{Parallel}
\vskip5pt
In these terms, solving our initial problems means filling in the missing slots in the right column. Indeed, what  links all these problems is their interpretation in terms of 
$\widehat{\mathfrak u}$-cohomology,  $\widehat{\mathfrak u}$ being the nilpotent radical of the parabolic subalgebra $(\C[t]\otimes \g)\cap \widehat L(\g,\si)$ in the affine
Kac-Moody algebra $\widehat L(\g,\si)$. This remark is at the basis of Kostant's paper 
 \cite{Koinv}, and it is generalized to the graded setting in
 \cite{MP}. In the latter paper  it is shown that  combining Garland-Lepowsky theorem on
$\widehat{\mathfrak u}$-cohomology with  the relationships between 
 the Laplacian associated to the standard Eilenberg-Chevalley boundary and the Casimir elements of $\widehat L(\g,\si)$ and $\g^{\bar0}$, it is possible to prove the following results, which motivate and give applications to our initial problem.\par
Given a commutative subalgebra $\mathfrak a$ of $\g^{\bar 1}$ with basis $v_1,\ldots,v_k$, consider the vector $v_{\mathfrak a}=v_1\wedge\cdots\wedge v_k\in \Lambda^k\g^{\bar 1}$ and let $A_k$ be the span of the $v_{\mathfrak a}$'s when 
$\mathfrak a$ ranges over the $k$-dimensional commutative subalgebras  of $\g^{\bar 1}$. Let finally $m_k$ be the maximal eigenvalue of the Casimir element of $\g^{\bar 0}$ w.r.t. the Killing form of $\g$ on 
$\Lambda^k\g^{\bar 1}$ and $M_k$ the eigenspace of eigenvalue $k/2$.
\begin{thkp}\ \begin{enumerate}
\item $m_k\leq k/2$;
\item $m_k= k/2$ if and only if $A_k\ne \emptyset$. In such a case $A_k=M_k$;
\item $A=\sum_kA_k$ is a multiplicity free $\g^{\bar 0}$-module whose irreducible pieces are indexed by the $\b^{\bar 0}$-stable abelian subalgebras of $\g^{\bar 1}$.
\end{enumerate}
\end{thkp}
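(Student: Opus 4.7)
I plan to follow Kostant's original approach in \cite{K1}, extended to the graded setting by Panyushev \cite{Pan}; as indicated in the introduction, the cleanest conceptual framework proceeds through $\widehat{\mathfrak{u}}$-cohomology for the affine Kac--Moody algebra $\widehat L(\g, \si)$ in the spirit of \cite{Koinv}. My first step is to introduce the $\g^{\bar 0}$-equivariant bracket map $\nu : \Lambda^2 \g^{\bar 1} \to \g^{\bar 0}$, $v \wedge w \mapsto [v,w]$, and from it build a Koszul-type differential
$$\dd : \Lambda^k \g^{\bar 1} \to \Lambda^{k-2} \g^{\bar 1} \otimes \g^{\bar 0},$$
which arises naturally as a piece of the Chevalley--Eilenberg differential for $\widehat{\mathfrak u}$. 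A direct check shows that $\dd(v_\mathfrak{a}) = 0$ if and only if $\mathfrak{a}$ is an abelian subalgebra of $\g^{\bar 1}$; in particular $A_k \subseteq \ker\dd$.

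The heart of the argument I propose is the identity
$$(\dd^* \dd + \dd\, \dd^*)\big|_{\Lambda^k \g^{\bar 1}} = \frac{k}{2}\cdot \mathrm{id} - C_{\g^{\bar 0}},$$
where $\dd^*$ is the adjoint of $\dd$ with respect to the Hermitian form induced by the Killing form. This is a Kostant-style Laplacian-versus-Casimir formula, which I would prove by expanding both operators in dual bases and using $\ad$-invariance of the Killing form together with the decomposition of the Casimir $C_\g$ into its $\g^{\bar 0}$ and $\g^{\bar 1}$ parts. Positivity of the left-hand side then immediately yields (1), and identifies the eigenspace $M_k$ with the harmonic space $\ker\dd \cap \ker\dd^*$.

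For (2) and (3), I would invoke the Garland--Lepowsky theorem: the harmonic part of $\Lambda^\bullet\g^{\bar 1}$ decomposes as a multiplicity-free direct sum of irreducible $\g^{\bar 0}$-modules $V_\mu$, indexed by a distinguished system of Weyl-group coset representatives which, by the enumeration of \cite{IMRN}, biject with the $\b^{\bar 0}$-stable abelian subalgebras $\mathfrak{a}\subseteq\g^{\bar 1}$. It then remains to verify that the highest weight vector of each $V_\mu$ is precisely $v_\mathfrak{a}$ for the associated $\mathfrak a$, yielding $M_k\subseteq A_k$; combined with $A_k \subseteq M_k$ (which requires a further check that $v_\mathfrak{a}\in\ker\dd^*$ whenever $\mathfrak a$ is abelian), this gives $A_k = M_k$ and part (3).

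The principal obstacle is the Laplacian-versus-Casimir identity: the remaining parts are either immediate from positivity or follow by applying the $\widehat{\mathfrak u}$-cohomology machinery together with the enumeration of Borel-stable abelian subalgebras from \cite{IMRN}.
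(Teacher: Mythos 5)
The paper itself contains no proof of this statement: it is quoted in the introduction as a known theorem of Kostant \cite{K1}, \cite{Koinv} and Panyushev \cite{Pan}, with the argument delegated to those references and to \cite{MP}, where it is obtained by combining the Garland--Lepowsky theorem on $\widehat{\mathfrak u}$-cohomology with the comparison between the Chevalley--Eilenberg Laplacian and the Casimir elements of $\widehat L(\g,\si)$ and $\g^{\bar 0}$. Your plan follows exactly that strategy (a Koszul-type differential detecting commutativity, a Laplacian-versus-Casimir identity giving the bound $k/2$ and identifying $M_k$ with the harmonic space, then Garland--Lepowsky together with the enumeration of \cite{IMRN} for multiplicity-freeness and the parametrization), so it agrees with the only "proof" the paper offers, namely this sketch. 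Two small points of precision: only $\dd^*\dd$ acts on $\Lambda^k\g^{\bar 1}$, since $\dd\,\dd^*$ lives on $\Lambda^{k-2}\g^{\bar 1}\otimes\g^{\bar 0}$, so the identity you need is $\dd^*\dd=\tfrac{k}{2}\,\mathrm{id}-C_{\g^{\bar 0}}$ on $\Lambda^k\g^{\bar 1}$; and the inclusion $A_k\subseteq M_k$ requires no separate verification that $v_{\mathfrak a}\in\ker\dd^*$, because $\dd v_{\mathfrak a}=0$ together with that identity already forces $C_{\g^{\bar 0}}v_{\mathfrak a}=\tfrac{k}{2}v_{\mathfrak a}$ (positivity of $\dd^*\dd$, for which you should fix a positive-definite Hermitian form coming from a compact real form, is needed only for the reverse inclusion $M_k\subseteq\ker\dd$).
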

Another result which is naturally explained by the cohomological approach is Peterson's theorem quoted above. This theorem admits an interpretation in terms of the geometry of alcoves, which we presently explain. Let $\b$ be a Borel subalgebra of $\g$, which we temporarily assume to be simple. An abelian ideal $\i$ of $\b$, being stable w.r.t. the Cartan component of $\b$,  is a sum of root subspaces relative to a dual order ideal
$A$
of positive roots of $\g$. Peterson's trick consists in considering the set of positive {\it affine} roots $-A+\d$, $\d$ being the fundamental imaginary root of $\widehat L(\g,\si)$. It's easy to check that this set is biconvex, hence is a set of generalized inversions of an element $w\in\Wa$, the Weyl group of  $\widehat L(\g,\si)$ (see Subsection \ref{is}).  
Peterson calls {\it minuscule} an element $w\in\Wa$  associated,  according to the above procedure, to an abelian ideal. It is shown in  \cite{CP1} that $w$ is minuscule   if and only if $wC_1\subset 2C_1$, $C_1$ being the fundamental alcove, i.e. a fundamental domain for the affine action of $\Wa$ on  $(\h_0)_\R^*$. This fact explains the enumerative result.  It  can be rephrased by saying that there exists a suitable simplex in $(\h_0)_\R^*$ paved by the abelian ideals. 
The graded generalization found in \cite{IMRN}, though much more complicated, is in the same spirit: the $	\b^{\bar 0}$-stable abelian subalgebras of $\g^{\bar 1}$ are indexed  by alcoves in a polytope $D_\si$, of which  explicit equations are provided. This result will be recalled and refined in Section \ref{bs}, and is the starting point for our investigation  of maximal $\b^{\bar 0}$-stable abelian subalgebras of $\g^{\bar 1}$. Let $\Wab$ be the subset of $\Wa$ formed by the elements indexing the alcoves of $D_\si$.  We locate 
a special subset $\mathcal M_\si$ (see \eqref{bw}) of bounding walls, with the property that if $w$ is maximal (i.e., the corresponding $\b^{\bar 0}$-stable abelian subalgebra is), then $w(C_1)$ has a face on $\mathcal M_\si$. We are therefore reduced to study the posets
$$\mathcal I_{\a,\mu}=\{w\in\Wab\mid w(\a)=\mu\},$$
where $\a$ is a simple root of $\widehat L(\g,\si)$ and $\mu\in \mathcal M_\si$.
This is done according to the following steps.
\begin{itemize}
\item We provide a criterion for  $\mathcal I_{\a,\mu}$ to be non empty.
Moreover we show that $\mathcal I_{\a,\mu}$, if non empty,  has minimum
(Theorem \ref{min}).
\item We determine the poset structure of $\mathcal I_{\a,\mu}$, by relating it to a 
quotient of the subgroup $\Wa_\a$ of $\Wa$ generated by the simple reflections orthogonal to $\a$ by a reflection subgroup $\Wa'_\a$ (Theorem \ref{slamu}).
\item We look at intersections among the posets $\mathcal I_{\a,\mu}$, and we find necessary and sufficient conditions in order  that the intersection of two such posets is nonvoid.
\item We study maximal elements in $\mathcal I_{\a,\mu}$. We show that when 
$\Wa'_\a$ is not standard parabolic, maximal elements appear in pairs  of $\mathcal I_{\a,\mu}$'s: if $w$ is maximal in 
$\mathcal I_{\a,\mu}$, then there exist a unique simple root $\beta$ and a unique wall $\mu'\in	\mathcal M_\s$ such that $w$  is also maximal in $\mathcal I_{\beta,\mu'}$ (Lemma \ref{coppie}).
\item We determine which maximal elements in $\mathcal I_{\a,\mu}$ are indeed maximal in $\Wab$ (Propositions \ref{mmu}, \ref{mmassimo}).
\end{itemize}\par
We finally provide a complete parametrization of maximal abelian $\b^{\bar 0}$-stable subalgebras (Theorem \ref{spazioparametri}) and uniform formulas for their dimension 
(Corollary \ref{dim}). Our results specialize nicely to Panyushev's and Suter's theorems
quoted above (see Remark \ref{PS}). But it is worthwhile to note that new pheno\-mena appear, like the presence of maximal subalgebras indexed by certain pairs of simple roots lying in different components of $\D_0$. To illustrate this fact, we state here our result in the special case when $\g^{\bar 0}$ is semisimple and $\s$ is of  inner type. In this case, by Kac's theory, we can choose a set of simple roots $\Pi$ for $\g$ in such a way that there is 
a  unique simple root $\tilde\a\in\Pi$ such that $	\s(x)=-x$ for $x\in \g_{\tilde\a}$ and 	$\s(x)=x$  for $x\in \g_{\beta}$ with $\beta\in \Pi\setminus\{\tilde\a\}$.
Let $\Dp_0=\coprod_{i=1}^r\Dp(\S_i)$ be the decomposition of the (positive) root system of $\g^{\bar 0}$ into irreducible subsystems. Set $\mu_i=\d-\theta_{\S_i},\,1\leq i\leq r$, $\theta_{\S_i}$ being the highest root of $\D(\S_i)$.
\begin{theore} In the above setting, the  maximal $\b^{\bar 0}$-stable abelian subalgebras of $\g^{\bar 1}$ are:
\begin{itemize}\item $\max \mathcal I_{\a,\mu_i},\,1\leq i\leq r$, $\a$ being a long simple root in $\G(\S_i)$ (see Def. \ref{Asigma}) if $\th_{\S_i}$ is long or 
in $\S_i$ if $\th_{\S_i}$ is short;
\item $\max \mathcal I_{\tilde\a,\tilde\a+\d}$;
\item $\max \left(\mathcal I_{\a,\mu_i}\cap \mathcal I_{\be,\mu_j}\right),\,1\leq i<j\leq r, \a\in\S_j,\be\in\S_i$ being  long simple roots.
\end{itemize}
The dimensions of these maximal subalgebras are given by formulas \eqref{dim1}, \eqref{dim1bis}, \eqref{dim2}.
\end{theore}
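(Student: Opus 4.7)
The strategy is to derive this statement by specializing the general parametrization Theorem~\ref{spazioparametri} and the dimension Corollary~\ref{dim} to the inner semisimple setting; the whole argument is essentially an unpacking of the four structural steps listed in the introduction, matching each general object with something concrete determined by the combinatorics of $\Pi$ and $\tilde\a$.

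First I would pin down the shape of $\mathcal M_\si$. Since $\si$ fixes pointwise every simple root except $\tilde\a$, the affine algebra $\widehat L(\g,\si)$ is the untwisted $\widehat L(\g)$, the root system $\D_0$ of $\g^{\bar 0}$ consists of the roots whose $\tilde\a$-coefficient is even, and $\g^{\bar 1}$ is spanned by the root spaces of the remaining roots. Unpacking the definition~\eqref{bw} of $\mathcal M_\si$ in this situation, the bounding walls relevant to maximal $\b^{\bar 0}$-stable abelian subalgebras turn out to be exactly the affine walls $\mu_i=\d-\th_{\S_i}$, one per irreducible component of $\D_0$, together with the wall $\tilde\a+\d$ coming from the unique $\bar 1$ simple root.

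Next, for each pair $(\a,\mu)$ with $\mu\in\mathcal M_\si$ I would apply Theorem~\ref{min} to identify $\min\mathcal I_{\a,\mu}$ and Theorem~\ref{slamu} to describe its poset structure through the quotient $\Wa_\a/\Wa'_\a$. The key combinatorial observation is that in the inner case $\Wa'_\a$ is standard parabolic precisely for long simple roots in $\G(\S_i)$ when $\th_{\S_i}$ is long, and for all $\a\in\S_i$ when $\th_{\S_i}$ is short; Propositions~\ref{mmu} and~\ref{mmassimo} then promote maximal elements of $\mathcal I_{\a,\mu_i}$ to maximal elements of $\Wab$ exactly in those cases, producing the first family of the theorem. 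The wall $\tilde\a+\d$ is treated symmetrically and contributes the singleton family $\max\mathcal I_{\tilde\a,\tilde\a+\d}$.

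The third family is the delicate point and the main obstacle of the proof. When $\Wa'_\a$ fails to be standard parabolic, Lemma~\ref{coppie} forces each maximal element to also lie in a second poset $\mathcal I_{\be,\mu'}$, so the genuine maximal $\b^{\bar 0}$-stable abelian subalgebra is indexed by the intersection $\mathcal I_{\a,\mu}\cap\mathcal I_{\be,\mu'}$. In the inner setting I would show that this pairing matches exactly the pairs $(\a,\mu_i)\leftrightarrow(\be,\mu_j)$ with $i\ne j$ and $\a\in\S_j$, $\be\in\S_i$ both long; invoking the intersection criterion mentioned in the introduction, one rules out crossings between $\tilde\a+\d$ and the $\mu_i$'s and excludes all non-long or same-component combinations, while restricting to $i<j$ avoids double-counting. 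The dimensions in the three cases then follow by direct specialization of Corollary~\ref{dim}, yielding the formulas~\eqref{dim1},~\eqref{dim1bis} and~\eqref{dim2}. The hardest step is the matching just described: one must verify both that no further maximal intersections survive and that the pairing forced by Lemma~\ref{coppie} genuinely crosses components of $\D_0$.
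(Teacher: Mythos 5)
Your plan is correct and follows essentially the paper's own route: the theorem is obtained by specializing Theorem \ref{spazioparametri} and Theorem \ref{dim} to the inner semisimple case ($k=1$, $\Pi_1=\{\tilde\a\}$, all real roots noncomplex), via exactly the chain Theorem \ref{min}, Theorem \ref{slamu}, Proposition \ref{mmu}, Lemma \ref{coppie} and Theorem \ref{minimax} that you cite. The one slip is the role you assign to Proposition \ref{mmassimo}: it does not ``promote'' maximal elements of the $\mathcal I_{\a,\mu_i}$, but rather shows that maximal elements of $\mathcal I_{\tilde\a,\mu_i}$ are \emph{not} maximal in $\Wab$ (being dominated by $\max\mathcal I_{\tilde\a,\tilde\a+\d}$), while maximality in $\Wab$ for $\a\notin\Pi_1$ is supplied by Proposition \ref{fuori}, which your sketch omits.
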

\par

\vskip5pt
\section{Setup}
\subsection{Twisted loop algebra and automorphisms}\label{ta}
Let $\g, \si$ be as in the Introduction.  We assume that $\si$ is indecomposable,  i.e. $\g$ has no nontrivial $\s$-invariant ideals. Let
$(\cdot,\cdot)$ be the Killing form of $\g$.  For $j\in\ganz$ set $\bar j=j+2\ganz$, and let $\g^{\bar j}=\{X\in \g\mid \s(X)=(-1)^jX\}$, so that we have  $\g=\g^{\bar 0}\oplus \g^{\bar1}$. 
We let $\widehat L(\g,\s)$ be the affine Kac-Moody Lie algebra associated to $\s$ in \cite[Section 8.2]{Kac}.
Let  $\h_0$ be a Cartan subalgebra of $\g^{\bar 0}$. As shown in \cite[Chapter 8]{Kac},  $\h_0$ contains a regular element $h_{reg}$ of $\g$. In particular the centralizer $Cent(\h_0)$ of $\h_0$ in $\g$ is a Cartan subalgebra of $\g$ and $h_{reg}$ defines a set of positive roots in the set of roots of $(\g, Cent(\h_0))$ and a set $\Dp_0$ of positive roots in the set $\D_0$ of roots for $(\g^{\bar 0},\h_0)$. Since $\s$ fixes $h_{reg}$,  we see that the action of $\s$ on the positive roots defines, once  Chevalley generators are fixed, a dia\-gram automorphism $\eta$ of $\g$ that, clearly, fixes $\h_0$. Set, using the notation of \cite{Kac}, $\ha=\h_0\oplus\C K\oplus\C d$. Recall that $d$ is the element of $\widehat L(\g,\s)$ acting on $\widehat L(\g,\s)\cap(\C[t,t^{-1}]\otimes \g)$ as $t\frac{d}{dt}$, while $K$ is a central element.
Define 
$\d'\in\ha^*$  by setting $\d'(d)=1$ and $\d'(\h_0)=\d'(K)=0$ and let $\l\mapsto \ov\l$ be the restriction map $\ha\to\h_0$. 
There is a unique extension, still denoted by $(\cdot,\cdot)$,  of the 
 Killing form of $\g$ to a nondegenerate symmetric bilinear invariant form on 
 $\widehat L(\g,\si)$.  Let $\nu:\ha\to\ha^*$ be 
the isomorphism induced by the form $(\cdot,\cdot)$, and denote again by $(\cdot,\cdot)$
the form induced on $\ha^*$. One has  $(\d',\d')=(\d',\h_0^*)=0$.

We let $\Da$ be the set of $\ha$-roots of $\widehat L(\g,\s)$. We can choose as set of  positive roots $\Dap=\Dp_{0}\cup\{\a\in\Da\mid \a(d)>0\}$. We let $\Pia=\{\a_0,\dots,\a_n\}$ be the corresponding set of simple roots. It is known that $n$ is the rank of $\g^{\bar0}$.  Recall that any $\widehat L(\g,\s)$ is a Kac-Moody Lie algebra $\g(A)$ defined by generator and relations starting from a generalized Cartan matrix $A$ of affine type. These matrices are
classified by means of Dynkin diagrams listed in \cite{Kac}.

Following \cite[Chapter 8]{Kac}, we can assume that $\s$ is the automorphism of type $(\eta;s_0,\dots, s_n)$, where $\eta$ is the automorphism of the diagram defined above. Note that, since $\s$ is an involution, $\eta^2=Id$. We do not assume here that $\g$ is simple, but, as explained in \cite{MMJ}, most arguments given in \cite{Kac} can be safely extended to the  setting where $\g$ is semisimple but not simple. This latter case, i.e. $\g=\k\oplus\k,\,\k$ a simple Lie algebra, $\s$ the flip, will be referred to as the {\it adjoint case}.  Recall that, if $a_0,\dots,a_n$ are the labels of the Dynkin diagram of $\widehat L(\g,\s)$ and $k$ is the order of $\eta$, then 
$k(\sum_{i=0}^ns_ia_i)=2$. Recall also that $s_0,\dots,s_n$ are relatively prime so we must have that $s_i\in\{0,1\}$ and $s_i= 0$ for all but at most two indices.
The case in which we have two indices equal to $1$ will be referred to as the {\it hermitian case} (indeed $\g/\g^{\bar 0}$ is an infinitesimal hermitian symmetric space).
Since $\s$ is the automorphism of type $(\eta;s_0,\dots, s_n)$, we can write $\a_i=s_i\d'+\ov{\a_i}$ and the set $\Pi_0=\{\a_i\mid s_i=0\}$ is the set of simple roots for $\g^{\bar 0}$ corresponding to $\Dp_0$.
Set also $\Pi_1=\Pia\setminus \Pi_0$.

Introduce $\d=\sum_{i=0}^na_i\a_i$ and note that $\d=(\sum_{i=0}^na_i s_i)\d'=\frac{2}{k}\d'$.
Set also $\a_i^\vee=\frac{2}{(\a_i,\a_i)}\nu^{-1}(\a_i)$ and let $\{a^\vee_0,\dots,a^\vee_n\}$ be the labels of the dual Dynkin diagram of $\widehat L(\g,\s)$. 

We assume that $K$ is the canonical central element \cite [6.2]{Kac},  $K=\sum_{i=0}^n a_i^\vee \a_i^\vee$. If we number the Dynkin diagrams as in \cite[Tables Aff1, Aff2, Aff 3]{Kac} then, by Sections  6.1, 6.2, 6.4 of \cite{Kac}, 
\begin{equation}\label{C}
K=\frac{2a_0}{\Vert \d-a_0\a_0\Vert}\nu^{-1}{(\d)}.
\end{equation}

Set finally  ${\bf g}=\sum_{i=0}^n a_i^\vee$. This number is called the dual Coxeter number of $\widehat L(\g,\s)$.

We let $\Wa$ be the Weyl group of $\widehat L(\g,\s)$. Set $(\h_0)_\R=\oplus_{\a\in\Pi}\R\a^\vee$ and $\ha_\R=\R d\oplus \R K\oplus(\h_0)_\R$. Set
\begin{equation}\label{caf}
C_1=\{h\in(\h_0)_\R\mid \ov\a_i(h)\geq -s_i,\,i=0,\ldots,n\}
\end{equation}  
be the fundamental alcove of $\Wa$. \subsection{Combinatorics of inversion sets}\label{is}  For $w\in\Wa$, we set $$N(w)=\{\a\in\Dap\mid w^{-1}(\a)\in-\Dap\}.$$
 If $\a$ is a real root in $\Dap$, we let $s_\a$ denote the reflection in $\a$. If $\a_i$ is a simple root we set $s_i=s_{\a_i}$.\par
The following facts are well-known. More details and references can be found in \cite{CP3}. We will often  use these properties  in the rest of the paper without further notice.
\begin{enumerate}
\item $N(w_1)=N(w_2)\implies w_1=w_2$. 
\item if $w=s_{i_1}\cdots s_{i_k}$ is a reduced expression for $w$, then $$N(w)=\{\a_{i_1}, s_{i_1}(\a_{i_2}),\ldots,s_{i_1}\cdots s_{i_{k-1}}(\a_{i_k})\};$$
if moreover $\beta_h=s_{i_1}\cdots s_{i_{h-1}}(\a_{i_h}),\,1\leq h\leq k$, then 
\begin{equation}\label{NW}
w=s_{\beta_k}s_{\beta_{k-1}}\cdots s_{\beta_1}.
\end{equation}
\item $N(w)$ is biconvex, i.e. both $N(w)$ and $\Dap\setminus N(w)$ are closed under root addition. Conversely, if $\Dap$ has no irreducible components of type $A_1^{(1)}$ and $L$ is a finite subset
of real roots which is biconvex, then there exists $w\in\Wa$ such that $L=N(w)$.
\item Denote by $\le$  the weak left Bruhat order: $w_1\le w_2$ if there exists a reduced expression for $w_1$ which is an initial segment of a reduced expression for $w_2$). Then
 $$w_1<w_2 \iff N(w_1)\subset N(w_2).$$
 \item Set $N^{\pm}(w)=N(w)\cup-N(w)$. Then $N^\pm(w_1w_2)=N^\pm(w_1)\dot{+}w_1(N^\pm(w_2))$, where $\dot{+}$ denotes the symmetric difference. In particular, the following properties are equivalent:
 \begin{enumerate}
 \item $N(w_1w_2)=N(w_1)\cup w_1(N(w_2))$;
 \item $\ell(w_1w_2)=\ell(w_1)+\ell(w_2)$;
 \item $w_1(N(w_2))\subset\Dap$.
 \end{enumerate}
\end{enumerate}
 We also introduce the sets of left and right descents for $w\in\Wa$:
\begin{align*}
L(w)&=\{\a\in\Pia\mid \ell(s_\a w)<\ell(w)\},\\
R(w)&=\{\a\in\Pia\mid \ell(ws_\a)<\ell(w)\}.
\end{align*}
We have that $L(w)=\Pia\cap N(w),\,R(w)=\Pia\cap N(w^{-1})$.
\subsection{Conventions on root systems}
\subsubsection{} We  number affine Dynkin diagrams as in \cite[Tables Aff1 and Aff2]{Kac}.
\subsubsection{} If $v\in \ha^*$, we set  $v^\perp=\{x\in \ha^*\mid (x,v)=0\}$.
\subsubsection{} If $S\subseteq\Pia$, we denote  by $\D(S)$ (resp. $\Dp(S)$) the root system generated by $S$ (resp. the set of positive roots corresponding to $S$). If 
$A\subseteq \Dap$  we   denote by $W(A)$ the Weyl group generated (inside $\Wa$) by the reflections in the elements of  
$A$.\par
We often identify subsets of the set of simple roots with their Dynkin diagram.\par
\subsubsection{} If $R$ is a finite or affine root system and $\Pi_R$ is a basis of simple roots, we write the expansion of a root $\gamma\in R$ w.r.t. $\Pi_R$ as
\begin{equation}\label{c}
\gamma=\sum_{\a\in\Pi_R}c_\a(\gamma)\gamma.
\end{equation}
We also set, for $\a\in R$,
$$supp(\a)=\{\be\in \Pi_R\mid c_\be(\a)\ne 0\}.$$
\subsubsection{} If $R$ is a finite irreducible root system and $\Pi$ is a set of simple roots for $R$, we denote by $\theta_R$ (or by $\theta_\Pi$) its highest root.  Recall that the highest root and the highest short root are the only dominant weights belonging to $R^+$. We will use this remark in the following form:
$$\a\in R^+,\,\a \text{ long },\,(\a,\beta)\geq 0\,\forall\,\beta\in R^+\implies \a=\theta_R.$$
\par
 \subsubsection{} We recall the definition of dual Coxeter number $g_R$ of a finite irreducible root system $R$. Write
$\theta_R^\vee=\sum_{\a\in\Pi_R}c_{\a^\vee}(\theta^\vee)\a^\vee$ and set
\begin{equation}\label{dcn}
g_R=1+\sum_{\a\in\Pi_R}c_{\a^\vee}(\theta^\vee).
\end{equation}

\subsection{Reflection subgroups and coset representatives} Let $G$ be a finite or affine reflection group and let $\ell$ be the length function with respect to a fixed set of Coxeter generators $S$. Let $R$ be the set of roots of $G$ in the geometric representation, $\Pi_R$ a system of simple roots for $R$, and $R^+$ the corresponding set of positive roots. 
Let $G'$ be a  subgroup of $G$ generated by reflections, and $R'$ be the set of roots $\a\in R$ such that $s_\a\in G'$, which is easily shown to be a root system.
By \cite{Deo}, 
 $$\Pi_{R'}=\{\a\in R^+\mid N(s_\a)\cap R'=\{\a\}\}$$
 is a set of simple roots for $R'$, whose associated set of positive roots is
$R'^+=R'\cap R^+$. \par
Given $g\in G$, we say that an element $w\in G'g$ is a minimal right  coset representative if $\ell(w)$ is minimal among the lengths of elements of $G'g$. 
It follows from \cite{Deo} by a standard argument that  a coset $G'g$ has  a unique minimal right coset representative $w$ and this element is characterized by the following property:
\begin{equation}\label{mcr}w^{-1}(\a)\in R^+\ \text{ for all $\a\in R'^+$}.\end{equation}

We will always choose as a coset representative for $G'g$  the minimal right  coset representative and (with a slight abuse of notation) we denote  by  $G'\backslash G$ the set of all minimal right coset representatives. Thus the restriction of the weak order of $G$ on 
$G'\backslash G$ induces a partial ordering on
$G'\backslash G$. When saying {\it the poset}  $G'\backslash G$, we
shall always refer to this ordering.
\par

\subsubsection{}\label{orbit}
If $\a\in R$ and $G'$ is the stabilizer of $\a$ in $G$, then, for each $g\in G$,  the minimal length  representative of $G'g$ is the unique minimal length element that maps $g^{-1}\a$ to $\a$. By formula \ref{mcr}, this element is characterized by the 
property
 \begin{equation}\label{mcr+}w^{-1}(\beta)\in R^+\ \text{ for all $\beta\in R^+$   orthogonal to $\a$.} \end{equation}

\smallskip
\subsubsection{}\label{posetparabolic} A reflection subgroup $G'$ of $G$ is   standard parabolic when $\Pi_{R'}\subseteq \Pi_R$.  In this case, if  $g\in G$ and $w$ is the minimal right coset representative of $G'g$, then $g=g'w$ with $g'\in G'$ and $\ell(g)=\ell(g')+\ell(w)$. In particular $N(g)\cap
R'=N(g')$. Moreover, it is well known that  $g$ itself is the minimal representative of $G'g$ if and only if $L(g)\subseteq \Pi_R\setminus \Pi_{R'}$. Therefore $G'\backslash G=\{w\in G\mid L(w)\subseteq  \Pi_R\setminus \Pi_{R'}\}$. If $G$ is finite, the poset $G'\backslash G$ has a unique minimal and a unique maximal
element. The identity of $G$  clearly corresponds to the minimum of
$G'\backslash G$. If $w_0$ is the longest element of $G$ and $w_0'$
is the longest element of $G'$, then we have that $N(w_0'w_0)=
R\setminus R'$. If $w\in G'\backslash G$, then $N(w)\subseteq R\setminus R'$; therefore  $w_0'w_0$
is the unique maximal element of  $G'\backslash G$.
Note that 
\begin{equation}\label{poset}
\ell(w_0'w_0)=|\Dp(R)|-|\Dp(R')|.
\end{equation}
\subsection{Special elements in finite Weyl groups}
We sum up in the following statement the content of Propositions 7.1 and 7.2 from \cite{CP3}. Attributions of the individual  results are done
there.  The properties below will be used many times in the sequel.

\begin{prop}\label{theta} Let $R$ be a finite irreducible root system, $W_R$ its Weyl group. Fix  a positive system $R^+$ and let $\Pi_R,\theta_R$ be the corresponding set of simple root and highest root, respectively.
\begin{enumerate} \item\label{1} For any long root $\a$ there exists a unique element $y_\a\in W_R$ of minimal  length such that $y(\a)=\theta_R$.
\item\label{2} $L({y_\a})\subset \{\beta\in\Pi_R\mid (\beta,\theta_R)\ne 0\}$.
\item\label{3} If conversely $v\in W_R$ is such that $v(\a)=\theta_R$ and $L(v)\subset \{\beta\in\Pi_R\mid (\beta,\theta_R)\ne 0\}$, then $v=y_\a$.
\item\label{4} If $\a\in \Pi_R$, then $\ell(y_\a)=g_R-2$, 
$g_R$ being the dual Coxeter number of $R$.
\item\label{5} If $\a\in\Pi_R$, and $\beta_1+\beta_2=\theta_R,\,\beta_1,\beta_2\in R^+$, then exactly one element among $\beta_1,\beta_2$ belongs to  $N(y_\a)$, and any element of  $N(y_\a)$ arises in this way.
\item\label{6} Conversely, if $y\in W_R$ is such that for any pair $\beta_1,\beta_2\in R^+$
such that $\be_1+\be_2=\theta_R$ exactly one of $\be_1,\be_2$ belongs to $N(y)$ and 
$\theta_R\notin N(y)$, then there exists a long simple root $\beta$ such that 
$y(\beta)=\theta_R$.
\item\label{7} $N(y_\a^{-1})=\{\beta\in R^+\mid (\beta,\a^\vee)=-1\}$.
\item\label{8} $\gamma\in R^+,\,(\gamma,\theta_R)=0\implies \gamma\notin N(y_\a)$.
\end{enumerate}
\end{prop}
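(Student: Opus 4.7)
The plan is to reduce parts (1)--(3) to standard parabolic coset theory, establish the key containment $N(y_\a)\subseteq R^+_1:=\{\gamma\in R^+\mid(\gamma,\theta_R^\vee)=1\}$ by a minimality-plus-descent argument valid for any long $\a$, and then derive the remaining parts from the pairing structure on $R^+_1$.

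For (1)--(3), I would first observe that the $W_R$-stabilizer of $\theta_R$ is the standard parabolic $\langle s_\beta\mid\beta\in\Pi_R,\,(\beta,\theta_R)=0\rangle$, by the classical fact that the stabilizer of a dominant weight is generated by the simple reflections orthogonal to it ($\theta_R$ being dominant as the highest root). Since all long roots are $W_R$-conjugate to $\theta_R$, the set $\{w\in W_R\mid w(\a)=\theta_R\}$ is a nonempty coset of this parabolic, and by \ref{orbit}--\ref{posetparabolic} it admits a unique minimal-length representative $y_\a$, characterized by $L(y_\a)\subseteq\Pi_R\setminus(\Pi_R\cap\theta_R^\perp)$; this yields (1) and (2). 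Part (3) is the corresponding uniqueness assertion: any $v$ in this coset with $L(v)\subseteq\{\beta\in\Pi_R:(\beta,\theta_R)\ne 0\}$ is itself the minimal representative, hence equals $y_\a$.

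Next I would prove $N(y_\a)\subseteq R^+_1$. First, $\theta_R\notin N(y_\a)$ because $y_\a^{-1}(\theta_R)=\a\in R^+$. If some $\gamma\in N(y_\a)$ had $(\gamma,\theta_R)=0$, then $s_\gamma$ would fix $\theta_R$, and since $\gamma\in N(y_\a)$ implies $\ell(s_\gamma y_\a)<\ell(y_\a)$ (the general strong exchange fact for positive roots in the inversion set), $s_\gamma y_\a$ would be a strictly shorter element sending $\a$ to $\theta_R$, contradicting minimality. Combined with the observation that $(\beta,\theta_R^\vee)\in\{0,1,2\}$ for $\beta\in R^+$, with value $2$ only at $\theta_R$ (by maximality), this forces $N(y_\a)\subseteq R^+_1$. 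Part (8) is now immediate, and (7) follows from $N(y_\a^{-1})=-y_\a^{-1}(N(y_\a))$: for $\beta\in N(y_\a)$ one has $(-y_\a^{-1}(\beta),\a^\vee)=-(\beta,\theta_R^\vee)=-1$, while conversely any $\gamma\in R^+$ with $(\gamma,\a^\vee)=-1$ satisfies $(y_\a(\gamma),\theta_R^\vee)=-1$, forcing $y_\a(\gamma)\in R^-$ since no positive root has inner product $-1$ with $\theta_R^\vee$.

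For (5), specific to simple $\a$, applying $y_\a^{-1}$ to $\beta_1+\beta_2=\theta_R$ gives $y_\a^{-1}(\beta_1)+y_\a^{-1}(\beta_2)=\a$, and since the simple root $\a$ is neither a sum of two positive nor of two negative roots, exactly one $\beta_i$ belongs to $N(y_\a)$; conversely, $\beta\in N(y_\a)\subseteq R^+_1$ pairs with $\theta_R-\beta\in R^+_1\setminus\{\beta\}$. For (4), the evaluation $2\rho(\theta_R^\vee)=\sum_{\gamma\in R^+}(\gamma,\theta_R^\vee)=2+|R^+_1|$ combined with $\rho(\theta_R^\vee)=g_R-1$ (immediate from \eqref{dcn}) gives $|R^+_1|=2(g_R-2)$, and by (5) each of the $g_R-2$ pairs contributes exactly one element to $N(y_\a)$, yielding $\ell(y_\a)=g_R-2$. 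Finally, for (6), set $\gamma:=y^{-1}(\theta_R)\in R^+$; if $\gamma$ decomposed as $\gamma_1+\gamma_2$ with $\gamma_i\in R^+$, a case analysis on the signs of $y(\gamma_i)$ yields a contradiction in each case — both positive contradicts the exactly-one-per-pair hypothesis; one positive and one negative forces the positive image to exceed $\theta_R$, impossible by maximality; both negative makes $\theta_R$ negative — so $\gamma$ is a long simple root with $y(\gamma)=\theta_R$. I expect the main subtlety to be the minimality step forcing $N(y_\a)\subseteq R^+_1$, which rests on the general identity $\gamma\in N(w)\Longrightarrow\ell(s_\gamma w)<\ell(w)$ for arbitrary (not necessarily simple) positive roots in the inversion set.
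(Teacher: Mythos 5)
Your proof is correct. Note first that the paper itself does not prove this proposition: it is explicitly imported as a summary of Propositions 7.1 and 7.2 of [CP3] (Cellini--Papi, \emph{Adv. Math.} 187), so there is no in-paper argument to compare against. What you have written is a complete, self-contained derivation, and each step checks out: the stabilizer of the dominant root $\theta_R$ is indeed the standard parabolic on $\Pi_R\cap\theta_R^\perp$, so (1)--(3) follow from the coset machinery of Subsections \ref{orbit} and \ref{posetparabolic} exactly as you say; the containment $N(y_\a)\subseteq R^+_1$ correctly combines $\ell(s_\gamma y_\a)<\ell(y_\a)$ for $\gamma\in N(y_\a)$ with the fact that $(\gamma,\theta_R^\vee)\in\{0,1,2\}$ on $R^+$ and equals $2$ only at $\theta_R$; the identity $N(y_\a^{-1})=-y_\a^{-1}(N(y_\a))$ and the positivity of $(\gamma,\theta_R)$ on $R^+$ give (7); the height argument applied to $y_\a^{-1}(\beta_1)+y_\a^{-1}(\beta_2)=\a$ gives (5); the count $|R^+_1|=2(g_R-2)$ via $(2\rho,\theta_R^\vee)$ together with the pairing $\beta\leftrightarrow\theta_R-\beta$ gives (4); and the three-case analysis on $y^{-1}(\theta_R)$ gives (6). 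This is essentially the standard route to these facts (and close in spirit to the arguments in [CP3], which likewise rest on the pairing of positive roots summing to $\theta_R$ and on minimal coset representatives), so your proof would serve as a legitimate replacement for the external citation. The one presentational caveat is that in (4) and (5) you should state explicitly that $\a$ is assumed long (as it must be for $y_\a$ to exist), and in (5) that $\theta_R/2$ is never a root, so the pairs $\{\beta,\theta_R-\beta\}$ genuinely partition $R^+_1$ into $g_R-2$ two-element sets.
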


 \section{Borel  stable abelian subalgebras and affine Weyl groups}\label{bs}
  \vskip5pt
Recall that $\Pi_0$ denotes  the set of simple roots of $\g^{\bar0}$ corresponding to $\Dp_0$.
 In general $\Pi_0$ is disconnected and we write $\S|\Pi_0$ to mean that $\S$ is a connected component of $\Pi_0$.   Clearly,  the Weyl group $W_0$ of $\g^{\bar 0}$ is the direct product of the $W(\S),\,\S|\Pi_0$.
If $\theta_\S$ is the highest root of $\D(\S)$, set
 \begin{align*}
&\Da_0=\{\a+\ganz k\d\mid \a\in\D_0\}\cup\pm\nat k\d,\\
 &\Pia_0=\Pi_0\cup\{k\d-\theta_\S\mid \S|\Pi_0\},\\ 
 &\Dap_0=\Dp_0\cup\{\a\in\Da_0\mid \a(d)>0\}.\end{align*}
 Denote by $\Wa_0$ the  Weyl group of $\Da_0$.
 Let  $\Da_{re}=\widehat W\Pia$ be the set of real roots of $\widehat L(\g,\s)$. If $\l\in\h_0^*$, then we let $\g_\l\subset \g$ be the corresponding weight space.
We say that a real root $\a$ is noncompact if $\g_{\ov\a}\subset \g^{\bar 1}$, compact if $\g_{\ov\a}\subset \g^{\bar 0}$, and complex if it is neither compact nor noncompact. Note that, by the very definition of $\widehat L(\g,\s)$, if $\a\in\Da_{re}$, then $k\d+\a\in\Da$, while, if $k=2$, $\d+\a\in\Da$ if and only if $\a$ is complex. 
Clearly, if $\eta=Id$, then any real root is either compact or noncompact. It is shown in \cite{CKMP} that, if $\g$ is simple and $\eta\ne Id$, then a real root $\a$ is either compact or noncompact if and only if  $\a$ is a long root (i.e.,  $\Vert\a\Vert$ is largest among the possible root lengths). If $\g$ is not simple, since $\s$ is indecomposable, all the real roots are complex.

If $\a\in\Da$, set (cf. \eqref{c})
 $$
 ht_\s(\a)=\sum_{i=0}^n s_ic_{\a_i}(\a)
 $$
 and, for  $i\in\ganz$,   
$$\Da_i=\{\a\in\Da\mid ht_\s(\a)=i\}.$$

\begin{rem}\label{hts} 
Since $\a_i=s_i\delta'+\ov\a_i$ (Section \ref{ta}), for any $\a\in \Da$, we have that  $\a=ht_\s(\a)\d'+\ov\a$. In particular,  since $k\delta=2\delta'$, $ht_\s(k\delta)=2$.
By definition, the roots $\theta_\S$, $\S|\Pi_0$, are the maximal roots having  $\s$-height equal to $0$, with respect to the usual order $\le$ on roots: $\a\le \be$ if and only if $\be-\a$ is a sum of positive roots or zero. 
It follows that the roots $k\d-\theta_\S$ are the minimal roots  having  $\s$-height equal to $2$. More generally,  if $s\in\ganz$, $\{sk\d-\theta_\S\mid \S|\Pi_0\}$ is the set of  minimal roots in  $\Da_{2s}$. Similarly, 
$\Pi_1+sk\d$  is the set of  minimal roots in $\Da_{2s+1}$. 
\end{rem}

\begin{defi} An element $w\in \Wa$ is called $\s$-minuscule if $N(w)\subset \Da_1$. 
We denote by $\Wab$ the set of $\s$-minuscule elements of $\Wa$. 
\end{defi}

We regard $\Wab$ as a poset under the weak Bruhat order.

\begin{rem} Note that in the adjoint case $\g=\k\oplus \k$, $\k$ simple, $w$ is $\si$-minuscule if and
only if $N(w)\subset -\Dp_\k+\d$, $\Dp_\k$ being the  set of positive roots of $\k$. So we recover Peterson's notion of minuscule elements quoted in the Introduction.
\end{rem}

\begin{rem} It will be useful, from a notational point of view, to introduce the following generalization of the $\s$-height. Given $A\subseteq\Pia$ and $\gamma\in\Da$, set
$$ht_A(\gamma)=\sum_{\a\in A}c_{\a}(\gamma).$$
In particular, the $\s$-height equals $ht_{\Pi_1}$ and the usual height equals $ht_{\Pia}$. In these two cases we will  keep using $ht_\s,\,ht$.
\end{rem}

\vskip5pt
 Let $a$ be the squared length of a long root in $\Dap$. Define
\begin{equation}\label{pi0*}
\Pia_0^*=
\Pi_0\cup
\left\{ k\d-\theta_\S\mid a\leq2\Vert\theta_\S\Vert^2\right\},
\end{equation}
\begin{equation}\label{phis}\Phi_\s=
\Pia^*_0\cup\{\a+k\d\mid \a\in\Pi_1, \a \text{ {long and noncomplex}}\}
\end{equation}
\vskip5pt

\begin{rem}\label{phisigma} \item{(1)}
It is immediate to see that  $\Pia_0^*=\Pia_0$, unless $\widehat L(\g,\s)$ is of type $G_2^{(1)}$ or $A_2^{(2)}$.  Indeed, in the latter cases there exists $\S | \Pi_0$ such that $\tfrac{a}{\Vert \th_\S\Vert^2}=3,4$, respectively.
\item{(2)} When $|\Pi_1|=2$, then both roots in $\Pi_1$ are long; moreover,  for any $\S|\Pi_0$, both roots in $\Pi_1$ are not orthogonal to $\S$. This is {most} easily seen 
by a brief inspection of the untwisted Dynkin diagrams, recalling that, by Section \ref {ta},  $k=1$ and the labels of the roots in $\Pi_1$ in the Dynkin diagram of $\Pia$ are equal to 1.
Anyway, we provide a uniform argument. Let $\Pi_1=\{\a,\be\}$: since  $k=1$ and 
$c_\a(\delta)=1$, $\delta-\a$ is a root and belongs to $\D(\Pia\setminus \{\a\})$. {Since the support of $\delta-\a$ is $\Pia\setminus \{\a\}$, we see that $\Pia\setminus \{\a\}$ is connected.} We claim that $\delta-\a$ is the highest root $\D(\Pia\setminus \{\a\})$. Otherwise, if $\beta>\delta-\a$ and $\beta\in \D(\Pia\setminus \{\a\})$, then  $\beta-\delta$ would be a root with positive coefficients in some simple root in   $\Pia\setminus \{\a\}$ and 
coefficient $-1$ in $\a$. In particular, we obtain that $\delta-\a$ is long with respect to  $\D(\Pia\setminus \{\a\})$ and,  since it has the same length as $\a$, that  
both $\delta-\a$ and $\a$ are long. {For proving the second claim, observe that $\S\cup\{\beta\}\subseteq Supp(\d-\a)=\Pia\setminus\{\a\}$ and the latter is connected.
Hence $\be$  has to be nonorthogonal to $\S$. Switching the role of $\a$ and $\beta$ we get the second claim.}
 \end{rem}
\vskip5pt
Consider the set
$$
D_\s=\bigcup_{w\in W^\s_{ab}}wC_1.
$$
(cf. \eqref{caf}).
 If $\a\in\Da$ then we let $H^+_\a=\{h\in(\h_0)_\R\mid \a(d+h)\ge0\}$. 
The following result refines  \cite[Proposition 4.1]{IMRN}.

\begin{prop}\label{Dsigma}
$$D_\s=\bigcap_{\a\in\Phi_\s}H^+_\a.
$$
\end{prop}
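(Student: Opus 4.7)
The plan is to prove the two inclusions separately.

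For $D_\s \subseteq \bigcap_{\a \in \Phi_\s} H^+_\a$: every $\a \in \Phi_\s$ satisfies $ht_\s(\a) \ne 1$. Indeed, roots in $\Pi_0 \subseteq \Pia_0^*$ have $\s$-height $0$; the roots $k\d - \theta_\S \in \Pia_0^*$ have $\s$-height $2$, since $ht_\s(k\d)=2$ by Remark~\ref{hts} and $\theta_\S$ lies in the root lattice of $\Pi_0$; and the shifted roots $\a + k\d$ with $\a \in \Pi_1$ have $\s$-height $3$. Hence $\Phi_\s \cap \Da_1 = \emptyset$, so for every $w \in \Wab$ we have $N(w) \cap \Phi_\s = \emptyset$, giving $w\overline{C_1} \subseteq H^+_\a$ for each $\a \in \Phi_\s$.

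For the reverse inclusion, let $h \in \bigcap_{\a \in \Phi_\s} H^+_\a$. Since $\overline{C_1}$ is a fundamental domain for the affine action of $\Wa$, there exists $w \in \Wa$ with $h \in w\overline{C_1}$; I choose such a $w$ of minimal length and aim to show $w \in \Wab$. If not, pick $\beta$ minimal in $N(w) \setminus \Da_1$ under the partial order on $\Dap$. The engine of the argument is the subclaim $\beta \in \Phi_\s$: granting this, $\beta \in N(w)$ forces $\beta(d+h) \le 0$ and $\beta \in \Phi_\s$ forces $\beta(d+h) \ge 0$, so $h \in H_\beta$. Consequently $h$ also lies in the reflected alcove $ws_{w^{-1}\beta}\overline{C_1}$; since $\beta \in N(w)$ implies $w^{-1}\beta \in -\Dap$ (so $\ell(ws_{w^{-1}\beta}) = \ell(w) - 1$, and one checks $N(ws_{w^{-1}\beta}) = N(w) \setminus \{\beta\}$), this contradicts the minimality of $\ell(w)$.

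The subclaim is proved by case analysis on $ht_\s(\beta)$, using Remark~\ref{hts} to identify the minimal positive roots of each $\s$-height and the biconvexity of $N(w)$. When $ht_\s(\beta) = 0$, $\beta$ is a positive root of $\g^{\bar 0}$; any decomposition $\beta = \gamma_1 + \gamma_2$ into two positive $\g^{\bar 0}$-roots (possible unless $\beta$ is simple) combined with biconvexity and minimality forces $\beta \in \Pi_0 \subseteq \Phi_\s$. When $ht_\s(\beta) \ge 2$, the analogous biconvexity-plus-minimality reasoning, exploiting that any summand of a decomposition of $\beta$ lying in $N(w)$ must by minimality have $\s$-height exactly $1$, forces $\beta$ to be minimal among positive roots of its $\s$-height. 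By Remark~\ref{hts} this limits $\beta$ either to $sk\d - \theta_\S$ (for even $\s$-height $2s$) or to a shifted $\a + sk\d$ with $\a \in \Pi_1$ (for odd $\s$-height $2s+1$). The generic conclusion $\beta \in \Phi_\s$ follows directly, the length restriction $a \le 2\|\theta_\S\|^2$ and the ``long and noncomplex'' conditions recording exactly the structural facts that hold in the generic range. The main obstacle will be the finitely many exceptional cases in which the naive candidate for $\beta$ falls outside $\Phi_\s$: the $G_2^{(1)}$ and $A_2^{(2)}$ anomalies of Remark~\ref{phisigma}(1), and the analogous short-or-complex $\Pi_1$-anomalies. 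In each such case I dispatch the contradiction by writing $\beta$ explicitly as a non-negative real combination of elements of $\Phi_\s$ -- a finite check on the affine Dynkin diagrams, leveraging (for the hermitian subcases) that by Remark~\ref{phisigma}(2) both elements of $\Pi_1$ are long and nonorthogonal to every $\S \mid \Pi_0$. Such an expansion makes the constraint $H^+_\beta$ automatic at $h$ and reinstates the wall-switching step, completing the proof.
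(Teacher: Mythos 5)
Your overall strategy is sound but genuinely different from the paper's. The paper does not reprove the polytope description from scratch: it quotes Propositions 4.1 and 5.8 of \cite{IMRN}, which already give $D_\s=\bigcap_{\a\in\Phi'_\s}H^+_\a$ for the larger set $\Phi'_\s=\Pia_0\cup\{\a+k\d\mid\a\in\Pi_1,\ \a\text{ long and noncomplex}\}$, and then only has to show that the hyperplanes $H^+_{k\d-\theta_\S}$ with $a>2\Vert\theta_\S\Vert^2$ (which occur only in types $G_2^{(1)}$ and $A_2^{(2)}$) are redundant; this it does by exhibiting the auxiliary positive root $k\d-2\tilde\a-3\theta_\S\in\Dp_0$, which amounts to writing $k\d-\theta_\S=\tfrac13(k\d-2\tilde\a-3\theta_\S)+\tfrac23(k\d+\tilde\a)$ as a nonnegative combination of constraints already known to hold on $D_\s$. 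Your wall-crossing argument (choose $w$ of minimal length with $h\in wC_1$, take $\beta$ minimal in $N(w)\setminus\Da_1$, show $\beta$ is forced to lie in $\Pi_0$ or to be minimal in its $\s$-height level, then reflect in $H_\beta$) is the right mechanism and is essentially how the quoted results of \cite{IMRN} are proved; both the easy inclusion and the reduction of $\beta$ to the minimal roots of Remark \ref{hts} are correct as you state them. One small slip: $N(ws_{w^{-1}\beta})=N(w)\setminus\{\beta\}$ is false in general, since the length of $s_\beta w$ can drop by more than one; but all you need is $\ell(s_\beta w)<\ell(w)$ together with $h\in s_\beta wC_1$, which does hold.

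The gap is that the entire nontrivial content of the proposition is concentrated in the step you defer to ``a finite check on the affine Dynkin diagrams'': when the minimal root $\beta$ of $\s$-height at least $2$ is \emph{not} in $\Phi_\s$ --- i.e.\ $\beta=sk\d-\theta_\S$ with $a>2\Vert\theta_\S\Vert^2$, or $\beta=\a+sk\d$ with $\a\in\Pi_1$ short or complex (as happens, e.g., for $B_n^{(1)}$ with $\Pi_1=\{\a_n\}$, in twisted types, and in the adjoint case) --- you must actually produce a nonnegative expression of $\beta$ in terms of $\Phi_\s$, and none is given. Such expressions exist but are not automatic: the paper's own three-step computation for $G_2^{(1)}$ and $A_2^{(2)}$ is exactly one instance of this check, and the short/complex $\Pi_1$ cases are precisely what \cite[Propositions 4.1 and 5.8]{IMRN} supply. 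Until those identities are written down (including, for $s\ge2$, an expression of $k\d$ itself as a nonnegative combination of $\Phi_\s$), the reverse inclusion is not established.
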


\begin{proof} By  \cite[Propositions 4.1 and 5.8]{IMRN} and by Remark \ref{phisigma} (2), 
we have that  $D_\s=\bigcap\limits_{\a\in\Phi'_\s}H^+_\a
$, where $\Phi'_\s=\Pia_0\cup\{\a+k\d\mid \a\in\Pi_1, \a\text{ long and noncomplex} \}$. {(Actually Propositions 4.1 and 5.8 of \cite{IMRN} cover only the cases when $\g$ is simple, but the argument is easily extended to the adjoint case.)}
 Therefore, we have only to prove that we can restrict from $\Pia_0$ to $\Pia_0^*$, i.e. that if $\S$ is a component of $\Pi_0$ 
such that $a>2\Vert\theta_\S\Vert^2$, then $\theta_\S(x)\leq k$ for all $x\in D_\s$. By Remark \ref{phisigma} (1), $\Pia$ is of type $G_2^{(1)}$ or $A_2^{(2)}$, in particular $\Pi_1$ has a single element: set $\Pi_1=\{\tilde \a\}$. {Note that $\tilde \a$ is long.}
We proceed in steps.
\begin{enumerate}
\item $\tilde\a+3\theta_\S\in\Dap$: this follows from $(\tilde\a,\theta_\S^\vee)<-2$.
\item $2\tilde\a+3\theta_\S\in\Dap_{re}$: indeed  $(\tilde\a,\tilde\a+3\theta_\S)<0$ and $\Vert 2\tilde\a+3\theta_\S\Vert>0$.
\item $k\d-2\tilde\a-3\theta_\S\in \Dp_0$: relation 
$k\d-2\tilde\a-3\theta_\S\in\Da$ follows from (2); it is also clear that it belongs to $\D_0$. So it remains to show that it is positive. Indeed (1) implies $k\d-\tilde\a-3\theta_\S\in\Da$, and this root is positive since $c_{\tilde\a}(k\d-\tilde\a-3\theta_\S)=1$, hence $(k\d-\tilde\a-3\theta_\S)-\tilde\a\in \Dap$.
\end{enumerate}
Now we can conclude, since $(k\d-2\tilde\a-3\theta_\S)(x)\geq 0$ implies $\theta_\S(x)\leq \frac{k}{3}-\frac{2}{3}(\tilde\a,x)\leq k$.
\end{proof}

\begin{rem} In the adjoint case $\g=\k\oplus\k$, $\k$ simple, $D_\s$ is twice the fundamental alcove of the affine Weyl group of $\k$.
\end{rem}

\vskip10pt
We let $\mathcal I_{ab}^\s$ be the set of abelian subalgebras in $\g^{\bar 1}$ that are stable under the action of the Borel subalgebra $\b^{\bar 0}$ of $\g^{\bar 0}$ corresponding to $\Dp_0$. Inclusion turns $\mathcal I_{ab}^\s$ into a poset. 

\begin{prop}\cite[Theorem 3.2]{IMRN}  Let $w\in\Wab$. Suppose $N(w)=\{\beta_1,\ldots,\beta_k\}$. The map $\Wab\to\mathcal I_{ab}^\s$ defined by
$$w\mapsto\bigoplus_{i=1}^k\g^{\bar 1}_{-\ov\beta_i}$$
is a poset isomophism.
\end{prop}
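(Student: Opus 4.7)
My plan is to exhibit an explicit inverse by sending $\mathfrak a\subseteq\g^{\bar1}$ to the unique $w\in\Wa$ whose inversion set consists of those positive affine roots in $\Da_1$ whose restriction to $\h_0^*$ is minus a weight of $\mathfrak a$. Two basic facts underlie the entire argument: first, the restriction map $\beta\mapsto\ov\beta$ is injective on $\Da_1$, since every such $\beta$ is uniquely $\delta'+\ov\beta$; second, a finite subset of $\Da_{re}\cap\Dap$ equals $N(w)$ for some $w\in\Wa$ iff it is biconvex (the $A_1^{(1)}$ exception is harmless, because there $\Da_1$ contains only two positive real roots whose sum is an imaginary multiple of $\delta$, and the resulting $\Wab$ can be inspected directly).

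First I verify well-definedness. For $\b^{\bar0}$-stability of $\mathfrak a_w:=\bigoplus_i\g^{\bar1}_{-\ov\beta_i}$, fix $\a\in\Dp_0$ and $\beta_i\in N(w)$; since $\a\in\Da_0$ while $N(w)\subseteq\Da_1$, we have $\a\in\Dap\setminus N(w)$. Whenever $\beta_i-\a$ is a positive real root, biconvexity applied to the decomposition $\beta_i=\a+(\beta_i-\a)\in N(w)$ forces $\beta_i-\a\in N(w)$, so that $[\g^{\bar0}_\a,\g^{\bar1}_{-\ov\beta_i}]\subseteq\mathfrak a_w$. For abelianness, suppose $[\g^{\bar1}_{-\ov\beta_i},\g^{\bar1}_{-\ov\beta_j}]\ne0$. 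Using the $\ad$-invariance of the Killing form, which pairs $\g^{\bar1}_\mu$ nondegenerately with $\g^{\bar1}_{-\mu}$, this is equivalent to $[\g^{\bar1}_{\ov\beta_i},\g^{\bar1}_{\ov\beta_j}]\ne0$; lifting this bracket to $\widehat L(\g,\s)$, it says that $\beta_i+\beta_j$ is a positive real affine root, so biconvexity of $N(w)$ forces $\beta_i+\beta_j\in N(w)\subseteq\Da_1$, contradicting $ht_\s(\beta_i+\beta_j)=2$.

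Monotonicity and injectivity follow at once from $w_1\le w_2\iff N(w_1)\subseteq N(w_2)$ combined with the injectivity of restriction on $\Da_1$. For surjectivity, given $\mathfrak a=\bigoplus_{\mu\in S}\g^{\bar1}_\mu\in\mathcal I_{ab}^\s$, set $L_\mathfrak a:=\{\delta'-\mu:\mu\in S\}\subseteq\Da_{re}\cap\Dap\cap\Da_1$ and verify biconvexity. Closedness of $L_\mathfrak a$ is vacuous: the abelianness argument above, read in reverse, shows that no sum of two elements of $L_\mathfrak a$ is a real affine root. For closedness of the complement, $\s$-height bookkeeping reduces to the case $\gamma_1\in\Da_0\cap\Dap$ and $\gamma_2\in\Da_1\setminus L_\mathfrak a$ with $\gamma_1+\gamma_2\in L_\mathfrak a$; the positivity constraint (every positive real affine root has nonnegative $\s$-height, vanishing iff the root lies in $\Dp_0$) then forces $\gamma_1\in\Dp_0$, and $\b^{\bar0}$-stability of $\mathfrak a$ applied to the nonzero finite bracket $[\g^{\bar0}_{\ov\gamma_1},\g^{\bar1}_{-\ov\gamma_1-\ov\gamma_2}]\subseteq\g^{\bar1}_{-\ov\gamma_2}$ (nonzero because both $\ov\gamma_1$ and $-\ov\gamma_1-\ov\gamma_2$ are $\g$-roots and their sum $-\ov\gamma_2$ is a $\g^{\bar1}$-weight) puts $-\ov\gamma_2$ among the weights of $\mathfrak a$, contradicting $\gamma_2\notin L_\mathfrak a$. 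Hence $L_\mathfrak a=N(w)$ for a unique $w\in\Wab$, and $\mathfrak a_w=\mathfrak a$ by construction.

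The step I expect to require the most care is the passage between brackets in $\g$ and in $\widehat L(\g,\s)$, invoked both in the abelianness argument and in its converse for surjectivity. One must verify that the vanishing of $[\widehat L(\g,\s)_{\beta_i},\widehat L(\g,\s)_{\beta_j}]$ corresponds exactly to the vanishing of the appropriate finite bracket between the $\ov\beta_i$- and $\ov\beta_j$-weight spaces of $\g^{\bar1}$, which is sensitive to the twisting when $k=2$ (where $\widehat L(\g,\s)_{\beta}$ for $\beta\in\Da_1$ is identified with $t\otimes\g^{\bar1}_{\ov\beta}$ inside the twisted loop algebra). Once this dictionary is nailed down, the remainder of the proof is a clean application of biconvexity.
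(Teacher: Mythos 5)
The paper does not prove this statement; it is quoted verbatim from \cite[Theorem 3.2]{IMRN}, and your reconstruction follows exactly the route of that reference: the dictionary between biconvex sets of positive real roots of $\s$-height $1$ and abelian $\b^{\bar 0}$-stable subspaces, with well-definedness, order-compatibility and injectivity all reduced to biconvexity of $N(w)$ and injectivity of $\beta\mapsto\ov\beta$ on $\Da_1$. The forward direction and the monotonicity/injectivity arguments are correct as written.

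There is, however, a genuine gap in the surjectivity step, at the very first move: you write $\mathfrak a=\bigoplus_{\mu\in S}\g^{\bar 1}_\mu$ and assert $L_{\mathfrak a}=\{\d'-\mu:\mu\in S\}\subseteq\Da_{re}\cap\Dap\cap\Da_1$ without justification. The weight $\mu=0$ is the problem: $\g^{\bar 1}_0=\g^{\bar 1}\cap Cent_\g(\h_0)$ can be nonzero when $\s$ is of outer type (it is $t^{-1}$ times the imaginary root space $\widehat L(\g,\s)_{-\d'}$), in which case $\d'-0$ is not a real root, the weight space need not be one-dimensional, and your candidate inversion set is not even a subset of $\Da_{re}$. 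You must first prove that $\mathfrak a\cap\g^{\bar 1}_0=0$ for every $\mathfrak a\in\mathcal I^\s_{ab}$; this is true but requires an argument, e.g.: by Proposition \ref{om} the highest weights of $\g^{\bar 1}$ are the $-\ov\a$, $\a\in\Pi_1$, which are nonzero, so a nonzero $v\in\mathfrak a\cap\g^{\bar 1}_0$ admits $e\in\g^{\bar 0}_{\gamma}$, $\gamma\in\Dp_0$, with $[e,v]\ne 0$; then $[e,v]\in\mathfrak a$ by stability, while a direct computation in $\h\oplus\g_{\tilde\gamma}\oplus\g_{\s\tilde\gamma}$ (with $\tilde\gamma$ a $\g$-root over $\gamma$) shows $[v,[e,v]]\ne 0$, contradicting abelianness. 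A second, more minor, imprecision of the same flavour occurs twice: when $\ov\beta_i+\ov\beta_j=0$ the sum $\beta_i+\beta_j$ equals the \emph{imaginary} root $k\d$, not a real root, so your phrase ``is a positive real affine root'' does not cover this case; the contradiction still obtains (closure of $N(w)$ under root addition would put an imaginary root into $N(w)$, and conversely $[\g^{\bar 1}_{\ov\beta_i},\g^{\bar 1}_{-\ov\beta_i}]\ne 0$ always), but both in the abelianness check and in the closedness of $L_{\mathfrak a}$ this case must be mentioned explicitly, since the criterion ``finite biconvex set of real roots $=N(w)$'' requires closure with respect to \emph{all} root sums, imaginary ones included.
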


\begin{rem}\label{inL}
The natural isomorphism of $\g^{\bar 0}$-modules $\g^{\bar 1}\cong t^{-1}\otimes \g^{\bar 1}$ { maps the $\b^{\bar 0}$-stable abelian subspaces of $\g^{\bar 1}$ to  $\b^{\bar 0}$-stable abelian subspaces of $\widehat L(\g,\s)$.} Through this isomorphism, the map of the above proposition associates to $w\in \Wab$ the $\b^{\bar 0}$-stable abelian  subalgebra 
$\bigoplus_{i=1}^k\widehat L(\g,\s)_{-\be_i}$.
\end{rem}

\bigskip
Set 
\begin{equation}\label{bw}
\mathcal M_\s=\Phi_\s\backslash (\Pia\cap\Phi_\s).
\end{equation}

\begin{prop}\label{max} If $w\in \Wab$ is maximal, then there is $\a\in \Pia$ and $\mu\in \mathcal M_\s$ such that $w(\a)=\mu$.
\end{prop}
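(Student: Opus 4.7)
The plan is to combine the alcove-theoretic description of $D_\s$ (Proposition \ref{Dsigma} together with \cite[Theorem 3.2]{IMRN}) with the $\Wa$-invariance of the null root $\d$. The first step is an easy observation: every element of $\Phi_\s\setminus\Pi_0$ has $\s$-height at least $2$, since $ht_\s(k\d-\th_\S)=k\cdot(2/k)-0=2$ and $ht_\s(\a+k\d)=1+2=3$, whereas every $\a_i\in\Pia$ has $ht_\s(\a_i)=s_i\in\{0,1\}$. Thus $\Pia\cap\Phi_\s=\Pi_0$ and $\mathcal M_\s=\Phi_\s\setminus\Pi_0$.

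The second step translates maximality into a geometric condition on the walls of $wC_1$. Fix $\a\in\Pia$ with $w(\a)>0$; then $N(ws_\a)=N(w)\cup\{w(\a)\}$, so $ws_\a\in\Wab$ if and only if $w(\a)\in\Da_1$. Maximality of $w$ therefore forces $ws_\a\notin\Wab$ and hence $ws_\a C_1\not\subset D_\s$, while $wC_1\subset D_\s$. The hyperplane $\{y\in(\h_0)_\R\mid w(\a)(d+y)=0\}$ separating the two alcoves must then lie inside $\partial D_\s=\bigcup_{\mu\in\Phi_\s}\{y\mid \mu(d+y)=0\}$. Since a hyperplane contained in a finite union of hyperplanes must coincide with one of them, I obtain $w(\a)=\mu\in\Phi_\s$ (positivity of both sides together with a length comparison pins down the scalar, which is the only point requiring a bit of care in the twisted types).

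The final step is to exclude that every such $\mu$ already lies in $\Pi_0$. Suppose, for contradiction, that $w(\a)\in\Pi_0$ whenever $\a\in\Pia$ and $w(\a)>0$. Using $w\d=\d$ and $ht_\s(\gamma)=\gamma(d)$ (Remark \ref{hts}), one computes
$$\frac{2}{k}=\d(d)=(w\d)(d)=\sum_{i=0}^n a_i\,ht_\s(w\a_i).$$
Each summand with $w(\a_i)>0$ vanishes, since $w\a_i\in\Pi_0$ has $\s$-height $0$; each summand with $w(\a_i)<0$ is non-positive, since $ht_\s\ge 0$ on $\Dap$ and hence $\le 0$ on $-\Dap$. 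This forces $2/k\le 0$, a contradiction, so some $\a\in\Pia$ with $w(\a)>0$ must satisfy $w(\a)\in\Phi_\s\setminus\Pi_0=\mathcal M_\s$. The main delicate point is the hyperplane identification in the second step, which rests on the explicit description of $D_\s$ in Proposition \ref{Dsigma} and the alcove bijection of \cite[Theorem 3.2]{IMRN}.
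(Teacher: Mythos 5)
Your overall strategy coincides with the paper's: both proofs use the polytope description of $D_\s$ from Proposition \ref{Dsigma} to show that $w(\a)\in\Phi_\s$ for every $\a\in\Pia$ with $w(\a)>0$, and then rule out the possibility that all such images lie in $\Pi_0$. Your preliminary observation that $\Pia\cap\Phi_\s=\Pi_0$ (via the $\s$-heights $2$ and $3$ of the non-simple elements of $\Phi_\s$) is correct and is used implicitly in the paper as well. Two remarks on the details. First, in your second step the paper avoids the hyperplane-identification argument altogether: for $\mu\in\Dap$ the condition $ws_\a(C_1)\not\subset H^+_\mu$ is \emph{equivalent} to $\mu\in N(ws_\a)=N(w)\cup\{w(\a)\}$, and $\mu\notin N(w)$ because $w(C_1)\subset D_\s\subset H^+_\mu$; hence $\mu=w(\a)$ as an equality of roots, with no scalar left to pin down. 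Your version does leave a genuine loose end exactly in type $A_{2n}^{(2)}$ (one of the two types where $\Pia_0^*\ne\Pia_0$), where proportional real roots occur and the promised ``length comparison'' would have to be carried out; I would replace the geometric identification of walls by the inversion-set statement. Second, your final step is genuinely different from the paper's and arguably slicker: the paper derives the contradiction from finiteness of $N(w)$ (if $w(\Dap)\cap\Dap\subseteq\D_0$, then $w(\Dap)\setminus\Dap=-N(w)$ would have to be infinite), whereas you evaluate the $\Wa$-invariant root $\d$ against $d$ and use $ht_\s\geq 0$ on $\Dap$ to force $2/k\leq 0$. Both are correct; yours trades a counting argument for a one-line computation.
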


\begin{proof} By Proposition \ref{Dsigma}, we have that, if $\a\in\Pia,\,w(\a)\in\Dap$, then $ws_\a(C_1)\not\subset D_\s$, hence there exists $\mu\in \Phi_\s$ such that $ws_\a(C_1)\not \subset H^+_\mu$. It follows that $\mu\in N(ws_\a)$. Since $N(ws_\a)=N(w)\cup \{w(\a)\}$, we see that $w(\a)=\mu$. We need therefore to prove that there is a simple root $\a$ such that $w(\a)\in \Dap$ and $w(\a)\not \in \Pi_0$. 

Assume on the contrary that, if $\a\in \Pia$ and $w(\a)\in \Dap$, then $w(\a)\in \Pi_0$. 
Then, for all $\alpha\in \widehat \Pi$,
$ht_\sigma(w(\alpha))\leq 0$ and, hence, for all
$\beta\in\widehat \Delta^+$, we have that $ht_\sigma(w(\beta))\leq 0$.  It
follows that, for all $\beta\in\widehat \Delta^+$, if
$w(\beta)$ is positive, then $w(\beta)\in \Delta_0$. Equivalently,
 $w(\widehat\Delta^+)\cap \widehat\Delta^+\subseteq
\Delta_0$.  Hence, in particular, $w(\widehat\Delta^+)\setminus
\widehat\Delta^+$ is infinite, but this is impossible, since
$w(\widehat\Delta^+)\setminus \widehat\Delta^+=-N(w)$.
\end{proof}

\section{The poset $ \mathcal I_{\a,\mu}$ and its minimal elements}
Given $\a\in\Pia,\,\mu\in\mathcal M_\s$, set
$$ \mathcal I_{\a,\mu}=\{w\in \Wab\mid w(\a)=\mu\}.$$
In this Section we find necessary and sufficient conditions for the poset $\mathcal I_{\a,\mu}$ to be nonempty, and in such a case we show that it has minimum. 
\par

We consider first the case $\mu=k\delta-\theta_\S$, with $\S|\Pia_0$. 

\begin{defi}\label{Asigma}
Let $\S|\Pi_0$, and consider the subgraph of $\Pia$ with  $\{\a\in\Pia\mid (\a,\theta_\S)\leq 0\}$ as set of vertices.  We call $A(\S)$ the union 
of the connected components of this subgraph which contain at least one root of $\Pi_1$. Moreover, we set 
$$\G(\S)=A(\S)\cap \S.$$ 
\end{defi}

\begin{rem}\label{ascon}If $|\Pi_1|=1$ then, obviously, $A(\S)$ is connected. If $|\Pi_1|=2$ then a brief inspection shows that there is only one case when $A(\S)$ is disconnected, namely when $\Pia$ is of type $C_n^{(1)}$. Note that in such a case $\Pi_0$ is connected and $\theta_{\Pi_0}$ is a short root. 
\end{rem}

\begin{example}\label{esempio} {\bf (1).} Let $\widehat L(\g,\s)$ be of type $B_n^{(1)}$  $(n\ge 5)$ and $\Pi_1=\{\a_p\},\,4\leq p\leq n-1$. Then $\Pi_0$ has two components, say $\S_1$, of type $D_p$, with simple roots $\{\a_i,\mid 0\leq i\leq p-1\}$, and 
$\S_2$ of type $B_{n-p}$ and  simple roots $\{\a_i,\mid p+1\leq i\leq n\}$. 
We have  $A(\S_1)=\{\a_{p-1},\dots,\a_n\},\ \G(\S_1)=\{\a_{p-1}\}$, and 
$A(\S_2)=\{\a_{0},\dots,$ $\a_{p+1}\}, \ \G(\S_2)=\{\a_{p+1}\}$. We illustrate  this example in the case $n=7,p=4$.

$$
\begin{tikzpicture}[scale=1]
\pgfputat{\pgfxy(-0.5,0)}{\pgfbox[center,center]{$B_7^{(1)}$}}
\pgfputat{\pgfxy(1,-0.5)}{\pgfbox[center,center]{$\a_1$}}
\pgfputat{\pgfxy(2,-0.5)}{\pgfbox[center,center]{$\a_2$}}
\pgfputat{\pgfxy(3,-0.5)}{\pgfbox[center,center]{$\a_3$}}
\pgfputat{\pgfxy(4,-0.5)}{\pgfbox[center,center]{$\a_4$}}
\pgfputat{\pgfxy(5,-0.5)}{\pgfbox[center,center]{$\a_5$}}
\pgfputat{\pgfxy(6,-0.5)}{\pgfbox[center,center]{$\a_6$}}
\pgfputat{\pgfxy(7,-0.5)}{\pgfbox[center,center]{$\a_7$}}
\pgfputat{\pgfxy(2.5,1)}{\pgfbox[center,center]{$\a_0$}}
\pgfputat{\pgfxy(2.5,0.35)}{\pgfbox[center,center]{$\Sigma_1$}}
\pgfputat{\pgfxy(6,0.5)}{\pgfbox[center,center]{$\Sigma_2$}}
\draw{(1,0)--(2,0)--(3,0)--(4,0)--(5,0)--(6,0)};
\draw{(2,0)--(2,1)};
\draw{(6,-0.05)--(7,-0.05)};
\draw{(6,0.05)--(7,0.05)};
\draw{(6.4,.15)--(6.55,0)--(6.4,-.15)};
\draw[fill=black]{(4,0) circle(3pt)};
\draw[fill=white]{(2,1) circle(3pt)};
\foreach \x in {1,2,3,5,6,7}
\draw[fill=white]{(\x,0) circle(3pt)};
\pgfputat{\pgfxy(-0.5,-2)}{\pgfbox[center,center]{$A(\Sigma_1)$}}
\pgfputat{\pgfxy(3,-2.7)}{\pgfbox[center,center]{$\Gamma(\Sigma_1)$}}
\pgfrect[stroke]{\pgfxy(2.75,-2.25)}{\pgfpoint{15pt}{15pt}}
\draw{(3,-2)--(4,-2)--(5,-2)--(6,-2)};
\draw{(6,-2.05)--(7,-2.05)};
\draw{(6,-1.95)--(7,-1.95)};
\draw{(6.4,-1.85)--(6.55,-2)--(6.4,-2.15)};
\draw[fill=black]{(4,-2) circle(3pt)};
\foreach \x in {3,5,6,7}
\draw[fill=white]{(\x,-2) circle(3pt)};
\pgfputat{\pgfxy(-0.5,-4.2)}{\pgfbox[center,center]{$A(\Sigma_2)$}}
\draw{(1,-4.2)--(2,-4.2)--(3,-4.2)--(4,-4.2)--(5,-4.2)};
\draw{(2,-4.2)--(2,-3.2)};
\draw[fill=black]{(4,-4.2) circle(3pt)};
\draw[fill=white]{(2,-3.2) circle(3pt)};
\foreach \x in {1,2,3,5}
\draw[fill=white]{(\x,-4.2) circle(3pt)};
\pgfputat{\pgfxy(5,-4.9)}{\pgfbox[center,center]{$\Gamma(\Sigma_2)$}}
\pgfrect[stroke]{\pgfxy(4.75,-4.45)}{\pgfpoint{15pt}{15pt}}
\end{tikzpicture}
$$
\vskip15pt
\noindent{\bf (2).}
Let $\widehat L(\g,\s)$ be of type $E_6^{(1)}$ and $\Pi_1=\{\a_6\}$. Then $\Pi_0$ has two components: $\S_1$, of type $A_5$, with simple roots $\{\a_1,\ldots,\a_5\}$, and  
$\S_2=\{\a_0\}$, of type $A_1$. We have  $A(\S_1)=\{\a_2,\a_3,\a_4,\a_6,\a_0\},\ \G(\S_1)=\{\a_2,\a_3,\a_4\}$ and $
A(\S_2)=\Pia\setminus\{\a_0\}, \ \G(\S_2)=\emptyset$.
$$
\begin{tikzpicture}[scale=1]
\pgfputat{\pgfxy(-1,0)}{\pgfbox[center,center]{$E_6^{(1)}$}}
\pgfputat{\pgfxy(-1,-3)}{\pgfbox[center,center]{$\Gamma(\Sigma_1)$}}
\pgfputat{\pgfxy(-1,-2)}{\pgfbox[center,center]{$A(\Sigma_1)$}}
\pgfputat{\pgfxy(2.5,0.35)}{\pgfbox[center,center]{$\Sigma_1$}}
\pgfputat{\pgfxy(2.5,2)}{\pgfbox[center,center]{$\Sigma_2$}}
\pgfputat{\pgfxy(1.5,2)}{\pgfbox[center,center]{$\a_0$}}
\pgfputat{\pgfxy(1.5,1)}{\pgfbox[center,center]{$\a_6$}}
\pgfputat{\pgfxy(0,-0.5)}{\pgfbox[center,center]{$\a_1$}}
\pgfputat{\pgfxy(1,-0.5)}{\pgfbox[center,center]{$\a_2$}}
\pgfputat{\pgfxy(2,-0.5)}{\pgfbox[center,center]{$\a_3$}}
\pgfputat{\pgfxy(3,-0.5)}{\pgfbox[center,center]{$\a_4$}}
\pgfputat{\pgfxy(4,-0.5)}{\pgfbox[center,center]{$\a_5$}}
\draw{(2,1)--(2,2)};
\draw[fill=white]{(2,2) circle(3pt)};
{\foreach \x in {0, 1, 2,3}
\draw{(\x,0)--(\x+1,0)};
\draw{(2,0)--(2,1)};
\foreach \x in {0, 1, 2,3,4}
\draw[fill=white]{(\x,0) circle(3pt)};
\draw[fill=black]{(2,1) circle(3pt)};}
\draw{(2,-2)--(2,-1)};
\draw[fill=white]{(2,-1) circle(3pt)};
{\foreach \x in {1, 2}
\draw{(\x,-3)--(\x+1,-3)};
\draw{(2,-3)--(2,-2)};
\foreach \x in {1, 2,3}
\draw[fill=white]{(\x,-3) circle(3pt)};
\draw[fill=black]{(2,-2) circle(3pt)};}
\pgfrect[stroke]{\pgfxy(0.75,-3.25)}{\pgfpoint{70pt}{15pt}}
\end{tikzpicture}
$$
\noindent{\bf (3).}
 Let $\widehat L(\g,\s)$ be of type $A_n^{(1)}$, $(n>2)$, and $\Pi_1=\{\a_0,\a_p\},\,1< p< n$. Then $\Pi_0$ has two components:
$\S_1$, of type $A_{p-1}$,  with simple roots $\{\a_i,\mid 1\leq i\leq p-1\}$, and 
$\S_2$ of type $A_{n-p}$ and  simple roots $\{\a_i,\mid p+1\leq i\leq n\}$. 
We have  and  $A(\S_1)=\S_2\cup \Pi_1$, $A(\S_2)=\S_1\cup\Pi_1$, and 
$\G(\S_i)=\emptyset$ for $i=1,2$. In the following picture we display the case $n=6,p=3$.
$$
\begin{tikzpicture}[scale=1]
\pgfputat{\pgfxy(-1,0)}{\pgfbox[center,center]{$A_6^{(1)}$}}
\pgfputat{\pgfxy(1,0.8)}{\pgfbox[center,center]{$\Sigma_1$}}
\pgfputat{\pgfxy(4,0.8)}{\pgfbox[center,center]{$\Sigma_2$}}
\pgfputat{\pgfxy(-2,-2)}{\pgfbox[center,center]{$A(\Sigma_1)$}}
\pgfputat{\pgfxy(7,-2)}{\pgfbox[center,center]{$A(\Sigma_2)$}}
\pgfputat{\pgfxy(0,-0.5)}{\pgfbox[center,center]{$\a_1$}}
\pgfputat{\pgfxy(1,-0.5)}{\pgfbox[center,center]{$\a_2$}}
\pgfputat{\pgfxy(2,-0.5)}{\pgfbox[center,center]{$\a_3$}}
\pgfputat{\pgfxy(3,-0.5)}{\pgfbox[center,center]{$\a_4$}}
\pgfputat{\pgfxy(4,-0.5)}{\pgfbox[center,center]{$\a_5$}}
\pgfputat{\pgfxy(5,-0.5)}{\pgfbox[center,center]{$\a_6$}}
\pgfputat{\pgfxy(2.5,1.3)}{\pgfbox[center,center]{$\a_0$}}
{\foreach \x in {0, 1, 2,3,4}
\draw{(\x,0)--(\x+1,0)};
\draw{(0,0)--(2.5,1)};
\draw{(2.5,1)--(5,0)};
\foreach \x in {0, 1,3,4,5}
\draw[fill=white]{(\x,0) circle(3pt)};
\draw[fill=black]{(2.5,1) circle(3pt)};
\draw[fill=black]{(2,0) circle(3pt)};
}
{\foreach \x in {0,1,}
\draw{(\x-1,-2)--(\x,-2)};
\draw{(-1,-2)--(0.5,-1)};
\foreach \x in {0, 1,}
\draw[fill=white]{(\x-1,-2) circle(3pt)};
\draw[fill=black]{(0.5,-1) circle(3pt)};
\draw[fill=black]{(1,-2) circle(3pt)};
}
{\foreach \x in {2,3,4}
\draw{(\x+1,-2)--(\x+2,-2)};
\draw{(3.5,-1)--(6,-2)};
\foreach \x in {3,4,5}
\draw[fill=white]{(\x+1,-2) circle(3pt)};
\draw[fill=black]{(3.5,-1) circle(3pt)};
\draw[fill=black]{(3,-2) circle(3pt)};
}
\end{tikzpicture}
$$

\end{example}

\begin{rem}\label{rsigma}
Assume that $\S|\Pi_0$, $k\d-\theta_\S\in\mathcal M_\s$, $\a\in \Pi_1$, and set 
$$r_\S=-(\a,\theta_\S^\vee).$$ 
By Remark \ref{phisigma} (2), $r_\S$ is independent from the choice of $\a\in \Pi_1$.
Moreover, we see that $r_\S=1$ if and only if $\theta_\Sigma$ is long and non complex while, in the remaining cases, since we are assuming that  $k\d-\theta_\S\in  \Pia^*_0$, we have that $r_\S=2$. If $r_\S=2$, then, for $\a\in\Pi_1$, either $\|\a\|=2\|\theta_\S\|$, or $\ov \a=-\ov \theta_\S$. The latter instance occurs in the adjoint case, so that $k=2$ and  $\theta_\S$ is long and complex. In the first case, 
$\theta_\S$ is a short root,  and $k$ may be 1 or 2. In fact, $k=2$ and $\theta_\S$ is complex, except in the following two cases:  $\g$ is of type $B_n$, $\Pi_1=\{\a_{n-1}\}$ and $\theta_\S=\a_n$ or  $\g$ is of type $C_n$, $\Pi_1=\{\a_0,\a_n\}$, $\S=\{\a_1,\ldots,\a_{n-1}\}$. 
\end{rem}

From now on we will distinguish roots in two types, according to the  following  definition.

\begin{defi}We say that $\a\in\Dap_{re}$ is of type 1 if it  is long and non complex and of type 2 otherwise.
\end{defi}

By the above remark, if $k\d-\theta_\S\in\mathcal M_\s$, its type is $r_\S$.

\begin{lemma}\label{fund}
Assume $\S|\Pi_0$ and $k\d-\theta_\S\in\mathcal M_\s$.
If $\frac{k}{r_\S}\in\ganz$, then $A(\S)$ is connected,  $\frac{k}{r_\S}\d-\theta_\S$ is a root, and\begin{equation}\label{supp}supp\(\frac{k}{r_\S}\d-\theta_\S\)\subseteq A(\S).\end{equation}
\end{lemma}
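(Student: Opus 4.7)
The plan is to establish the three assertions in turn. For the connectedness of $A(\S)$, I would appeal directly to Remark~\ref{ascon}: the only case in which $A(\S)$ could be disconnected is when $|\Pi_1|=2$ and $\widehat L(\g,\s)$ is of type $C_n^{(1)}$. In that case $\theta_{\Pi_0}$ is short, which forces $r_\S=2$; but $C_n^{(1)}$ is untwisted so $k=1$, and hence $k/r_\S=1/2\notin\ganz$, contradicting the hypothesis. So $A(\S)$ is connected.

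To show $\mu:=\tfrac{k}{r_\S}\d-\theta_\S\in\widehat\Delta$, I would split on $r_\S$. If $r_\S=1$ then $\mu=k\d-\theta_\S$, which lies in $\mathcal M_\s\subseteq\widehat\Delta^+$ by the hypothesis $k\d-\theta_\S\in\mathcal M_\s$. If $r_\S=2$ then necessarily $k=2$ and $\mu=\d-\theta_\S$; by Remark~\ref{rsigma}, $\theta_\S$ is either long complex (the adjoint case, where in fact $\d=\theta_\S+\a$ for the unique $\a\in\Pi_1$, so $\mu=\a\in\widehat\Pi$) or short complex (a genuine twisted case), and in the latter the standard description of real roots in the twisted affine Kac--Moody algebra yields $\d-\theta_\S\in\widehat\Delta$.

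For the support assertion, I would expand $\mu=\sum_i c_i\alpha_i$. For $\alpha_i\notin\S$ one has $c_i=\tfrac{k}{r_\S}a_i>0$, and $\alpha_i\in A(\S)$ because $\Pi_1\subseteq A(\S)$ by definition, while each component of $\Pi_0\setminus\S$ is attached to $\Pi_1$ in $\widehat\Pi$ by edges that survive in the subgraph defining $A(\S)$. For $\alpha_i\in\S$ one has $c_i=\tfrac{k}{r_\S}a_i-c_{\alpha_i}(\theta_\S)$, and it remains to show $c_i=0$ whenever $\alpha_i\notin A(\S)$. Comparing the $\widehat\Pi$ null relation
$\sum_{j\in\S}a_j(\alpha_i,\alpha_j^\vee)+\sum_{j\in\Pi_1}a_j(\alpha_i,\alpha_j^\vee)=0$
(using $(\alpha_j,\alpha_i^\vee)=0$ for $\alpha_j\in\Pi_0\setminus\S$) with the null relation for the affine extension of $\S$,
$\sum_{j\in\S}c_{\alpha_j}(\theta_\S)(\alpha_i,\alpha_j^\vee)=(\alpha_i,\theta_\S^\vee),$
and invoking the identities $(\alpha_j,\theta_\S^\vee)=-r_\S$ for $\alpha_j\in\Pi_1$ together with $\sum_{\alpha_j\in\Pi_1}a_j=2/k$, one obtains a linear system for the $c_i$ that forces them to vanish on $\S\setminus A(\S)$. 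The main obstacle is precisely this final step: a case-by-case verification across the admissible affine types is elementary but inelegant, while the uniform argument via the two null relations requires carefully solving the resulting system, whose structure reflects the way $A(\S)$ is cut out of $\S$ by the active nodes.
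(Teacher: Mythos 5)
Your first two steps are sound. Deriving the connectedness of $A(\S)$ from Remark~\ref{ascon} — ruling out the exceptional $C_n^{(1)}$ case because there $\theta_{\Pi_0}$ is short, so $r_\S=2$ while $k=1$ and $k/r_\S\notin\ganz$ — is a legitimate alternative to the paper, which obtains connectedness only implicitly from the support statement. Your case split $r_\S=1$ versus $r_\S=2$ (forcing $k=2$) for showing $\frac{k}{r_\S}\d-\theta_\S\in\Da$ agrees in substance with the paper's argument.

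The support assertion is where there is a genuine gap, and you have in fact set yourself a harder problem than necessary. You aim to prove $c_{\a_i}\bigl(\frac{k}{r_\S}\d-\theta_\S\bigr)=0$ for \emph{every} $\a_i\in\S\setminus A(\S)$. That set can be strictly larger than $B_\S:=\{\be\in\S\mid(\be,\theta_\S)>0\}$: in type $B_7^{(1)}$ with $\Pi_1=\{\a_4\}$ and $\S_1=\{\a_0,\a_1,\a_2,\a_3\}$ of type $D_4$, one has $B_{\S_1}=\{\a_2\}$ but $\S_1\setminus A(\S_1)=\{\a_0,\a_1,\a_2\}$. For the extra nodes, which are orthogonal to $\theta_\S$, your two null relations only yield $\bigl(\frac{k}{r_\S}\d-\theta_\S,\a_i^\vee\bigr)=0$, a harmonicity condition that does not by itself force the coefficient to vanish; and you explicitly leave the resolution of the resulting system as ``the main obstacle,'' so the argument is not completed. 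The missing idea is that the support of a root is connected. Since every $\a_i\notin\S$ has coefficient $\frac{k}{r_\S}a_i>0$, the support contains $\Pi_1$; hence it suffices to prove vanishing only on $B_\S$, for then the support is a connected subset of the subgraph on $\{\a\in\Pia\mid(\a,\theta_\S)\le0\}$ meeting $\Pi_1$, and so lies in $A(\S)$ by definition (this also gives the connectedness of $A(\S)$ for free). The vanishing on $B_\S$ is then a short uniform computation: from $ht_\s(k\d)=2$ and $(\d,\theta_\S)=0$ one gets $\frac{2r_\S}{k}=\sum_{\be\in B_\S}c_\be(\d)(\be,\theta_\S^\vee)$, which, compared with $\sum_{\be\in B_\S}c_\be(\theta_\S)(\be,\theta_\S^\vee)=2$ and using the integrality of $k/r_\S$, forces $\frac{k}{r_\S}c_\be(\d)=c_\be(\theta_\S)$ for each $\be\in B_\S$. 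You should restructure your final step along these lines rather than attempting to control the coefficients on all of $\S\setminus A(\S)$ directly.
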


\begin{proof}
Note that $\frac{k}{r_\S}\in\ganz$ if and only if  $r_\S=1$ or $k=r_\S=2$, in any case $\frac{k}{r_\S}\in\{1,2\}$. If $\frac{k}{r_\S}=2$, then $\frac{k}{r_\S}\d-\theta_\S\in\D$ and, if $\frac{k}{r_\S}=1$ then, either  $k=1$ or $k=2$ and $\theta_\S$ is complex. In both cases, $\frac{k}{r_\S}\d-\theta_\S\in\D$. 
\par
We now prove that $supp(\frac{k}{r_\S}\d-\theta_\S)\subset A(\S)$. Note that $\Pi_1\subset supp(\frac{k}{r_\S}\d-\theta_\S)$, hence we need only to prove that $\a\notin supp(\frac{k}{r_\S}\d-\theta_\S)$ for any $\a\in\S$ such that $(\a,\theta_\S)>0$. We next show that, for such an $\a$, we have $c_\a(\frac{k}{r_\S}\d-\theta_\S)=0$.
We have:
$$
2\frac{r_\S}{k}=-\sum_{\be\in\Pi_1}c_\be(\d)(\be,\theta_\S^\vee)=\sum_{\stackrel{\beta\in\S}{(\beta,\theta_\S)>0}}c_\beta(\d)(\beta,\theta_\S^\vee).
$$
The first equality follows by the definition of $r_\S$, and the second by the relation $(\d,\theta_\S)=0$.
If there is only one root $\a\in\S$ such that $(\a,\theta_\S)>0$, we obtain that 
$$\frac{k}{r_\S}c_\a(\d)(\a,\theta_\S^\vee)=2=c_\a(\theta_\S)(\a,\theta_\S^\vee), 
$$
hence  $c_\a(\frac{k}{r_\S}\d-\theta_\S)=0$.
If there is more than one  root in $\S$ not orthogonal to $\theta_\S$ then $\sum\limits_{\stackrel{\a\in\S}{(\a,\theta_\S)>0}}c_\a(\theta_\S)(\a,\theta_\S^\vee)=2$, hence $(\a,\theta_\S)=c_\a(\theta_\S)=1$ for all $\a\in\S$ not orthogonal to $\theta_\S$. 
\par
Since  $\frac{k}{r_\S}\sum\limits_{\stackrel{\a\in\S}{(\a,\theta_\S)>0}}c_\a(\d)=2$, $\tfrac{k}{r_\S}\in\ganz$, and $c_\a(\d)>0$ for all $\a\in \Pia$, we obtain $\frac{k}{r_\S}c_\a(\d)=1$ and again we have   $c_\a(\frac{k}{r_\S}\d-\theta_\S)=0$, as desired.
\end{proof}
\vskip.5pt
Note that, if $\theta_\S$ is of type 1 or $k=2$, then $\frac{k}{r_\S}\in\ganz$. In particular $A(\S)$ is connected.
\begin{prop}\label{hr}   
Assume $\S|\Pi_0$ and $k\d-\theta_\S\in\mathcal M_\s$.
If  $\theta_\S$ is of type 1, then $k\d-\theta_\S$ is the highest root of $\D(A(\S))$.
If $k=2$ and $\theta_\S$ is of type 2, then $\d-\theta_\S$ is either the highest root of $\D(A(\S))$, or its highest short root.
\end{prop}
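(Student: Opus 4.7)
The plan is to set $\gamma:=\tfrac{k}{r_\S}\d-\theta_\S$ (so $\gamma=k\d-\theta_\S$ in case (i) and $\gamma=\d-\theta_\S$ in case (ii)) and to show that $\gamma$ is a \emph{dominant} positive root of the irreducible root system $\D(A(\S))$. The conclusion then follows from the characterization, recalled in the Conventions subsection, of the dominant positive roots of an irreducible finite root system: they are exactly the highest root and the highest short root.

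In both cases $\tfrac{k}{r_\S}\in\ganz$, so Lemma \ref{fund} applies and yields that $\gamma$ is a root with $\supp(\gamma)\subseteq A(\S)$, while $A(\S)$ is connected, so $\D(A(\S))$ is irreducible. Positivity of $\gamma$ is immediate by picking any $\a\in\Pi_1\subseteq A(\S)$ and noting that $c_\a(\gamma)=\tfrac{k}{r_\S}\,a_\a>0$.

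For dominance, take any $\be\in\Dp(A(\S))$. Since $\d$ is isotropic we have $(\gamma,\be)=-(\theta_\S,\be)$, and by the very definition of $A(\S)$ every simple root $\a\in A(\S)$ satisfies $(\a,\theta_\S)\le 0$; writing $\be$ as a nonnegative integer combination of these simple roots yields $(\theta_\S,\be)\le 0$ and hence $(\gamma,\be)\ge 0$.

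It remains to identify $\gamma$. In case (i), $\theta_\S$ is of type 1, i.e.\ long and noncomplex in $\Da_{re}$; since $\d$ is isotropic, $\gamma=k\d-\theta_\S$ has the same length as $\theta_\S$, which is the maximal length attained by any real root of $\al(\g,\s)$, so in particular $\gamma$ is long within $\D(A(\S))$. The characterization of the highest root recalled in Section 2.3 (a long positive root pairing non-negatively with every positive root must equal $\theta_R$) forces $\gamma=\theta_{A(\S)}$. In case (ii), only dominance is available, so $\gamma=\d-\theta_\S$ must be either the highest root or the highest short root of $\D(A(\S))$, which is exactly the claimed alternative. The one delicate point is the assertion in case (i) that $\gamma$ is long \emph{inside the subsystem} $\D(A(\S))$, since $A(\S)$ may contain roots of different lengths; this is guaranteed because ``type 1'' captures absolute maximality of length among all real roots of $\al(\g,\s)$, and not just within $\D_0$.
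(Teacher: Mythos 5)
Your proof is correct and follows essentially the same route as the paper's: both place $\gamma=\tfrac{k}{r_\S}\d-\theta_\S$ inside $\D(A(\S))$ via Lemma \ref{fund}, deduce dominance from the defining property $(\a,\theta_\S)\le 0$ for $\a\in A(\S)$, and then separate the two cases by the length of $\theta_\S$. You merely spell out the positivity and dominance steps (and the remark that longness in $\Da$ persists in the subsystem) which the paper leaves implicit.
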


\begin{proof} Our assumptions imply in any case that $\frac{k}{r_\S}\in\ganz$. By  \eqref{supp} we have  that  $\frac{k}{r_\S}\d-\theta_\S\in \D(A(\S))$. By the definition of $A(\S)$, $\frac{k}{r_\S}\d-\theta_\S$ is a dominant root in $ \D(A(\S))$, therefore, since $\D(A(\S))$ is a finite root system, we obtain that it is either the highest root of  $ \D(A(\S))$ or its highest short root.  If $\theta_\S$ is of type 1, then it is a long root, so, since $r_\S=1$, $k\d-\theta_\S$ is the highest root of $\D(A(\S))$.
If $\theta_\S$ is of type 2, then $r_\S=2$, hence $\frac{k}{r_\S}=1$. In this case, 
$\theta_\S$ may be short or long, and $\d-\theta_\S$ is the highest short or long root of $\D(A(\S))$, according to its length. 
\end{proof}

\begin{lemma}\label{esse} Assume $\S|\Pi_0$,  $k\delta-\theta_\S\in \mathcal M_\s$ and  $\th_\Sigma$  of type 2. Let $s$ be the element of minimal length in $\Wa$ such that $s(\th_\Sigma)=k\d-\th_\Sigma$. Then $s\in
W(A(\Sigma))$ and is an involution. Moreover,  
$$
N(s)=\{\beta\in \Dap_1\mid (\beta, \theta_\S^\vee)=-2\}, 
$$
in particular, $s\in \Wab$.
\end{lemma}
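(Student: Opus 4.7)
The plan is to prove the lemma by identifying the claimed inversion set and deducing all four assertions from its structure. Set
\[
N := \{\be \in \Dap_1 \mid (\be, \th_\S^\vee) = -2\}.
\]
I would first show that $N$ is biconvex, so by Subsection~\ref{is} there is a unique $w \in \Wa$ with $N(w) = N$, and since $N \subseteq \Dap_1$ we have $w \in \Wab$. Then I would locate $w$ inside $W(A(\S))$, verify $w(\th_\S) = k\d - \th_\S$ together with $w^2 = 1$, and derive minimality to conclude $s = w$.

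The closure condition for $N$ is vacuous: a sum $\be_1 + \be_2$ of two elements of $N$ would pair with $\th_\S^\vee$ to give $-4$, but imaginary roots pair to $0$ with $\th_\S^\vee$, and real roots satisfy $|(\gamma, \th_\S^\vee)| \le 2$ under the hypothesis $k\d - \th_\S \in \mathcal M_\s$ (via the condition $a \le 2\|\th_\S\|^2$ in \eqref{pi0*} together with Remark~\ref{phisigma}(1), which rule out the $G_2^{(1)}$ and $A_2^{(2)}$ cases where the value $3$ is attained). For the co-closure condition, if $\gamma_1 + \gamma_2 \in N$ with $\gamma_i \in \Dap \setminus N$, additivity of $\sigma$-height forces $\gamma_1 \in \Dp_0$ and $\gamma_2 \in \Dap_1$; since $(\a, \th_\S^\vee) \ge 0$ for $\a \in \S$ (as $\th_\S$ is the highest root of $\S$) and $(\a, \th_\S^\vee) = 0$ for $\a$ in any other component of $\Pi_0$, we get $(\gamma_1, \th_\S^\vee) \ge 0$, hence $(\gamma_2, \th_\S^\vee) \le -2$, and the bound above forces equality, contradicting $\gamma_2 \notin N$. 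The same nonnegativity yields $supp(\be) \subseteq A(\S)$ for every $\be \in N$: writing $\be = \sum c_\a \a$ with $c_\a \ge 0$, the identity $-2 = \sum c_\a (\a, \th_\S^\vee)$ forces each $\a \in supp(\be)$ to satisfy $(\a, \th_\S) \le 0$; since $ht_\s(\be) = 1$, the support meets $\Pi_1$ and is connected, so lies in $A(\S)$. Hence $w \in W(A(\S))$.

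For the action on $\th_\S$, involutivity, and minimality: when $k/r_\S \in \ganz$ (so $k = 2$, since $\th_\S$ is of type 2), Proposition~\ref{hr} realizes $\d - \th_\S$ as the highest long or highest short root of $\D(A(\S))$; the plan is to exhibit $w$ as a product of pairwise-orthogonal reflections obtained by pairing each $\be \in N$ with the unique $\be' \in N$ such that $\be + \be' = \d - \th_\S$. This description makes involutivity immediate (a product of commuting involutions), and a telescoping computation yields $w(\th_\S) = k\d - \th_\S$. In the exceptional case $k = 1$, $r_\S = 2$, Remark~\ref{rsigma} reduces to $\g$ of type $B_n$ with $\Pi_1 = \{\a_{n-1}\}$ or $\g$ of type $C_n$ with $\Pi_1 = \{\a_0, \a_n\}$; here, the pairing in $N$ has $\be + \be' = \d - 2\th_\S$, and the same argument carries through. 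Finally, minimality follows from Subsection~\ref{orbit}: every $\be \in N(w) = N$ satisfies $(\be, \th_\S^\vee) = -2 \ne 0$ and so is not orthogonal to $k\d - \th_\S$, so by \eqref{mcr+} $w$ is the minimal-length element sending $\th_\S$ to $k\d - \th_\S$, i.e.\ $s = w$.

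The main obstacle I anticipate is constructing the explicit orthogonal pairing of $N$ and verifying the telescoping computation, especially in the exceptional $k = 1$, $r_\S = 2$ case where Proposition~\ref{hr} is not available and one has to work directly with the two specific Lie algebra types identified in Remark~\ref{rsigma}.
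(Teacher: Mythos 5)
Your overall strategy is a legitimate reversal of the paper's: you define $N=\{\be\in\Dap_1\mid(\be,\th_\S^\vee)=-2\}$, prove it is biconvex, and try to show that the unique $w$ with $N(w)=N$ has all the properties required of $s$, whereas the paper first identifies $s$ explicitly (as the single reflection $s_{\d-\th_\S}$ when $k=2$, and as an explicit product of two commuting reflections in each of the two $k=1$ configurations of Remark \ref{rsigma}) and only then computes $N(s)$. Several of your steps are sound: the biconvexity argument (including the bound $|(\gamma,\th_\S^\vee)|\le 2$ forced by $k\d-\th_\S\in\Pia_0^*$), the minimality criterion via \eqref{mcr+} (no element of $N(w)=N$ is orthogonal to $k\d-\th_\S$, which is exactly the condition $w^{-1}(\be)>0$ for positive $\be\perp k\d-\th_\S$), and the conclusion $w\in W(A(\S))$ --- though for the latter the phrase ``the identity $-2=\sum c_\a(\be)(\a,\th_\S^\vee)$ forces each $\a\in supp(\be)$ to satisfy $(\a,\th_\S)\le 0$'' is a non sequitur as written; you need to observe that the $\Pi_1$-part of the sum already equals $-r_\S=-2$ because $ht_\s(\be)=1$, so the $\Pi_0$-part is a sum of nonnegative terms equal to $0$, whence each vanishes.

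The genuine gap is the step on which everything else hinges: the verification that $w(\th_\S)=k\d-\th_\S$ and $w^2=1$. The pairing you propose does not exist: if $\be,\be'\in N$ then $ht_\s(\be+\be')=2$ while $ht_\s(\d-\th_\S)=1$ for $k=2$, and $(\be+\be',\th_\S^\vee)=-4\ne(\d-\th_\S,\th_\S^\vee)=-2$; the involution actually pairs $\be$ with $-s(\be)=k\d-2\th_\S-\be$ (note $k\d-2\th_\S=2(\d-\th_\S)$ when $k=2$). More seriously, even with the corrected pairing, ``a product of pairwise-orthogonal reflections, one per pair of $N$'' is not a valid description of $w$: for $k=2$ the answer is the \emph{single} reflection $s_{\d-\th_\S}$, whose inversion set is all of $N$ (of cardinality ${\bf g}-g_\S+1$ by the computation in Lemma \ref{t}), so $w$ is certainly not a product of $|N|/2$ commuting reflections, and no telescoping computation of the kind you describe is available. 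Since the unproved step is precisely the content that makes the lemma nontrivial, the proposal as it stands does not close. The cheapest repair inside your framework is to show directly that $N(s_{\d-\th_\S})=N$ when $k=2$ (using $s_{\d-\th_\S}(\be)=\be+(\be,\th_\S^\vee)(\d-\th_\S)$ and the $\s$-height argument, which is exactly what the paper does) and to exhibit $s$ explicitly in the two $k=1$ cases of Remark \ref{rsigma}.
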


\begin{proof}
First we assume $k=2$. We claim that in this case $s=s_{\d-\theta_\S}$, which directly implies that it is an involution and, by Proposition \ref{hr}, that it belongs to $W(A(\Sigma))$.	It 
 is immediate that $s_{\d-\theta_\S}(\theta_\S)=2\d-\theta_\S$. Moreover, 
 for each $\a\in\Dap$ which is orthogonal to $\theta_\S$ we have 
$s_{\d-\theta_\S}(\a)=\a\in\Dap$, therefore,  by subsection \eqref{orbit}, $s$ is the unique element of minimal length that maps $\d-\theta_\S$ to $2\d-\theta_\S$. We study $N(s)$.
For each $\beta\in \Dp(A(\S))$,
$$
s(\beta)=\beta+(\beta, \theta_\S^\vee)(\d-\theta_\S)
$$ 
hence $s(\beta)<0$ if and only if $(\beta, \theta_\S^\vee)<0$. Thus if $(\beta, \theta_\S^\vee)=-2$, then $\beta\in N(s)$. It remains to prove the converse. Assume $s(\beta)<0$, hence $(\beta, \theta_\S^\vee)<0$: since  $(\alpha, \theta_\S)\geq 0$ for all
$\alpha\in \Pia\setminus \Pi_1$, this implies that $ht_\s(\beta)\geq 1$. Now we observe that, if $\beta\in N(s)$, then also  $-s(\beta)\in N(s)$, 
therefore $ht_\s(-s(\beta))\geq 1$ as well. Since 
$$ht_\s(s(\beta))=ht_\s(\beta) +(\beta, \theta_\S^\vee)ht_\s(\delta-\theta_\S)=ht_\s(\beta) +(\beta, \theta_\S^\vee),$$ we obtain  that
$-(\beta, \theta_\S^\vee)= ht_\s(\beta)+ht_\s(-s(\beta)) \geq 2$. But  
$k\d-\theta_\S$ belongs to  $\mathcal M_\s\subset \Pi_0^*$, therefore, by \eqref{pi0*}, we have
 $-(\beta, \theta_\S^\vee)\leq \frac{2\Vert \beta\Vert}{\Vert \th_\S\Vert}\leq2\sqrt{2}$, so we can conclude that
$-(\beta, \theta_\S^\vee)=2$ and  $ht_\s(\beta)=ht_\s(-s(\beta))= 1$.
\par
Now we assume $k=1$. By Remark \ref{rsigma}, then either $\g$ is of type $B_n$, $\Pi_1=\{\a_{n-1}\}$ and $\theta_\S=\a_n$, 
or  $\g$ is of type $C_n$, $\Pi_1=\{\a_0,\a_n\}$, $\S=\{\a_1,\ldots,\a_{n-1}\}$.
In the first case a straightforward check shows 
that $s=s_{n-1}\cdots s_2 s_0s_1s_2\cdots s_{n-1}=s_{\a_0+\a_2+\ldots+\a_{n-1}}
s_{\a_1+\a_2+\ldots+\a_{n-1}}$ maps $\a_n$ to $\d-\a_n$, $\a_{n-1}$ to $\a_{n-1}+2\a_n-\d$, fixes $\a_i,\,i=2,\ldots,n-2$ and switches $\a_0$ and $\a_1$. A positive root
$\gamma$ 
is orthogonal to $\a_n$ if and only if $c_{\a_{n-1}}(\gamma)=c_{\a_n}(\gamma)$. Therefore $s$ keeps positive any positive root orthogonal to $\a_n$, as required. 
It is clear that $s$ is an involution, being conjugated to $s_0s_1$. 
A direct computation shows that $N(s)=\{\beta\in\Dp(\Pia\setminus\{\a_n\})\mid c_{\a_{n-1}}(\beta)=1\}=\{\beta\in \Dap_1\mid (\beta, \theta_\S^\vee)=-2\}$. 
\par
For $\g$ of type $C_n$, $s=s_0s_n$ maps $\theta_\S=\a_1+\dots+\a_{n-1}$ to $\d-\theta_\S=\a_0+\dots+\a_{n}$. Moreover, a root in $\Dap$ is orthogonal to $\th_\S$ if and only if 
it is of the form $A\cup(\nat\d\pm A)$ where $A$ is formed by the roots in the subsystem generated by $\a_2,\ldots,\a_{n-2}$ and by the roots
$2\a_i+\ldots+2\a_{n-1}+\a_n,2\leq i\leq n-1$ and $\a_1+\ldots+\a_n$. A direct check shows that these roots are kept positive by $s$, which is therefore minimal.
It is immediate to see that $N(s)=\{\a_0, \a_n\}=\{\beta\in \Dap_1\mid (\beta, \theta_S^\vee)=-2\}$.
\end{proof}
\vskip5pt 

\begin{lemma}\label{technical} Assume $\S|\Pi_0$, $k\d-\theta_\S\in \mathcal M_\s$, $\a\in\Pia$, and  $\Vert\a\Vert=\Vert\theta_\S\Vert$. 
\begin{enumerate}
\item If  $\th_\Sigma$ is of type 1, $\a\in A(\Sigma)$, and  $w_\a$ is the element of minimal length such that $w_\a(\a)= k\d-\th_\Sigma$,  then $w_\a\in\Wab$. 
\item If $\th_\Sigma$ is  of type 2, $\a\in\Sigma$,  $v_\a$ is the element of minimal length in $W(\Sigma)$ such that $v_\a(\a)=\th_\Sigma$,  and $s$ is the element of minimal length in $\Wa$ such that $s(\theta_\S)=k\delta-\theta_\S$, 
then $sv_\a\in\Wab$. Moreover,  $\ell(sv_\a)=\ell(s)+\ell(v_\a)$ and $sv_\a$ is the element of minimal length in $\Wa$ that maps $\a$ to $k\delta-\theta_\S$.
\end{enumerate}
\end{lemma}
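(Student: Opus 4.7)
My plan is to apply Proposition~\ref{theta} to the finite irreducible root system $\D(A(\S))$. Since $\th_\S$ is of type~1, $r_\S=1$ and Proposition~\ref{hr} identifies $\mu:=k\d-\th_\S$ as its highest root; moreover $\Vert\mu\Vert=\Vert\th_\S\Vert=\Vert\a\Vert$ (using $(\d,\th_\S)=0$), so $\a$ is a long simple root of $\D(A(\S))$. Proposition~\ref{theta}(1) then yields the unique minimal-length $y_\a\in W(A(\S))$ with $y_\a(\a)=\mu$, and I claim that $w_\a=y_\a$ and $y_\a\in\Wab$. For minimality in $\Wa$, the criterion \eqref{mcr+} of Subsection~\ref{orbit} requires $y_\a^{-1}(\be)>0$ for every $\be\in\Dap$ orthogonal to $\mu$: when $\be\in\D(A(\S))^+$ this is Proposition~\ref{theta}(8), and when $\be\notin\D(A(\S))$ it follows from the standard fact that the parabolic $W(A(\S))$ preserves $\Dap\setminus\D(A(\S))^+$ (which also confines $N(y_\a)$ to $\D(A(\S))^+$). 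For $\s$-minusculeness, Proposition~\ref{theta}(5) together with $ht_\s(\mu)=2$ reduces the task to excluding pairs $\be_1+\be_2=\mu$ of $\s$-height profile $(0,2)$: such a $\be_1$ would lie in $\D(A(\S)\cap\Pi_0)^+$, and the key structural observation is that every simple root $\gamma\in A(\S)\cap\Pi_0$ is orthogonal to $\th_\S$ --- inside $\S$, dominance of $\th_\S$ gives $(\gamma,\th_\S)\ge 0$ while $A(\S)$-membership forces $(\gamma,\th_\S)\le 0$, and simple roots in other components of $\Pi_0$ are automatically orthogonal to $\th_\S$. Hence $(\mu,\be_1)=0$, and since $\mu$ is the highest root of $\D(A(\S))$ the $\be_1$-string through $\mu$ is trivial, giving $\mu-\be_1\notin\D$, which excludes the pair.

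\textbf{Part (2).} Apply Proposition~\ref{theta}(1) to $\D(\S)$ (whose highest root $\th_\S$ is long, and $\a\in\S$ is long) to produce $v_\a\in W(\S)$ with $N(v_\a)\subset\D(\S)^+\subset\Da_0$. By Lemma~\ref{esse}, $s$ is an involution with $N(s)\subset\Da_1$. For $\gamma\in N(v_\a)$, since $\gamma$ has $\s$-height $0$, $\gamma\notin N(s)$ and $s(\gamma)>0$; this is exactly the length-additivity condition of property~(5)(c) in Subsection~\ref{is}, yielding $\ell(sv_\a)=\ell(s)+\ell(v_\a)$. For minimality of $sv_\a$ in $\Wa$, Subsection~\ref{orbit} requires $(sv_\a)^{-1}(\be)>0$ for every $\be\in\Dap$ orthogonal to $k\d-\th_\S$, equivalently orthogonal to $\th_\S$ (as $\d$ is orthogonal to every real root); the description of $N(s)$ in Lemma~\ref{esse} gives $\be\notin N(s)$, so $s(\be)>0$ and $s(\be)\perp\th_\S$ (by $s$-invariance of the form), whence $v_\a^{-1}s(\be)>0$ either by minimality of $v_\a$ inside $\D(\S)$ or by the parabolic property. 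Finally, $N(sv_\a)=N(s)\cup s(N(v_\a))$: the first summand is in $\Da_1$ by Lemma~\ref{esse}, and for the second, Proposition~\ref{theta}(5) exhibits each $\gamma\in N(v_\a)$ in a pair $\gamma+\gamma'=\th_\S$ with $\gamma,\gamma'\in\D(\S)^+$, so longness of $\th_\S$ in $\S$ gives $(\gamma,\th_\S^\vee),(\gamma',\th_\S^\vee)\in\{0,1\}$ with sum $2$, forcing $(\gamma,\th_\S^\vee)=1$; a direct computation of $s(\gamma)$ then yields $ht_\s(s(\gamma))=1$ (when $k=2$, $s=s_{\d-\th_\S}$ gives $s(\gamma)=\gamma+(\gamma,\th_\S^\vee)(\d-\th_\S)$ with $ht_\s(\d-\th_\S)=1$; for the $k=1$ cases of Lemma~\ref{esse}, $s$ is the explicit product of simple reflections recorded there and the $\s$-height computation is carried out type by type).

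\textbf{Main obstacle.} The $\s$-minusculeness step is the delicate point: in part~(1) it rests on the structural fact $A(\S)\cap\Pi_0\subset\th_\S^\perp$ established above, and in part~(2), especially in the $k=1$ cases of Lemma~\ref{esse} where $s$ is not a single reflection, the $\s$-height of $s(\gamma)$ has to be read off type by type from the explicit formulas.
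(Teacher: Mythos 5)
Your proof is correct and follows essentially the same route as the paper: part (1) reduces to Proposition \ref{theta}(5) applied to $\D(A(\S))$ with highest root $k\d-\theta_\S$ (via Proposition \ref{hr}), and part (2) combines Lemma \ref{esse} with Proposition \ref{theta}, length additivity, and the criterion of Subsection \ref{orbit} exactly as the paper does, including the deferred type-by-type check in the $k=1$ cases. The only local divergence is the final step of (1): the paper concludes $ht_\s(\beta)=ht_\s(\beta')=1$ by invoking the minimality of $k\d-\theta_\S$ among roots of $\s$-height $2$ (Remark \ref{hts}), whereas you exclude the $(0,2)$ profile via the orthogonality $A(\S)\cap\Pi_0\subset\theta_\S^\perp$ and a root-string argument; both are valid.
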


\begin{proof} (1). By Proposition \ref{hr} (1) and Proposition \ref{theta} (5), if $\beta\in N(w_\a)$, then there exists $\beta'\in \Dap$ such that $\beta+\beta'= k\d-\th_\Sigma$. By Remark \ref{hts}, each root less than $\mu$ in the usual root order has $\s$-height strictly less than 2, hence $ht_\s(\beta)=ht_\s(\beta')=1$. 
\par  
 (2). Assume first $k=2$, so that $s=s_{\d-\th_\S}$.   
 We first show that $s_{\d-\theta_\S}(\beta)=\beta+\d-\theta_\S$ for each $\beta\in N(v_\a)$. This amounts to prove 
  that $(\theta_\S^\vee,\beta)=1$ for each $\beta \in N(v_{\a})$, which follows again from Proposition \ref{theta}, (2). Thus we obtain that the $\s$-height of the roots in $s_{\d-\theta_\S}(N(v_\a))$ is 1; moreover,  
$$
N(sv_\a)=N(s_{\d-\theta_\S})\cup s_{\d-\theta_\S}(N(v_\a))
$$
and $\ell(sv_\a)=\ell(s)+\ell(v_\a)$.
Since by Lemma \ref{esse},  for each $\beta\in N(s)$,  $ht_\s(\beta)=1$, we conclude that  $sv_\a\in\Wab$.
It remains to prove the assertion about the minimal length. Notice that the above considerations show in particular that, for each $\beta\in N(sv_\a)$, we have that $(\beta, k\d-\theta_\S)\ne 0$. By subsection \ref{orbit}, it follows that
$sv_\a$ is the unique element of minimal length that maps $\a$ to  $k\d-\theta_\S$.
\par
 In the  case of $B_n,$ one has
  $N(sv_\a)=N(s)=\{\beta\in\Dp(\Pia\setminus\{\a_n\})\mid c_{\a_{n-1}}(\beta)=1\}$.
  This follows noting that $L(s)=\{\a_{n-1}\},\ell(s)=2n-2=|\Dp(\Pia\setminus\{\a_{n}\})
  |-|\Dp(\Pia\setminus\{\a_{n-1},\a_{n}\})|$.\par
  In the case $C_n$, we first remark that $sv_{\a_i}=s_0\cdots s_{i-1} s_n\cdots s_{i+1},\,1\leq i\leq n-1$. Thus,
  \begin{align*}N(sv_{\a_i})&=N(s_0\cdots s_{i-1} s_{n}\cdots s_{i+1})\\&=\{\a_0+\ldots+\a_k\mid 0\leq k\leq i-1\}\cup \{\a_h+\ldots+\a_n\mid i+1\leq h\leq n\},\end{align*}
 whose elements have clearly $\s$-height 1. 
The same argument used in case $k=2$ proves that also in this case 
 $sv_\a$ is the unique element of minimal length that maps $\a$ to  $k\d-\theta_\S$.
\end{proof}
\vskip5pt

\begin{lemma}\label{basic}
Assume $\mu\in \mathcal M_\s$, $\a\in \Pia$, and $w\in \mathcal I_{\a,\mu}$. Then
\begin{enumerate}
\item
for each $\beta\in N(w)$, $\mu+\beta\not\in N(w)$;
\item 
for each $\beta, \beta'\in \Dap$ such that $\beta+\beta'=\mu$, exactly one of $\beta$, $\beta'$ belongs to $N(w)$.
\end{enumerate}
\end{lemma}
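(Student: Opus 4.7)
The plan is to dispatch (2) and (1) separately. Both reduce to translating the hypothesis through $w$, but using very different inputs: for (2) one exploits the simplicity of $\a$, and for (1) one uses that $w\in\Wab$ forces $N(w)\subset\Da_1$.

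For (2), applying $w^{-1}$ to $\beta+\beta'=\mu=w(\a)$ gives
$$
w^{-1}(\beta)+w^{-1}(\beta')=\a\in\Pia.
$$
Each of $w^{-1}(\beta)$ and $w^{-1}(\beta')$ is a real root, hence lies either in $\Dap$ or in $-\Dap$. A simple root has height $1$, so it cannot be written as a sum of two positive roots; and since $\a\in\Dap$, it cannot be written as a sum of two negative roots either. Hence exactly one of $w^{-1}(\beta),w^{-1}(\beta')$ is in $-\Dap$, which is precisely the statement that exactly one of $\beta,\beta'$ lies in $N(w)$.

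For (1), I would argue by contradiction: suppose both $\beta$ and $\mu+\beta$ lie in $N(w)$. Since $w\in\Wab$ means $N(w)\subseteq\Da_1$, comparing $\s$-heights yields $ht_\s(\mu)=ht_\s(\mu+\beta)-ht_\s(\beta)=0$. The key observation is that every $\mu\in\mathcal M_\s$ has $ht_\s(\mu)\ge 2$. Indeed, from the relation $k\sum_i s_i a_i =2$ recalled in Section~\ref{ta} we read off $ht_\s(k\d)=2$, so consulting the definition \eqref{phis} of $\Phi_\s$ we find the two possible shapes of $\mu$ satisfy $ht_\s(k\d-\theta_\S)=2-0=2$ (as $\theta_\S\in\D_0$) or $ht_\s(\a+k\d)=1+2=3$ (for $\a\in\Pi_1$). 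Either way we contradict $ht_\s(\mu)=0$.

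I do not anticipate any serious obstacle; the only mild care is in unpacking the definition of $\Phi_\s$ to bound $ht_\s(\mu)$ from below in (1). No information about the poset structure of $\mathcal I_{\a,\mu}$, nor any property of $\a$ beyond its being simple, enters the argument.
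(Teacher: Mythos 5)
Your proof is correct. For part (1) you use the same key obstruction as the paper --- the $\s$-height count, with $ht_\s(\mu)\ge 2$ for $\mu\in\mathcal M_\s$ and $N(w)\subset\Da_1$ --- but you apply it directly to the hypothesis $\beta,\mu+\beta\in N(w)$, whereas the paper first uses convexity of $N(ws_\a)=N(w)\cup\{\mu\}$ to show the stronger fact that $\mu+\beta$ is not even a positive root, and only then invokes the height bound. The stronger form is what actually gets cited later (e.g.\ in Lemmas \ref{minimo} and \ref{orto}, where one needs that $\mu+\gamma$ is not a root for $\gamma\in N(w)$); your version recovers it only after reinserting the convexity step, so be aware that you have proved the literal statement but not the form in which it is subsequently used. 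For part (2) your route is genuinely different and arguably more elementary: instead of the paper's biconvexity argument (at least one summand lies in $N(ws_\a)$ since its complement is closed under addition, at most one since $N(w)$ is closed under addition and $\mu\notin N(w)$), you pull the decomposition back through $w^{-1}$ to get $w^{-1}(\beta)+w^{-1}(\beta')=\a$ and observe that a simple root cannot be written as a sum of two roots of the same sign. This buys you an argument that uses nothing beyond the definition of $N(w)$ and the simplicity of $\a$ (in particular it does not use $w\in\Wab$), at the cost of not producing the convexity-flavoured statements that the rest of the paper leans on. One cosmetic remark: the clause ``each of $w^{-1}(\beta)$, $w^{-1}(\beta')$ is a real root'' is unnecessary and not quite justified (a summand of $\mu$ could a priori be imaginary); all you need is that every root is either positive or negative, which holds for real and imaginary roots alike.
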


\begin{proof} (1). We have  
\begin{equation}\label{fff}N(ws_\a)=N(w)\cup\{\mu\}.\end{equation} 
If, for some  $\beta\in N(w)$, $\beta+\mu\in\Dap$, then by the convexity properties, we would obtain $\beta+\mu\in N(w)$: this cannot happen since $ht_\s(\beta+\mu)\geq 3$, while $w$ is $\s$-minuscule. 
\par
(2).
 By the convexity properties, relation \eqref{fff} implies that $N(ws_\a)$ contains at least one summand of each decomposition 
$\mu=\beta+\beta'$, hence $N(w)$ does. Since $\mu\not\in N(w)$, it contains exactly one summand. 
\end{proof}
\vskip5pt

\begin{lemma}\label{minimo}
Assume $\mu=k\d-\theta_\S\in \mathcal M_\s$, $\a\in \Pia$, and $w\in \mathcal I_{\a,\mu}$. Then there exists $u\in \Wa$ such that 
$$\{\beta\in N(w)\mid \mu-\beta\in \Dap\}=N(u).$$
In particular, $u\leq w$. Moreover, $u$ belongs to $\mathcal I_{\a,\mu}$. 
\end{lemma}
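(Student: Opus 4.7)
The plan is to prove that $L:=\{\be\in N(w)\mid\mu-\be\in\Dap\}$ is biconvex in the set of positive real roots, which by fact \ref{is}(3) yields a unique $u\in\Wa$ with $N(u)=L$; then $u\le w$ follows from \ref{is}(4), and $u\in\Wab$ automatically since $N(u)\subseteq N(w)\subseteq\Da_1$. The remaining step is to verify $u(\a)=\mu$, so that $u\in\mathcal I_{\a,\mu}$.

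For closure of $L$ under root addition: any two elements of $L\subseteq\Da_1$ have $\s$-height $1$, so a sum that were a positive real root would, by biconvexity of $N(w)$, lie in $N(w)\subseteq\Da_1$, contradicting $ht_\s=2$; closure is thus vacuous. For closure of the complement, I rephrase as: for every $\xi\in L$ and every decomposition $\xi=\be_1+\be_2$ into positive real roots, at least one summand lies in $L$. Biconvexity of $\Dap\setminus N(w)$ forces some $\be_i$ into $N(w)$; having both $\be_i$ in $N(w)$ is impossible by the same $\s$-height count, so up to relabeling $\be_1\in N(w)$, $\be_2\notin N(w)$, and $ht_\s(\be_2)=0$. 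The Jacobi-type identity for root systems---if $\alpha,\alpha',\alpha+\alpha',\gamma,\alpha+\alpha'+\gamma$ are all roots, then at least one of $\alpha+\gamma,\alpha'+\gamma$ is a root or zero---applied with $\alpha=\be_1$, $\alpha'=\be_2$, $\gamma=\mu-\xi$, $\alpha+\alpha'+\gamma=\mu$, yields that at least one of $\mu-\be_1=(\mu-\xi)+\be_2$ or $\mu-\be_2=(\mu-\xi)+\be_1$ is a root, necessarily positive as a sum of positive roots. The case $\mu-\be_2\in\Dap$ is impossible: Lemma \ref{basic}(2) applied to $\mu=\be_2+(\mu-\be_2)$ forces one of $\be_2,\mu-\be_2$ into $N(w)$, but $\be_2\notin N(w)$ by hypothesis and $\mu-\be_2\in\Da_2$ cannot lie in $N(w)\subseteq\Da_1$. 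Hence $\mu-\be_1\in\Dap$, so $\be_1\in L$, as required.

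To verify $u(\a)=\mu$, I would run the same biconvexity analysis on $L\cup\{\mu\}$---its additional closure conditions reduce to the observation that any sum involving $\mu$ has support in the finite subsystem $\Da(A(\S))$, in which $\mu$ is the highest (or highest short) root by Proposition \ref{hr}, so no such sum is a root. This produces $u'\in\Wa$ with $N(u')=L\cup\{\mu\}$ and $\ell(u')=\ell(u)+1$, whence $u'=us_\gamma$ for a unique simple $\gamma$ with $u(\gamma)=\mu$. Identifying $\gamma=\a$ proceeds via subsection \ref{orbit}: the coset $\Wa_\mu w$ (with $\Wa_\mu$ the stabilizer of $\mu$) has a unique minimum length representative, characterized as the minimum length element mapping $w^{-1}(\mu)=\a$ to $\mu$ and satisfying \eqref{mcr+}; verifying that our $u$ satisfies this characterization identifies it with that representative, giving $u(\a)=\mu$.

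The main obstacle I expect is precisely this last identification step, which must bridge the abstract biconvexity description of $u$ with the explicit coset structure of $\Wa_\mu w$. Extra care is also needed in the type 2 subcase of Proposition \ref{hr}, where $\mu$ is only a highest short root rather than the highest root of $\Da(A(\S))$, to ensure closure of $L\cup\{\mu\}$ still holds.
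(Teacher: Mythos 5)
The first half of your argument --- biconvexity of $L$, hence existence of $u$ with $N(u)=L$, $u\le w$, and $u\in\Wab$ --- is correct and is essentially the paper's proof; your use of Lemma \ref{basic}(2) to exclude $\mu-\be_2\in\Dap$ is a harmless variant of the paper's appeal to the minimality of $\mu$ in $\Da_2$. The gaps are all concentrated in the step $u(\a)=\mu$. First, your biconvexity check for $L\cup\{\mu\}$ treats only the sums $\xi+\mu$ and omits the other, essential, half: for every decomposition $\mu=\gamma_1+\gamma_2$ with $\gamma_i\in\Dap$ one must show that some $\gamma_i$ lies in $L$. This is exactly where Lemma \ref{basic}(2) combines with the definition of $L$ (the summand lying in $N(w)$ automatically satisfies $\mu-\gamma_i\in\Dap$, hence lies in $L$), and it is what forces $\mu$ to be of the form $u(\gamma)$ for a \emph{simple} $\gamma$. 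Moreover your justification for $\xi+\mu\notin\Dap$ is wrong as stated: roots of $N(w)$ need not be supported in $A(\S)$, and in the type-2 case $\mu=2\d-\theta_\S$ is not the highest (short) root of $\D(A(\S))$ --- Proposition \ref{hr} concerns $\d-\theta_\S$. The correct reason is the one in the proof of Lemma \ref{basic}(1): $N(ws_\a)=N(w)\cup\{\mu\}$ is convex while $ht_\s(\xi+\mu)\ge 3$.

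Second, and more seriously, the identification $\gamma=\a$ is not actually proved. The characterization \eqref{mcr+} identifies $u$ as the minimal length element of \emph{its own} coset $\Wa_\mu u$, i.e.\ as the minimal element sending $u^{-1}(\mu)$ to $\mu$; to conclude $u^{-1}(\mu)=\a$ you would need to know beforehand that $u$ lies in the coset $\Wa_\mu w$, which is equivalent to $u^{-1}(\mu)=w^{-1}(\mu)=\a$ --- the very thing to be shown. You correctly flag this as the main obstacle, but the route you sketch does not close that circle. The paper's argument is direct and does not pass through coset representatives: write $w=uz$ with $N(w)=N(u)\cup uN(z)$; if $u(\beta)=\mu$ with $\beta\ne\a$, then $z(\beta)\ne\beta$, so by \eqref{NW} $N(z)$ contains a root $\gamma$ with $\gamma\not\perp\beta$, whence $u(\gamma)\in N(w)\setminus N(u)$ and $u(\gamma)\not\perp\mu$; then $\mu+u(\gamma)\in\Dap$ is excluded by Lemma \ref{basic}(1), while $\mu-u(\gamma)\in\Dap$ would put $u(\gamma)$ in $N(u)$ --- a contradiction either way. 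Some such argument must be supplied for your proof to be complete.
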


\begin{proof}
Set $U=\{\beta\in N(w)\mid \mu-\beta\in \Dap\}$. We first prove the existence of $u$: we have only to check  that $U$ is biconvex. 
We observe that, if $\beta,\beta'\in U$, then $\beta+\beta'$ is not  a root, otherwise it would belong to $N(w)$, which impossible since $ht_\s(\beta+\beta') =2$ and $w$ is $\s$-minuscule. 
Thus we have only to check that, if $\beta\in U$ and $\beta=\gamma+\gamma'$, then at least one of $\gamma$, $\gamma'$ belongs to $U$. Clearly, at least (in fact exactly) 
 one of 
$\gamma,\gamma'$, say $\gamma$, belongs to $N(w)$. We have to prove that $\mu-\gamma$ is a positive root. Set $\beta'=\mu-\beta$: by definition, $\beta'$ is a positive root and it is immediate that  $ht_\s(\beta')=1$. 
Since $\gamma+\gamma'+\beta'=\mu$, at least 
one of $\gamma+\beta'$, $\gamma'+\beta'$, is a root, otherwise, by the Jacobi 
identity,  $\gamma+\gamma'+\beta'$  would not be a root. 
But $\gamma+\beta'$ cannot be a root, otherwise it would have $\s$-height equal to 2, while being less than $\mu$. Therefore $\mu -\gamma=\gamma'+\beta'$ is a root, as required.\par
It remains to prove that $u\in\mathcal I_{\a,\mu}$. It is clear that $u\in\Wab$, 
we have only to check that $u(\a)=\mu$.
By Lemma \ref{basic} (2), $N(w)$ contains exactly one summand of any decomposition of $\mu$ as a sum of two positive roots and, by the definition of $u$, $N(u)$ has the same property. From this fact, we easily deduce that 
$N(u)\cup\{\mu\}$ is biconvex, hence that there exist a simple root
$\beta\in \Pia$ such that $N(us_\beta)=N(u)\cup\{\mu\}$.
But $N(us_\beta)=N(u)\cup\{u(\beta)\}$, hence $u(\beta)=\mu$.
We must prove that $\beta=\a$. 
Since $u\le w$, there exists $z\in \Wa$ such that $w=uz$ and 
$N(w)=N(u)\cup uN(z)$. If $\beta\ne\a$, since $w(\beta)=uz(\beta)\ne\mu$, we obtain that 
$z(\beta)\ne\beta$, hence, by formula \eqref{NW}, that $N(z)$ contains at least one root $\gamma$ such that $\gamma\not \perp \beta$.  Then $u(\gamma)\not \perp\mu$ and $u(\gamma)\in N(w)\setminus N(u)$: we show that this is a contradiction. 
In fact, $u(\gamma)\not \perp\mu$ implies that either $\mu+u(\gamma)$ or $\mu-u(\gamma)$ is a positive root: the first instance is impossible by Lemma \ref {basic} (1); the second one is impossible because it would imply that $u(\gamma)\in N(u)$. 
\end{proof}
\vskip5pt

Assume $\mu=k\delta-\theta_\S$. In Lemma \ref{technical} we  have constructed elements $w_\a$ and $sw_\a$ belonging to $\mathcal I_{\a,\mu}$, under certain restrictions  on $\a$.
In particular, we have proved  that, under such restrictions, $\mathcal I_{\a,\mu}$ is not empty.
 In the next proposition we prove that if $\mathcal I_{\a,\mu}$ is not empty, then $\a$ must satisfy the conditions of Lemma  \ref{technical} (1) (resp. (2)) and the element $u$ built in Lemma \ref{minimo} is actually  
 $w_\a$ (resp. $sv_\a$).  We have therefore determined necessary and sufficient conditions under which   $\mathcal I_{\a,\mu}$ is not empty.
\vskip5pt

\begin{prop}\label{paola} 
 Assume $\S|\Pi_0$, $k\delta-\theta_\S\in \mathcal M_\s$, $\a\in\Pia$, and $w\in \mathcal I_{\a,k\delta-\theta_\S}$. 
\begin{enumerate}
\item
If $\theta_\S$ is of type 1, then $\a\in A(\S)$ and $w\geq w_{\a}$.
\item
If $\theta_\S$ is of type 2, then $\a\in \S$ and $w\geq sv_{\a}$. 
\end{enumerate}
\end{prop}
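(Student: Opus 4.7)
The plan is to take an arbitrary $w\in\mathcal I_{\a,\mu}$ and apply Lemma \ref{minimo} to produce $u\le w$ in $\mathcal I_{\a,\mu}$ with $N(u)=\{\beta\in N(w)\mid \mu-\beta\in\Dap\}$; by Lemma \ref{basic}(2) this $N(u)$ selects exactly one summand of each decomposition $\mu=\beta+(\mu-\beta)$ with both summands in $\Dap$, while $\mu\notin N(u)$. The goal is to identify $u$ with $w_\a$ in case (1) and with $sv_\a$ in case (2); since $w\ge u$, this proves the proposition, and the membership conditions $\a\in A(\S)$ (resp.\ $\a\in\S$) will emerge automatically.

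In case (1), by Lemma \ref{fund} the subdiagram $A(\S)$ is connected with $\text{supp}(\mu)\subseteq A(\S)$, and by Proposition \ref{hr} the root $\mu$ is the highest root of $\D(A(\S))$. For $\beta\in N(u)$, the relation $\mu-\beta\in\Dap$ forces $\beta\le\mu$, hence $\text{supp}(\beta)\subseteq A(\S)$; thus $N(u)\subseteq\Dp(A(\S))$ and $u$ lies in the standard parabolic subgroup $W(A(\S))$. Proposition \ref{theta}(6), applied to the irreducible finite root system $\D(A(\S))$ with highest root $\mu$, then produces a long simple root $\beta\in A(\S)$ with $u(\beta)=\mu$; combined with $u(\a)=\mu$ this forces $\a=\beta$, so $\a\in A(\S)$ is long. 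The same support argument places $w_\a$ in $W(A(\S))$; by Proposition \ref{theta}(1) and the uniqueness of the minimum-length element of $\Wa$ mapping $\a$ to $\mu$ (subsection \ref{orbit}), $w_\a$ coincides with the minimum-length element $y_\a$ of $W(A(\S))$ with $y_\a(\a)=\mu$. A length count using Proposition \ref{theta}(5) gives $|N(u)|=|N(w_\a)|$, and the uniqueness of the minimal right coset representative of $\text{Stab}(\mu)\cdot u$ in $W(A(\S))$ then yields $u=w_\a$.

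For case (2), $\mu$ need not be the highest root of $\D(A(\S))$ and the direct analog fails, so the proof is routed through the involution $s$ of Lemma \ref{esse} which swaps $\theta_\S$ with $\mu$. The key claim is that $N(s)\subseteq N(u)$; once this is established, $s\le u$ in the weak order, so $u=s\cdot(su)$ is a reduced factorisation with $N(u)=N(s)\sqcup s(N(su))$. Setting $u'=su$, one has $u'(\a)=s(\mu)=\theta_\S$, and translating the decomposition-picking property of $N(u)$ through $s$ shows $N(u')\subseteq \Dp(\S)$ together with the property that $N(u')$ selects exactly one summand of each decomposition of $\theta_\S$ in $\D(\S)$. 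Proposition \ref{theta}(6) applied in $W(\S)$ then produces a long simple $\beta\in\S$ with $u'(\beta)=\theta_\S$, forcing $\a=\beta\in\S$, and a length count as in case (1) identifies $u'$ with $v_\a$, so that $u=sv_\a$.

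The technical core is the inclusion $N(s)\subseteq N(u)$ in case (2). For $\beta\in N(s)$, Lemma \ref{esse} gives $\beta\in\Dap_1$ and $(\beta,\theta_\S^\vee)=-2$, and the root string through $\theta_\S$ shows that $\beta$ is long, $\theta_\S$ is short, and $\beta+\theta_\S$ is a short positive root of $\s$-height $1$; from this it follows that $\mu-\beta=k\delta-(\beta+\theta_\S)$ is itself a positive real root. By Lemma \ref{basic}(2), exactly one of $\beta,\mu-\beta$ belongs to $N(w)$, and the delicate step is to rule out the choice $\mu-\beta\in N(w)$, which uses the fact that $\mu-\beta\in\theta_\S^\perp$ together with the biconvexity of $N(w)$; the $k=1$ subcases (types $B_n^{(1)}$ and $C_n^{(1)}$) are handled directly via the explicit description of $s$ from the proof of Lemma \ref{esse}.
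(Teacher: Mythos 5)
Your strategy is the paper's: pass to the element $u$ of Lemma \ref{minimo}, identify it with $w_\a$ in case (1) via Proposition \ref{hr} and Proposition \ref{theta} (6), and in case (2) first establish $N(s)\subseteq N(u)$ and then apply Proposition \ref{theta} (6) inside $W(\S)$ to $u'=su$. Case (1) is correct, and the skeleton of case (2) is the right one.

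The problem is your justification of the technical core of case (2). From $\beta\in N(s)$, i.e.\ $\beta\in\Dap_1$ with $(\beta,\theta_\S^\vee)=-2$, you deduce that $\beta$ is long, $\theta_\S$ is short, and $\beta+\theta_\S$ is a short positive \emph{real} root, so that $\mu-\beta=k\d-(\beta+\theta_\S)$ is a positive real root. This fails whenever $\theta_\S$ is long and complex, which by Remark \ref{rsigma} is one of the two ways type 2 occurs (namely $\ov\a=-\ov{\theta_\S}$ for $\a\in\Pi_1$); it is exactly what happens in the adjoint case $\g=\k\oplus\k$, the case that recovers Peterson's and Panyushev's theorems. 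There $\beta=\d-\theta_\S$ has the same length as $\theta_\S$, $\beta+\theta_\S=\d$ is imaginary, and $\mu-\beta=\d$ is a positive \emph{imaginary} root; your mechanism for excluding $\mu-\beta\in N(w)$ (orthogonality to $\theta_\S$ plus biconvexity) is likewise calibrated for a real root. The facts you actually need remain true --- $\mu-\beta\in\Dap$ in all cases, and an imaginary root is never in $N(w)$ --- but your derivation does not cover them. The paper's argument needs no length analysis: since $(\beta,\mu^\vee)=-(\beta,\theta_\S^\vee)=2$, the $\mu$-string through $\beta$ shows that $\mu-\beta$ and $2\mu-\beta$ are both roots, and both are positive because their $\s$-heights are positive; Lemma \ref{basic} (1) then excludes $\mu-\beta$ from $N(w)$ (its sum with $\mu$ is a root), and Lemma \ref{basic} (2) puts $\beta$ in $N(w)$, hence in $N(u)$. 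Substituting this for your length argument repairs the step; the rest of your outline agrees with the paper's proof.
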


\begin{proof} 

(1). Set $\mu=k\d-\theta_\S$ and consider the element $u$ built in Lemma \ref{minimo}. By Lemma \ref{basic} (2) and by the definition of $u$, $N(u)$ contains exactly one summand of any decomposition of $\mu$ as a sum of two positive roots, and each element of $N(u)$ is one of the  summands of such a decomposition.  By Proposition \ref{hr} and by Proposition \ref{theta} (6), there exists a simple root $\beta\in A(\S)$ such that 
$u(\beta)=\mu$, and $u$ is the minimal length element with this property. But $u(\a)=\mu$, hence $\a=\beta\in A(\S)$, and $u=w_\a$.  
\par
(2). As above, we set $\mu=k\d-\theta_\S$ and consider the element $u$ built in Lemma \ref{minimo}. We claim that in this case $\a\in \S$ and $u=sv_\a$, which clearly implies the thesis. 
\par
We start proving that $s<u$, which, by Lemma \ref{esse}, consists in proving that all $\beta\in\Dap_1$ such that $(\beta,\mu^\vee)=2$ belong to $N(u)$.
Assume $\beta\in \Dap_1$ and $(\beta,\mu^\vee)=2$: this imply that $\mu-\beta$ and $2\mu-\beta$ are roots, and positive, having positive $\s$-height. By Lemma \ref{basic} (1), 
$\mu-\beta\not\in N(u)$, since $\mu-\beta+\mu$ is a root, hence  $\beta\in N(u)$.
So $s<u$, i.e. there exists $v\in \Wa$ such that 
$u=sv$ and $N(u)=N(s)\cup s N(v)$. It remains to prove that $\a\in \S$ and $v=v_\a$.
\par
First,  we prove that for all $\beta\in N(u)$, we have that $(\beta,\mu^\vee)>0$.  Assume by contradiction that 
$\beta\in N(u)$ and $(\beta,\mu^\vee)=0$, and set $\beta'=\mu-\beta$. 
Then $ht_\s(\beta')=1$ and $(\beta',\mu^\vee)=2$: by the previous part, this implies $\beta'\in N(u)$, which is impossible. Therefore we have 
$(\beta,\mu^\vee)>0$, hence $(\beta,\mu^\vee)\in \{1, 2\}$, since
$\mu\in \mathcal M_\s$. It follows that $s N(v)\subseteq \{\beta\in \Dap_1\mid (\beta,\mu^\vee)=1\}$. 
\par 
Now, we  claim that $N(v)\subseteq \D(\S)$ and that, for each $\beta\in\Dp(\S)$ such that $\theta_\S-\beta$ is positive, exactly one 
among $\beta$ and $\theta_\S-\beta$ belongs to $N(v)$.
Assume $\beta\in N(v)$ and set $\beta'=s(\beta)$. Then $ht_\s(\beta')=1$ and $(\beta',\mu^\vee)=1$, so that $\mu-\beta'$ is a positive root, $(\mu-\beta',\mu^\vee)=1$, and $ht_\s(\mu-\beta')=1$.   By the explicit description of $N(s)$, $\theta_\S-\beta=s(\mu-\beta')$ is positive, hence $\beta\in \D(\S)$. 
Now let $\beta\in\D(\S)^+$ be such that $\theta_\S-\beta\in \D(\S)^+$ and set 
$\beta'=s(\beta)$.  
Then, $\theta_\S$ being long with respect to $\D(\S)$, we obtain that 
$(\beta, \theta^\vee_\S)=(\theta_\S-\beta, \theta^\vee_\S)=1$, hence
$(\beta', \mu^\vee)=(\mu-\beta', \mu^\vee)=1$. Moreover, by the explict description of $N(s)$,  
 both $\beta'$ and $\mu-\beta'$ are positive, therefore both have $\s$-height equal to 1. 
By Lemma \ref{basic} (2), it follows that exactly one among them belongs to $N(u)$, hence to $sN(v)$, therefore, exactly one 
among $\beta$ and $\theta_\S-\beta$ belongs to $N(v)$.
Thus $v$ has the property of Proposition \ref{theta} (6), whence there exists $\beta\in \S$ such that $v=v_\beta$. But $s v_\beta(\beta)=\mu$, hence $\beta=\a$. 
\end{proof}
\vskip5pt

We have finally to deal with the posets $\mathcal I_{\a,\mu}$ with $\mu=\be+k\delta$,  $\be\in\Pi_1$. According to our definitions, 
\eqref{phis} and \eqref{bw}, the assumption that $\be+k\delta\in \mathcal M_\s$
implies that $\be$ is long.
\par

We start  refining the analysis done in  \cite[Lemma 5.10]{IMRN}.

\begin{prop}\label{om}
If $\g^{\bar 0}$ is semisimple, then  $\g^{\bar 1}$ is irreducible as a $\g^{\bar 0}$-module. If $\g^{\bar 0}$ is not semisimple, then  $\g^{\bar 1}$ has two irreducible components as a $\g^{\bar 0}$-module. As a consequence, the following 
holds. Denote by $w_0$ the longest element of  $W_0$. Then
\begin{enumerate} 
\item if $\Pi_1=\{\a\}$   then $w_0(\a)=k\d-\a$;
\item 
if $\Pi_1=\{\a,\be\}$, then 
$w_0(\a)=\d-\be$ and $w_0(\be)=\d-\a$.
\end{enumerate}
\end{prop}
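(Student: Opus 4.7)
The plan is to establish the structural part first, then derive the formulas for $w_0$. Since $w_0\in W_0\subset \Wa$ acts on $\h_0$ and fixes $d$ (hence $\d'$ and $\d$), and $\a = \d'+\ov\a$ for $\a\in\Pi_1$, the identity $w_0(\a)=k\d-\a$ reduces to $w_0(\ov\a)=-\ov\a$ in case (1), while $w_0(\a)=\d-\be$ reduces to $w_0(\ov\a)=-\ov\be$ in case (2) (where $k=1$). The structural dichotomy is a bookkeeping matter: because $\g^{\bar 0}$ has rank $n$ while $|\Pi_0|=n+1-|\Pi_1|$, the algebra $\g^{\bar 0}$ is semisimple precisely when $|\Pi_1|=1$; when $|\Pi_1|=2$ there is a one-dimensional center and we are necessarily in the untwisted case $k=1$. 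The irreducibility of $\g^{\bar 1}$ in the first situation, and its splitting into two irreducible pieces $\p^-\oplus\p^+$ in the second (corresponding to the $3$-grading $\g=\p^-\oplus\g^{\bar 0}\oplus\p^+$ of the Hermitian symmetric setting, with $\p^\pm$ abelian and mutually dual via the restricted Killing form), are standard consequences of Kac's theory of periodic automorphisms (\cite[Ch.\ 8]{Kac}).

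For case (2), after possibly applying a diagram automorphism of $\Pia$ we may assume $\a=\a_0=\d-\theta$, where $\theta$ is the highest root of $\g$, so $\ov\a=-\theta$. The irreducible $\g^{\bar 0}$-module $\p^+$ then has $\theta$ as its highest weight (realized by the root space $\g_\theta$) and $\ov\be$ as its lowest weight: indeed $\be$ is the unique simple root of $\g$ lying in $\p^+$, namely the ``crossed'' node of the finite Dynkin diagram. Since $w_0$ interchanges the highest and the lowest weight of an irreducible $\g^{\bar 0}$-module, $w_0(\theta)=\ov\be$, whence $w_0(\ov\a)=-w_0(\theta)=-\ov\be$. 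The symmetric identity $w_0(\be)=\d-\a$ is then a direct consequence of $w_0^2=\mathrm{Id}$.

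Case (1) requires more care. Here $\g^{\bar 1}$ is irreducible and self-dual as a $\g^{\bar 0}$-module (via the restricted Killing form), so its set of weights is stable under negation. The weight $\ov{\tilde\a}$ is anti-dominant with respect to $\Pi_0$, since the Cartan integers $(\tilde\a,\beta^\vee)$, $\beta\in\Pi_0$, are off-diagonal entries of the affine Cartan matrix and hence non-positive; consequently both $-\ov{\tilde\a}$ and $w_0(\ov{\tilde\a})$ are dominant. Expressing $\ov{\tilde\a}$ in the fundamental-weight basis $\{\omega_\beta\}_{\beta\in\Pi_0}$ of $\g^{\bar 0}$ and using the standard identity $w_0(\omega_\beta)=-\omega_{-w_0\beta}$ on each component of $\Pi_0$, the equality $w_0(\ov{\tilde\a})=-\ov{\tilde\a}$ becomes equivalent to the $-w_0$-invariance of the multiset $\{(\tilde\a,\beta^\vee):\beta\in\Pi_0\}$. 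The main expected obstacle is precisely verifying this invariance; the cleanest route is a case-by-case inspection of the affine Dynkin diagrams listed in Kac's Tables Aff~1--3, showing that the edge profile of $\tilde\a$ with respect to each connected component of $\Pi_0$ is symmetric under the opposition involution of that component.
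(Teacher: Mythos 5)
Your case (2) and the structural dichotomy are essentially sound, and they follow a genuinely different route from the paper: the paper obtains irreducibility from the Garland--Lepowsky decomposition $t^{-1}\otimes\g^{\bar 1}\cong\bigoplus_{\gamma\in\Pi_1}V(-\gamma)$ and then, in the Hermitian case, distinguishes the two possible duality patterns of the two summands, excluding the self-dual pattern by comparing the coefficient of $\a$ in $w_0(\a)$ and in $\d-\a$; you instead normalize $\a=\a_0=\d-\theta$ by a diagram automorphism and read off both extremal weights of $\p^+$ directly ($\theta$ on top, $\ov\be$ at the bottom), which works and is arguably more concrete. (You do outsource the irreducibility statements themselves to Kac's theory, where the paper proves them; since they are standard, that is a citation choice rather than an error.)

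Case (1), however, is not complete as written, and that is where the real content of the proposition sits. Self-duality of $\g^{\bar 1}$ only gives you that the weight multiset is negation-stable; combined with the anti-dominance of $\ov{\tilde\a}$ this shows that $-\ov{\tilde\a}$ and $w_0(\ov{\tilde\a})$ are both dominant weights of the same irreducible module, which does not force them to coincide (an irreducible module generally has many dominant weights). Your reduction of the identity $w_0(\ov{\tilde\a})=-\ov{\tilde\a}$ to the invariance of the Cartan integers $(\tilde\a,\beta^\vee)$, $\beta\in\Pi_0$, under the opposition involution of each component is correct, but you then defer the verification to an inspection of Tables Aff 1--3 over all admissible markings and never perform it; one must, for instance, rule out that $\tilde\a$ ever attaches to a node actually moved by the opposition involution of its component (such as a fork node of a $D$-component of odd rank, or an off-center node of an $A$-component). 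That finite check is true but is exactly the work the proposition is asking for, so leaving it undone is a genuine gap. It closes uniformly, with no tables, once you upgrade ``$\ov{\tilde\a}$ is anti-dominant'' to ``$\ov{\tilde\a}$ is \emph{the lowest weight}'' of the irreducible module $\g^{\bar 1}$: via $\g^{\bar 1}\cong t^{-1}\otimes\g^{\bar 1}=V(-\tilde\a)$ the highest weight is $-\ov{\tilde\a}$, the dual module has highest weight $-w_0(-\ov{\tilde\a})=w_0(\ov{\tilde\a})$, and self-duality gives $w_0(\ov{\tilde\a})=-\ov{\tilde\a}$ at once. This is precisely the paper's argument.
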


\begin{proof} It is well-known that  $t^{-1}\otimes \g^{\bar 1}$ occurs as a submodule of the homology $H_1(\mathfrak u^-)$ where $\mathfrak u^-=\sum_{\a\in(-\Dap)\backslash\D_0}\widehat L(\g,\s)_\a$. By Garland-Lepowsky theorem, this homology  decomposes as  $\oplus_{\a\in\Pi_1}V(-\a)$,
as a sum of irreducible ${(\g^{\bar0}+\C K+\C d)}$-modules, which stay irreducible as $\g^{\bar0}$-modules. It follows that
\begin{equation}\label{dec1}
t^{-1}\otimes \g^{\bar1}=V(-\a), \text{ if } \Pi_1=\{\a\}.
\end{equation}
Moreover, it is clear that $-\a$ occurs as a highest weights of $t^{-1}\otimes \g^{\bar 1}$, for any  $\a\in\Pi_1$, hence,  
\begin{equation}\label{dec2}
t^{-1}\otimes \g^{\bar 1}=V(-\a)\oplus V(-\be), \text{ if } \Pi_1=\{\a,\beta\} \text{ with $\a\ne\be$.}
\end{equation}

Since  $\g^{\bar 1}$ is self-dual as a $\g^{\bar 0}$-module, 
 if  $\Pi_1=\{\a\}$ we obtain that $w_0(\bar \a)=-\bar\a$, hence
$w_0(\a)=w_0(\delta'+\bar \a)=\delta'-\bar\a=2\delta'-\a=k\delta-\a$
(cf. Section \ref{ta}), as claimed.
\par 

Assume $\Pi_1=\{\a, \be\}$. Notice that in this case $k=1$, so that $\delta=2\delta'$ and that $c_\a(\delta)=
c_\be(\delta)=1$ (see Section \ref{ta}).
  We have two cases:
\begin{enumerate}
\item $V(-\a)^*=V(-\a)$ and  $V(-\be)^* = V(-\be),$
\item $V(-\a)^*=V(-\be)$ and  $V(-\be)^* = V(-\a).$
\end{enumerate}

In the first case we have  $w_0(\bar \a)=-\bar\a$, which forces
$w_0(\a)=w_0(\d'+\bar\a)=\d'-\bar\a=\d-\a$ and this is not possible since
 $c_\a(w_0(\a))=c_\a(\a)=1$, while $c_\a(\d-\a)=0$. Hence (2) holds. It follows that 
$w_0(\bar\a)=-\bar\be$ and  $w_0(\bar\be)=-\bar\a$. Therefore, 
$w_0(\a)=\d'-\bar\be=2\d'-\be=\d-\be$ and
$w_0(\be)=\d'-\bar\a=2\d'-\a=\d-\a$.
\end{proof}
\vskip5pt

\begin{prop}\label{piofap} Assume $\mu=\a+k\delta\in \mathcal M_\s$, with $\a\in \Pi_1$. 
Set $\Pi_{0,\a}=\Pi_0\cap\a^\perp$, $W_{0, \a}=W(\Pi_{0,\a}),$
and denote by $w_{0, \a}$ the longest element of $W_{0, \a}$. 
\begin{enumerate} 
\item If $\Pi_1=\{\a\}$, then $\mathcal I_{\gamma,\mu}\ne\emptyset$ if and only if  $\gamma=\a$. Moreover, 
$$\mathcal I_{\a,\mu}=\{s_\a w_{0, \a}w_0\}.$$
\item If $\Pi_1=\{\a,\be\}$, then $\mathcal I_{\gamma,\a+k\d}\ne\emptyset$ if and only if  $\gamma=\be$.
Moreover, 
$$\min \mathcal I_{\be,\a+k\d}=s_\a w_{0, \a}w_0.$$
\end{enumerate}
\end{prop}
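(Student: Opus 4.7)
The plan is to prove both parts in a unified way: first verify that the element $w^\ast:=s_\a w_{0,\a}w_0$ lies in $\mathcal I_{\a,\mu}$ (case (1)) or $\mathcal I_{\be,\mu}$ (case (2)), then show it is the minimum, and finally handle the singleton assertion in case (1).

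I will first check that $w^\ast$ sends $\a$ to $\mu$ in case (1) and $\be$ to $\mu$ in case (2). By Proposition \ref{om}, $w_0(\a)=k\delta-\a$ (case (1)) and $w_0(\be)=\delta-\a$ (case (2)). Since $w_{0,\a}$ belongs to $W_{0,\a}$, it centralizes $\a$ and $\delta$ and therefore fixes both $k\delta-\a$ and $\delta-\a$. Finally $s_\a$ sends these to $\mu=\a+k\delta$. The length identity $\ell(w^\ast)=1+\ell(w_{0,\a}w_0)$ holds because $(w_{0,\a}w_0)^{-1}(\a)\in\Dap$, so $N(w^\ast)=\{\a\}\cup s_\a(N(w_{0,\a}w_0))$. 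To verify $w^\ast\in\Wab$, I will use the explicit description $N(w_0w_{0,\a})=\Dp_0\setminus\Dp(\Pi_{0,\a}')$, where $\Pi_{0,\a}':=-w_0(\Pi_{0,\a})\subseteq\Pi_0$, together with the fact that $W_0$ preserves $\s$-height, to show that every root in $N(w^\ast)$ is real of $\s$-height $1$.

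Next, for any $w\in\mathcal I_{\gamma,\mu}$, Lemma \ref{basic} ensures that $N(w)$ contains exactly one summand of every real positive decomposition $\mu=\beta+\beta'$; since $ht_\s(\mu)=3$ and $N(w)\subseteq\Da_1$, that summand is always the one of $\s$-height $1$. Moreover $\a$ itself lies in $N(w)$, because $w^{-1}(\a)=w^{-1}(\mu)-k\delta=\gamma-k\delta\in-\Dap$. A careful inspection, adapting the biconvex saturation argument of Lemma \ref{minimo} to the present case $\mu\in\Da_3$, will show that the minimal biconvex subset of $\Da_1$ containing these forced roots is exactly $N(w^\ast)$. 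Consequently $N(w^\ast)\subseteq N(w)$ and so $w\geq w^\ast$. Since $w^\ast(\a)=\mu$ in case (1) and $w^\ast(\be)=\mu$ in case (2), and since $w=w^\ast z$ with length addition forces $z$ to map the simple root $\gamma$ to $(w^\ast)^{-1}(\mu)$, this pins $\gamma$ down to $\a$ in case (1) and $\be$ in case (2), proving the non-emptiness criteria and the minimality statement.

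Finally, for the singleton assertion in case (1), any $w>w^\ast$ in $\mathcal I_{\a,\mu}$ would have to add to $N(w^\ast)$ only roots $\rho\in\Da_1$ orthogonal to $\mu$ (by Lemma \ref{basic}, both $\mu+\rho$ and $\mu-\rho$ must fail to lie in $\Dap$). Proposition \ref{om} case (1) asserts that $\g^{\bar1}$ is $\g^{\bar0}$-irreducible, which forces the $\b^{\bar0}$-stable abelian subalgebra corresponding to $w^\ast$ to be the unique maximal one; biconvexity then precludes adjoining any further such $\rho$, forcing $w=w^\ast$. The principal obstacle is the explicit verification in the first step that $s_\a(N(w_{0,\a}w_0))\subseteq\Da_1$, which requires careful tracking of how $s_\a$ acts on roots of $\Dp_0\setminus\Dp(\Pi_{0,\a}')$; the uniqueness argument in case (1) is equally delicate, as it rests critically on the irreducibility input from Proposition \ref{om}.
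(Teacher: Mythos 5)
Your construction of $w^{\ast}=s_\a w_{0,\a}w_0$ and the computation that it sends $\a$ (resp.\ $\be$) to $\mu$ via Proposition \ref{om} is exactly the paper's first step. However, the two places you defer are precisely where the content of the proof lies, and one of your arguments is wrong. First, the verification that $w^{\ast}\in\Wab$ is not a routine ``careful tracking'': the whole point is that for $\gamma\in\Dp_0\setminus\Dp(\Pi_{0,\a})$ one has $(\gamma,\a)<0$ (since every such $\gamma$ has a simple constituent not orthogonal to $\a$) and hence, \emph{because $\a$ is long}, $(\gamma,\a^\vee)=-1$ and $s_\a(\gamma)=\gamma+\a\in\Da_1$. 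Without isolating this you have not proved membership in $\Wab$; note also that the relevant inversion set is $N(w_{0,\a}w_0)=\Dp_0\setminus\Dp(\Pi_{0,\a})$, not $N(w_0w_{0,\a})$ as you wrote. Second, the minimality in case (2) cannot be dispatched by ``adapting the biconvex saturation argument of Lemma \ref{minimo}'': that lemma is tailored to $\mu$ of $\s$-height $2$, where both summands of a decomposition have $\s$-height $1$. Here one must pass to $z=s_\a^{-1}w$, analyze decompositions of $\d-\a$, and show via a $\s$-height count that $N(z)$ contains exactly the summand of $\s$-height $0$ of each such decomposition, which identifies $N(v_\be)\subseteq N(z)$; this argument, and the subsequent step ruling out $\gamma\ne\be$, is the bulk of the paper's proof and is absent from yours.

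The singleton assertion in case (1) is where your argument actually fails. You claim that irreducibility of $\g^{\bar 1}$ as a $\g^{\bar 0}$-module ``forces the $\b^{\bar 0}$-stable abelian subalgebra corresponding to $w^{\ast}$ to be the unique maximal one.'' This implication is false: in the adjoint case $\g=\k\oplus\k$ one has $|\Pi_1|=1$ and $\g^{\bar 1}\cong\k$ is irreducible, yet by Panyushev's theorem (Remark \ref{PS}) there is one maximal abelian ideal for each long simple root of $\k$, so the maximal element is far from unique. What is true (and what must be proved) is only that $\mathcal I_{\a,\a+k\d}$ is a singleton, i.e.\ that no nontrivial $u\in\Wa_\a$ satisfies $w^{\ast}u\in\Wab$; the other maximal elements of $\Wab$ live in the posets $\mathcal I_{\gamma,k\d-\theta_\S}$ and are invisible to your argument. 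The paper obtains the singleton statement from a genuinely different input, namely the alcove-geometric fact (\cite[Lemma 5.11]{IMRN}) that the region $P_\s\setminus D_\s$ consists of a single alcove; you would need either that, or a direct computation showing that $s_\a w_{0,\a}w_0(\eta)$ fails to have $\s$-height $1$ for every candidate $\eta$, neither of which appears in your proposal.
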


\begin{proof} 

Set $x=s_\a w_{0,\a}w_0$. By Proposition \ref{om}, we have that:
\item{(1)} if $\Pi_1=\{\a\}$, then 
$x(\a)=s_\a w_{0,\a}w_0(\a)=s_\a w_{0,\a}(k\delta-\a)=s_\a (k\delta-\a)= k\delta+\a$;
\item {(2)} if $\Pi_1=\{\a, \be\}$, then 
$x(\be)=s_\a w_{0,\a}w_0(\be)=s_\a w_{0,\a}(k\delta-\a)=s_\a (k\delta-\a)= k\delta+\a$.
\par

We prove that $x$ is $\s$-minuscule.
Since  $w_{0,\a}w_0\in W_0$, it is clear that $N(w_{0,\a}w_0)\subseteq \Dp_0$. In fact, we have $N(w_{0,\a}w_0)=\D_0^+\setminus \D(\Pi_{0, \a})$. Since $\a$ is long, for each $\gamma\in \D_0^+\setminus \D(\Pi_{0, \a})$, we have 
$s_\a(\gamma)=\gamma+\a$, hence  $N(x)=\{\a\}\cup s_\a N(w_{0,\a}w_0)\subseteq \Da_1$, as claimed.
\par
So we have proved that 
$x\in \mathcal I_{\a,\a+k\delta}$, if $\Pi_1=\{\a\}$, and $x\in \mathcal I_{\be,\a+k\delta}$, if $\Pi_1=\{\a, \be\}$.
\par
Now we treat separately the two cases.
First, let $\Pi_1=\{\a\}$ and assume that $w\in \mathcal I_{\gamma,\a+k\delta}$, with $\gamma\in \Pia$. Then $N(ws_\gamma)=N(w)\cup\{\a+k\delta\}$, hence, since $w$ is $\sigma$-minuscule, 
$$w(C_1)\subseteq \bigcap_{\eta\in\Pia_0}H^+_\eta\, \setminus\,  
\bigcap_{\eta\in \Phi_\s}H^+_\eta= P_\s\setminus D_\s,$$ 
where we denote by $P_\s$ the polytope $\bigcap_{\eta\in\Pia_0}H^+_\eta$.
But by \cite[Lemma 5.11]{IMRN}, there is exactly one $w\in \Wa$ such that 
$w(C_1)\subseteq P_\s\setminus D_\s$, hence $w=x$, $\gamma=\a$, and 
$\mathcal I_{\a,\a+k\delta}=\{x\}$.
\par
Now we assume $\Pi_1=\{\a, \be\}$, $\gamma\in\Pia$ and  $w\in \mathcal I_{\gamma,\d+\a}$. We will show that $\gamma=\be$ and $x\le w$. 
By Remark \ref{phisigma} (2) both roots in $\Pi_1$ are long; moreover,  
$\delta-\a$ is the highest root of $\D(\Pia\setminus\{\a\})$. For any $\gamma\in 
\Pia\setminus\{\a\}$, let $v_\gamma$ be the element of minimal length that maps 
$\gamma$ to $\delta-\a$. We start proving that $w_{0,\a}w_0=v_\be$. In fact, it is clear that $w_{0,\a}w_0(\be)=\d-\a$, so it suffices then to check that $(w_{0,\a}w_0)^{-1}(\gamma)>0$ for all $\gamma\in \Pia$ such that  $(\a,\gamma)=0$. If $\gamma\in \Pi_{0,\a}$ then $(w_{0,\a}w_0)^{-1}(\gamma)=w_0w_{0,\a}(\gamma)>0$. Moreover, in any case $(w_{0,\a}w_0)^{-1}(\be)>0$,  since $N(w_{0,\a}w_0)\subset\D_0$. Thus we obtain $w_{0,\a}w_0=v_\be$, 
 $x=s_\a v_{\be}$, and $N(x)=\{\a\}\cup s_\a(N(v_{\be}))$.
\par 
Now we consider $w$. Since $w(\gamma)=\d+\a$, we have $w^{-1}(\a)=-\d+\gamma$ hence $\a\in N(w)$. It follows that $w=s_\a z$ with $\ell(w)=1+\ell(z)$. In particular, $N(w)=\{\a\}\cup s_\a(N(z))$. Since $z(\gamma)=\delta-\a$, we have that $N(zs_\gamma)=N(z)\cup\{\d-\a\}$, so the biconvexity of   $N(zs_\gamma)$ implies that 
for any pair $\eta_1,\eta_2\in \Dap$
such that $\eta_1+\eta_2=\d-\a$ exactly one of $\eta_1,\eta_2$ belong to $N(z)$.
Moreover, $\d-\a$ being a long root, for any such pair of roots we have $(\eta_i, (\d-\a)^\vee)=1$, for $i=1, 2$, since  $(\eta_1+\eta_2, (\d-\a)^\vee)=2$
and  $(\eta_i, (\d-\a)^\vee)\leq 1$, for $i=1, 2$. It follows that $s_\a(\eta_i)=\eta_i+\a$ and therefore, that $ht_\s(s_\a(\eta_i))=ht_\s(\eta_i)+1$, 
for $i=1,2$. Now, if $\eta_i\in N(z)$, $s_\a(\eta_i)\in N(w)$, and we obtain that 
$ht_\s(\eta_i)=0$. But $ht_\s(\d-\a)=1$, so that one of the $\eta_i$ has $\s$-height equal to 1 and the other has $\s$-height equal to 0. This implies that 
for any pair $\eta_1,\eta_2\in \Dap$
such that $\eta_1+\eta_2=\d-\a$, $N(z)$ contains exactly the summand  $\eta_i$
having  $\s$-height equal to 0. This must hold in particular when we take $w=x$ and so $z=v_\be$. In this case we clearly obtain that $N(v_\be)$ is exactly the set of the summands of $\s$-height equal to 0 of all the decomposition of $\d-\a$ as a sum of two positive roots. So, for a general $w$, we obtain that
$N(v_\be)\subseteq N(z)$, whence $w_{\be,\a+\d}=s_\a v_{\be}\le s_\a z= w$ as desired.\par 
It remains to prove that $\gamma=\beta$. We have $z=v_\be y$ with $N(z)=N(v_\be)\cup v_\be N(y)$, and $y(\gamma)=\be$. If $\gamma\ne\be$, then $N(y)$ would contain some roots not orthogonal to $\be$, whence
$v_\be N(y)$ contains some root $\eta$ not orthogonal to $\d-\a$, hence to $\a$. 
It follows that $s_\a(\eta)=\eta\pm\a\in N(w)$. But $\eta-\a\not\in N(w)$, being summable to $\a$ that belongs to $N(w)$, hence $s_\a(\eta)=\eta+\a\in N(w)$.
In particular, $ht_\s(\eta)=0$, and $\d-\a-\eta\in \Dap$: this implies that 
$\eta\in N(v_\be)$, a contradiction.
\end{proof}

We sum up the results we have obtained in the following theorem. 
\vskip5pt

If $S$ is a connected subset of the set of simple roots, we denote by $S_\lu$ the set of elements of $S$ of the same length of $\theta_{S}$. It is clear that, with respect to $\D(S)$, $\theta_{S}$ is a long root, therefore $S_\lu$,  is the set of the long roots of $S$, with respect to the subsystem $\D(S)$.
With notation as in Lemma \ref{technical} and Proposition \ref{piofap}, we set 
\begin{equation}\label{wamu}
w_{\a,\mu}=\begin{cases}
w_\a&\text{if $\mu=k\d-\theta_\S$, $\theta_\S$ is of type 1, and $\a\in {A(\S)}_\lu$}\\
sv_\a&\text{if $\mu=k\d-\theta_\S$,  $\theta_\S$ is of type 2, and $\a\in\S_\lu$}\\
s_\be w_{0,\be}w_0 &\text{if $\mu=\be+k\delta$, $\be\in \Pi_1$}
\end{cases}
\end{equation}
and
\begin{equation}
\Pia_{\mu}=\begin{cases}
A(\S)_\lu & \text{if $\mu=k\d-\theta_\S$ and $\theta_\S$ is of type 1}\\
\S_\lu & \text{if $\mu=k\d-\theta_\S$ and  $\theta_\S$ is of type 2}\\
\Pi_1 & \text{if $\mu=\be+k\delta$ and $\{\be\}= \Pi_1$}\\
\Pi_1\setminus \{\be\} & \text{if $\mu=\be+k\delta$, $\be\in \Pi_1$, and $|\Pi_1|=2$}
\end{cases}
\end{equation}
\vskip5pt

\begin{theorem}\label{min} Assume $\mu\in \mathcal M_\s$ and $\a\in \Pia$.
Then $ \mathcal I_{\a,\mu}\ne \emptyset$ if and only if $\a\in \Pia_\mu$.
Moreover,
$$w_{\a,\mu}=\min\, \mathcal I_{\a,\mu}.$$
\end{theorem}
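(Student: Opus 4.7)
The plan is to assemble the theorem case-by-case according to the three shapes of $\mu \in \mathcal M_\s$ that determine $\Pia_\mu$ and $w_{\a,\mu}$ in \eqref{wamu}, by combining Propositions \ref{paola} and \ref{piofap} with the explicit constructions of Lemma \ref{technical}. In each case two things must be checked: the characterization of those $\a\in\Pia$ for which $\mathcal I_{\a,\mu}\ne\emptyset$, and the identification of the minimum with the stated element $w_{\a,\mu}$.

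For $\mu=k\d-\theta_\S$ with $\S|\Pi_0$, I would start from an arbitrary $w\in\mathcal I_{\a,\mu}$ and apply Lemma \ref{minimo} to obtain $u\le w$ in $\mathcal I_{\a,\mu}$ whose inversion set $N(u)$ contains exactly one summand of every decomposition $\mu=\beta+\beta'$ in $\Dap$. If $\theta_\S$ is of type 1, Proposition \ref{hr} identifies $\mu$ as the highest root of the finite root system $\D(A(\S))$; applying Proposition \ref{theta}(6) inside $\D(A(\S))$ forces the simple root $\a$ with $u(\a)=\mu$ to be \emph{long} in $A(\S)$, so $\a\in A(\S)_\lu$, and the minimality clause of Proposition \ref{theta}(3) identifies $u$ with $w_\a$. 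Conversely, Lemma \ref{technical}(1) produces $w_\a\in\Wab$ with $w_\a(\a)=\mu$ for every $\a\in A(\S)_\lu$, yielding both nonemptiness and $w_\a=\min\mathcal I_{\a,\mu}=w_{\a,\mu}$. The type-2 sub-case is entirely parallel: the factorization $u=sv$ in the proof of Proposition \ref{paola}(2), combined with Proposition \ref{theta}(6) applied inside $\D(\S)$, forces $\a\in\S_\lu$ and $v=v_\a$, so $u=sv_\a$, while Lemma \ref{technical}(2) supplies the matching existence statement.

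The remaining case $\mu=\be+k\d$ with $\be\in\Pi_1$ is exactly the content of Proposition \ref{piofap}: parts (1) and (2) cover $|\Pi_1|=1$ and $|\Pi_1|=2$ respectively, yield the characterization $\a\in\Pia_\mu$ (namely $\a=\be$ in the first case, $\a\in\Pi_1\setminus\{\be\}$ in the second), and directly verify that $s_\be w_{0,\be}w_0=\min\mathcal I_{\a,\mu}$. The one delicate point worth flagging is the sharpening from ``$\a\in A(\S)$'' (resp.\ $\a\in\S$) in the statement of Proposition \ref{paola} to ``$\a$ is long'' in the statement of Theorem \ref{min}: this is precisely the word \emph{long} in the conclusion of Proposition \ref{theta}(6); once it is carefully tracked through the proof of Proposition \ref{paola}, the theorem is a repackaging of results already in hand.
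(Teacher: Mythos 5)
Your proposal is correct and follows essentially the same route as the paper, whose proof of Theorem \ref{min} consists precisely of citing Lemma \ref{technical}, Proposition \ref{paola}, and Proposition \ref{piofap}; your extra unpacking (Lemma \ref{minimo}, Proposition \ref{hr}, Proposition \ref{theta}(6)) just retraces the internal proof of Proposition \ref{paola}. Your observation that the ``long'' condition in $\Pia_\mu$ is supplied by Proposition \ref{theta}(6) (and matched on the existence side by the hypothesis $\Vert\a\Vert=\Vert\theta_\S\Vert$ in Lemma \ref{technical}) is exactly the right way to reconcile the statements.
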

\begin{proof} 
The claim follows directly from Lemma \ref{technical}, Proposition \ref{paola}, and Proposition \ref{piofap}.
\end{proof}

\section{The  {poset} structure of $\mathcal I_{\a,\mu}$}
{We now study the poset structure of the sets $\mathcal I_{\a,\mu}$. This study is motivated by  the following result, that shows that the maximal elements of the sets $\mathcal I_{\a,\mu}$ are maximal in the whole poset $\Wab$
except when $\a\in\Pi_1$ and $\mu=k\d-\theta_\S,\,\S|\Pi_0$.} 
\begin{prop}\label{fuori} Suppose $w\in\mathcal I_{\a,\mu}$ and  $v\ge w$ with $v\in\Wab$.   If  $v\notin\mathcal I_{\a,\mu}$, then $\a\in\Pi_1$. In that case, write explicitly 
$\Pi_1=\{\a,\be\}$ (with $\be=\a$ if $|\Pi_1|=1$).
Then  $v\in\mathcal I_{\a,k\d+\be}$.
\end{prop}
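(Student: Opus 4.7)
The plan is to track $v(\a)$ step by step along a reduced path from $w$ to $v$. I will write $v=wu$ with $\ell(v)=\ell(w)+\ell(u)$, fix a reduced expression $u=s_{j_1}\cdots s_{j_r}$, and set $v_t=ws_{j_1}\cdots s_{j_t}$. Since $v(\a)\neq\mu$, there is a least index $t$ with $v_t(\a)\neq\mu$, and the formula
\[
v_t(\a)=v_{t-1}(s_{j_t}(\a))=\mu-\langle\a,\a_{j_t}^\vee\rangle\,\eta,\qquad \eta:=v_{t-1}(\a_{j_t}),
\]
together with the fact that $\eta\in N(v_t)\setminus N(v_{t-1})\subseteq\Da_1$, will immediately rule out $\a_{j_t}=\a$ (which would force $\eta=\mu$, contradicting $ht_\s(\mu)\ge 2$). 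Hence $\a_{j_t}$ is Dynkin-adjacent to $\a$, and setting $c:=-\langle\a,\a_{j_t}^\vee\rangle\ge1$ one finds $v_t(\a)=\mu+c\eta$, a positive real root of $\s$-height $ht_\s(\mu)+c\ge 3$.

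The key structural input I will use is the following consequence of Proposition \ref{Dsigma}: for $v'\in\Wab$ and $\gamma\in\Pia$ with $v'(\gamma)>0$, one has $v'(\gamma)\in\Da_1\cup\Phi_\s$. Indeed, $N(v's_\gamma)=N(v')\cup\{v'(\gamma)\}$; if $v'(\gamma)\notin\Da_1$ then $N(v's_\gamma)\not\subseteq\Da_1$, which by Proposition \ref{Dsigma} is equivalent to $N(v's_\gamma)\cap\Phi_\s\neq\emptyset$, forcing $v'(\gamma)\in\Phi_\s$ since $N(v')\cap\Phi_\s=\emptyset$. Applied to $v_t$ and $\a$, the $\s$-height $\ge 3$ of $v_t(\a)$ excludes membership in $\Da_1$, so $v_t(\a)\in\Phi_\s$; because elements of $\Phi_\s$ have $\s$-height $0$, $2$, or $3$, this will pin down $ht_\s(v_t(\a))=3$ and $v_t(\a)=k\d+\tilde\a$ for a unique $\tilde\a\in\Pi_1$ that is long and non-complex. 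In particular, $ht_\s(\mu)=2$, so $\mu=k\d-\th_\S$ for some $\S|\Pi_0$, and $c=1$. (When instead $\mu=k\d+\gamma_0$ with $\gamma_0\in\Pi_1$, the same $\s$-height count rules out the existence of the index $t$, so the premise $v\notin\mathcal I_{\a,\mu}$ never holds and the conclusion is vacuous in that case.)

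Next I will verify that $v_s(\a)=v_t(\a)$ for every $s\ge t$: the same reasoning excludes $\a_{j_{s+1}}=\a$ (else $v_s(\a_{j_{s+1}})=k\d+\tilde\a\in\Phi_\s$ would enter $N(v_{s+1})\subseteq\Da_1$), and if $\a_{j_{s+1}}$ were Dynkin-adjacent to $\a$ with $c':=-\langle\a,\a_{j_{s+1}}^\vee\rangle>0$, then $v_{s+1}(\a)=v_s(\a)+c'v_s(\a_{j_{s+1}})$ would be a positive root of $\s$-height $3+c'\ge 4$, again violating the key tool for $v_{s+1}$. Hence $\a_{j_{s+1}}\perp\a$ and $v(\a)=k\d+\tilde\a$, i.e.\ $v\in\mathcal I_{\a,k\d+\tilde\a}$. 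Finally, Proposition \ref{piofap} applied to this membership forces $\a\in\Pi_1$: one has $\a=\tilde\a$ if $|\Pi_1|=1$, and $\a$ equals the other element of $\Pi_1$ if $|\Pi_1|=2$; in either case, writing $\Pi_1=\{\a,\be\}$ with $\be=\a$ when $|\Pi_1|=1$, this gives $\tilde\a=\be$ and $v\in\mathcal I_{\a,k\d+\be}$. The main obstacle will be the careful bookkeeping of $\s$-heights; the equivalence between the two descriptions of $\Wab$ supplied by Proposition \ref{Dsigma} is what upgrades the crude bound $ht_\s(v_t(\a))\ge 3$ into the rigid identification $v_t(\a)=k\d+\tilde\a$.
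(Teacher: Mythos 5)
Your proof is correct, and its skeleton coincides with the paper's: both isolate the first step along a reduced path from $w$ to $v$ at which the image of $\a$ changes, compute that the new image is $\mu+c\eta$ with $\eta$ a newly acquired inversion of $\s$-height $1$ and $c\geq 1$, force $ht_\s(\mu)=2$ and $c=1$, and then invoke Proposition \ref{piofap} to place $\a$ in $\Pi_1$. Where you genuinely diverge is in identifying the root $\mu+c\eta$: the paper shows that no simple root of $\Pi_0$ can be subtracted from it (by biconvexity of the relevant inversion set), concludes that it is minimal in its $\s$-height stratum, and then combines Remark \ref{hts} with a further convexity argument to exclude $\s$-heights $\geq 4$; you instead apply the alcove characterization of Proposition \ref{Dsigma}, in the combinatorial form ``$N(x)\subseteq\Da_1$ if and only if $N(x)\cap\Phi_\s=\emptyset$'', to the biconvex set $N(v_ts_\a)$, which lands $v_t(\a)$ directly in the explicit finite list $\Phi_\s$ and pins it down by a height count. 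This is the same device the paper uses in Proposition \ref{max}, and it is arguably cleaner here, since it bypasses the minimal-root analysis entirely. The endgames differ only superficially: the paper reapplies its ``first part'' to $\mu'=k\d+\be$, whose $\s$-height $3$ makes a second jump impossible, whereas you track $v_s(\a)$ explicitly for $s\geq t$ and rule out any further change by the same height count; these are equivalent.
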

\begin{proof}
If $v\notin \mathcal I_{\a,\mu}$, write $v=wxs_\gamma y$ with $wx\in\mathcal I_{\a,\mu}$, $wxs_\gamma\notin \mathcal I_{\a,\mu}$ and $\ell(v)=\ell(w)+\ell(x)+\ell(y)+1$. Then $(\gamma,\a)<0$.
Set $(\a,\gamma^\vee)=-r$ and consider $wxs_\gamma s_\a$. We have 
$$N(wxs_\gamma s_\a)=N(wxs_\gamma)\cup\{wx(\a+r\gamma)\}=N(wxs_\gamma)\cup\{\mu+ rwx(\gamma)\}.$$
Note that $ht_\s(\mu+rwx(\gamma))=ht_\s(\mu)+r$. Since the latter root is not simple, there exists $\eta\in\Pia$ such that $\mu+wx(\gamma)-\eta\in\Dap$. 
Since  $N(wxs_\gamma)\subset\Da_1$ and $N(wxs_\gamma s_\a)$ is convex, we have that $\eta\notin\Pi_0$.   
Hence $\mu+rwx(\gamma)$ is minimal in $\Da_{ht_\s(\mu)+r}$. Now we  use  Remark \ref{hts} about minimal roots. If $ht_\s(\mu)+r=2s$ with $s>1$ then $\mu+rwx(\gamma)=ks\d-\theta_\S$ for some $\S|\Pi_0$. But then, by convexity, $k\d-\theta_\S\in  N(wxs_\gamma)$ which is absurd. If $ht_\s(\mu)+r=2s+1$ with $s>1$ then $\mu+rwx(\gamma)=ks\d+\be$ for some $\be\in\Pi_1$. But then, by convexity, $k\d+\be\in  N(wxs_\gamma)$ which is absurd. Therefore $ht_\s(\mu)=2$ and $r=1$. It follows that there exists $\beta\in\Pi_1$
 such that $\mu+wx(\gamma)=\beta+k\d$.
  In turn, we deduce that  $wxs_\gamma\in \mathcal I_{\a,k\d+\be}$. By Proposition \ref{piofap}, (1), we have $\a\in\Pi_1$ as claimed, and $wxs_\gamma\in \mathcal I_{\a,k\d+\be}$ with $\be=\a$ if $|\Pi_1|=1$ and $\Pi_1=\{\a,\be\}$ otherwise. Since  $v\ge wxs_\gamma\in\mathcal I_{\a,k\d+\be}$ and $ht_\s(k\d+\be)=3$, by the first part of the proof, we have that $v\in \mathcal I_{\a,k\d+\be}$, as claimed.   
\end{proof}

{
We now turn to the description of the poset structure of $\mathcal I_{\a,\mu}$: we will show that it is isomorphic to a poset $G'\backslash G$ for suitable reflection subgroups 
$G,G'$ of $\Wa$.}

 \begin{defi}\label{Pialfa} For $\a\in\Pia$, and $\S|\Pi_0$, we  set 
$$\Pia_\a=\Pia\cap\a^\perp, \quad
\Wa_\a=W(\Pia_\a).$$
\end{defi}

\begin{lemma}\label{orto}
Let  $\mu\in\mathcal M_\s$, $u, v\in \mathcal I_{\a,\mu}$, and $u<v$. Then $v=ux$ with $x\in \Wa_\a$. In particular,
$$ \mathcal I_{\a,\mu}\subseteq w_{\a, \mu}\Wa_\a.$$
\end{lemma}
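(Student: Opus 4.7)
The plan is to fix $x\in \Wa$ with $v=ux$ and $\ell(v)=\ell(u)+\ell(x)$ (such an $x$ exists since $u<v$ in the weak order, by property (5) of \S\ref{is}) and then show that $x\in\Wa_\a$. From $u(\a)=v(\a)=\mu$ it is immediate that $x(\a)=\a$; the core task is to upgrade this to the statement that every element of $N(x)$ is orthogonal to $\a$. Once that is done, a short induction on a reduced decomposition $x=s_{i_1}\cdots s_{i_k}$ finishes the job: the first element of $N(x)$ is $\a_{i_1}$, so $\a_{i_1}\perp\a$ and hence $s_{i_1}$ fixes $\a$; the next element $s_{i_1}(\a_{i_2})\in N(x)$ is $\perp\a$ iff $\a_{i_2}\perp\a$; iterating shows $\a_{i_h}\in\Pia_\a$ for every $h$, whence $x\in W(\Pia_\a)=\Wa_\a$.

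To prove $N(x)\subseteq\a^\perp$, I would argue by contradiction: assume $\gamma\in N(x)$ with $(\gamma,\a)\ne 0$ and set $\beta=u(\gamma)$. The identity $N(v)=N(u)\sqcup u(N(x))$ (property (5) of \S\ref{is}) gives $\beta\in N(v)\setminus N(u)$, and $(\beta,\mu)=(\gamma,\a)\ne 0$. The argument then branches on the sign of $(\beta,\mu^\vee)$. If $(\beta,\mu^\vee)>0$, the $\mu$-string through $\beta$ contains $\beta-\mu$, so $\mu-\beta$ is a root; since $ht_\s(\beta-\mu)=1-ht_\s(\mu)<0$ cannot be the $\s$-height of a positive root, $\mu-\beta\in\Dap$. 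Applying Lemma \ref{basic}(2) to the decomposition $\mu=\beta+(\mu-\beta)$ both for $u$ and for $v$ yields $\mu-\beta\in N(u)\subseteq N(v)$ (from $\beta\notin N(u)$) together with $\mu-\beta\notin N(v)$ (from $\beta\in N(v)$), a contradiction. If instead $(\beta,\mu^\vee)<0$, then $\mu+\beta\in\Dap$ has $\s$-height at least $3$, so does not lie in $N(v)\subseteq \Da_1$; hence $v^{-1}(\mu+\beta)\in\Dap$. But $v^{-1}(\mu)=\a$, and $\beta\in N(v)$ gives $\eta:=-v^{-1}(\beta)\in\Dap$, so $v^{-1}(\mu+\beta)=\a-\eta\in\Dap$ and $\a=\eta+(\a-\eta)$ exhibits the simple root $\a$ as a sum of two nonzero positive roots --- impossible.

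The second assertion follows at once from Theorem \ref{min}: since $w_{\a,\mu}=\min \mathcal I_{\a,\mu}$, every $v\in \mathcal I_{\a,\mu}$ satisfies $w_{\a,\mu}\leq v$, and the first part applied with $u=w_{\a,\mu}$ (or the trivial case $v=w_{\a,\mu}$, taking $x=e$) gives $v\in w_{\a,\mu}\Wa_\a$. I do not foresee any serious obstacle; the only points of care are the $\s$-height bookkeeping that fixes the sign of $\mu-\beta$ in the first case, and checking that the degenerate sub-cases $\eta=0$ (which would force $\beta=0$) and $\eta=\a$ (which would force $\mu+\beta=0$) do not occur --- both being immediate.
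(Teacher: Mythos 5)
Your proof is correct, and its skeleton is the same as the paper's: derive a contradiction from a root of $N(v)$ that is not orthogonal to $\mu$. The difference is in how that root is produced. The paper factors $x=x_1s_\beta x_2$ with $x_1\in\Wa_\a$ the longest initial segment lying in $\Wa_\a$ and $\beta\in\Pia$ not orthogonal to $\a$; since $\beta$ is \emph{simple} and $x_1$ fixes $\a$, the sign is automatic, $(ux_1(\beta),\mu)<0$, so $ux_1(\beta)+\mu$ is a root and Lemma \ref{basic}(1) finishes in one stroke. You instead take an arbitrary $\gamma\in N(x)$ with $(\gamma,\a)\ne 0$, which forces you to split on the sign of $(u(\gamma),\mu^\vee)$: your negative case reproves (directly, via $v^{-1}(\mu+\beta)=\a-\eta$) essentially what Lemma \ref{basic}(1) already gives, and your positive case needs the extra input of Lemma \ref{basic}(2) applied to both $u$ and $v$ — note this is the only place where either argument uses that $u$ itself lies in $\mathcal I_{\a,\mu}$ rather than just $u<v$. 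Both branches of your argument check out (the $\s$-height bookkeeping that makes $\mu-\beta$ positive, and the exclusion of the degenerate subcases, are fine), and your concluding induction showing $N(x)\subseteq\a^\perp\implies x\in\Wa_\a$ is a standard and correct replacement for the paper's choice of $x_1$. The paper's version is shorter because the well-chosen factorization eliminates your first case entirely; yours is slightly more robust in that it never needs to single out a simple reflection.
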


\begin{proof} By assumption, there exists $x\in \Wa$ such that 
$N(v)=N(u)\cup uN(x)$: suppose by contradiction that $x\notin\Wa_\a$. 
Then we may assume $x=x_1s_{\beta}x_2$ with $\ell(x)=\ell(x_1)+\ell(x_2)+1$, $x_1\in\Wa_\a$,  and $\beta\in\Pia$, $\beta\not\perp\a$. Then $N(ux_1)\cup ux_1(\beta)\subseteq N(v)$. But $(\beta,\a)<0$, hence  $(u x_1(\beta),u x_1(\a))=(u x_1(\beta),\mu)<0$, therefore $ux_1(\beta)+\mu$ is a root: this cannot happen
by Lemma \ref{basic} (1).
\end{proof}

By Lemma \ref{orto}, $\mathcal I_{\a,\mu}$ is in bijection,  in a natural way, with a subset of $\Wa_\a$, namely, the subset of all $u\in \Wa_\a$ such that $w_{\a, \mu}u\in 
\mathcal I_{\a,\mu}$. We will show that this subset is a system of minimal coset representatives of $\Wa_\a$ modulo a certain subgroup $\Wa_{\a,\mu}$. This will take the rest of the Section.
\par
We start with giving a combinatorial characterization of the elements $u$ such that 
 $w_{\a, \mu}u\in \mathcal I_{\a,\mu}$.

\begin{defi}\label{Valfa-Bmu}
We set 
\begin{align*}\\
B_\mu&=
\begin{cases}
\{\gamma\in\Pia\mid (\gamma,\theta_\S^\vee)=1\} & \text {if  }\mu=k\d-\theta_\S  \text { and }\theta_\S \text{ is of type 1,}\\
\Pi_1 &  \text{if }\mu=k\d-\theta_\S  \text{ and }\theta_\S  \text{ is of type 2},\\
\{\beta\} &  \text{if }\mu=k\d+\be,\ \be\in \Pi_1,\\
\end{cases} \\\\
V_{\a,\mu}&=
\{w\in\Wa_\a\mid ht_{B_\mu}(\gamma)=1\ \forall\,\gamma\in N(w)\},
\end{align*}
if $B_\mu\ne\emptyset$.  If $B_\mu=\emptyset$, we set $V_{\a,\mu}=\{{1}\}$.
\end{defi}
\vskip5pt

\begin{lemma}\label{Bsigma}
Assume $\S|\Pi_0$, $\mu=k\d-\theta_\S\in \mathcal M_\s$, and set  
$$
B_\S= \{\gamma\in\Pia\mid (\gamma,\theta_\S)>0\}.
$$
Then
\begin{enumerate}
\item 
For  all $\eta\in \Da$, 
\begin{equation}\label{exP}(\eta,\mu^\vee)=ht_\s(\eta)r_\S -ht_{B_\S}(\eta)\varepsilon_\S,\end{equation}
 where  $\varepsilon_\S=2$, if   $|\S|=1$, $\varepsilon_\S=1$, otherwise.
\item
$B_\S=\{\gamma\in\Pia\mid (\gamma,\theta_\S^\vee)=1\}$, unless $|\S|=1$.
\end{enumerate}
\end{lemma}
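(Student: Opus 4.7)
The plan is to reduce both parts to a case analysis of the integers $(\a,\theta_\S^\vee)$ for simple roots $\a\in\Pia$. First I would observe that, since $\theta_\S\in\h_0^*$ and every element of the root lattice is orthogonal to $\d$ (because $(\d,\a_i)=0$ for all $i$), one has $(\mu,\mu)=(\theta_\S,\theta_\S)$ and $(\eta,\mu)=-(\eta,\theta_\S)$ for every $\eta\in\Da$. This gives $(\eta,\mu^\vee)=-(\eta,\theta_\S^\vee)$, reducing part (1) to computing $(\eta,\theta_\S^\vee)$.

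Next I would expand $\eta=\sum_{\a\in\Pia}c_\a(\eta)\a$ and split the sum $\sum_\a c_\a(\eta)(\a,\theta_\S^\vee)$ according to whether $\a\in\Pi_0\setminus\S$, $\a\in\Pi_1$, or $\a\in\S$. The first block contributes nothing because $\a$ and $\theta_\S$ lie in different irreducible components of $\Pi_0$. The second block contributes $-r_\S\cdot ht_\s(\eta)$: indeed, by Remark \ref{rsigma}, the value $(\a,\theta_\S^\vee)=-r_\S$ is independent of the $\a\in\Pi_1$ chosen. For the third block, I would invoke the standard fact that, when $\D(\S)$ is a finite irreducible root system of rank at least $2$ and $\theta_\S$ is its long highest root, one has $(\a,\theta_\S^\vee)\in\{0,1\}$ for every simple $\a\in\S$; this is visible from the list of extended Dynkin diagrams, where $-\theta_\S$ is connected to $\a$ by a simple edge precisely when $(\a,\theta_\S^\vee)=1$. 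In the degenerate case $\S=\{\a\}$ one has instead $(\a,\theta_\S^\vee)=2$. In both regimes the nonzero contributors from $\S$ form exactly the set $B_\S=\{\a\in\S\mid(\a,\theta_\S)>0\}$, since positivity of the pairing and positivity of the Cartan integer are equivalent.

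Assembling the three contributions gives $(\eta,\theta_\S^\vee)=-r_\S\, ht_\s(\eta)+\varepsilon_\S\, ht_{B_\S}(\eta)$, and negating yields the formula claimed in (1). For part (2), I would simply note that the analysis above already shows that, when $|\S|\ge 2$, no $\gamma\in\Pia\setminus\S$ achieves $(\gamma,\theta_\S^\vee)=1$: the value is $0$ for $\gamma\in\Pi_0\setminus\S$ and $-r_\S\in\{-1,-2\}$ for $\gamma\in\Pi_1$. Hence $B_\S=\{\gamma\in\Pia\mid(\gamma,\theta_\S^\vee)=1\}$ in that range. The only delicate point is the uniform bound $(\a,\theta_\S^\vee)\le 1$ for $\a\in\S$ simple with $|\S|\ge 2$; once this is in place—and it is precisely what forces the jump from $\varepsilon_\S=1$ to $\varepsilon_\S=2$ when $\S$ is of type $A_1$—the rest of the lemma is bookkeeping.
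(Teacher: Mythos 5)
Your proof is correct and follows essentially the same route as the paper: both decompose $(\eta,\mu^\vee)=-(\eta,\theta_\S^\vee)$ over the simple roots, kill the contribution of $\Pi_0\setminus\S$, use Remark \ref{rsigma} for the $\Pi_1$ block, and use the longness of $\theta_\S$ to get $(\a,\theta_\S^\vee)\in\{0,1\}$ for $\a\in\S$ when $|\S|\ge 2$ (versus $2$ when $\S=\{\theta_\S\}$). One small aside is off: in the extended Dynkin diagram the node $-\theta_\S$ can be joined to a short simple root by a multiple edge even though $(\a,\theta_\S^\vee)=1$ (e.g.\ type $C_n$), so the correct justification is the one you also give, namely that $\theta_\S$ is long and dominant in $\D(\S)$.
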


\begin{proof}
It is clear that for $\gamma\in \Pia$, we have $(\gamma, \theta_\S)<0$ if and  only if $\gamma\in \Pi_1$; moreover, recall that $r_\S=-(\gamma, \theta_\S^\vee)=(\gamma, \mu^\vee)$ for all $\gamma\in \Pi_1$.  \par
On the other hand, by definition, we have $(\gamma, \theta_\S)>0$ if and only if $\gamma\in B_\S$. Clearly, $B_\S\subseteq \S$, and since $\theta_\S$ is long with respect to $\D(\S)$, it follows that, if 
$\gamma\in B_\S$, then  $(\gamma,  \theta_\S^\vee)=1$ unless $\S=B_\S=\{\theta_\S\}$, 
in which case  $(\gamma,  \theta_\S^\vee)=2$.
Therefore, for any $\eta\in\Da$, 
$$(\eta,\mu^\vee)=\sum\limits_{\gamma\in \Pi_1}c_\gamma(\eta)(\gamma,\mu^\vee)+\sum\limits_{\gamma\in B_\S}c_\gamma(\eta)(\gamma,\mu^\vee)=
ht_\s(\eta)r_\S-ht_{B_\S}(\eta)\varepsilon_\S$$
as wished.\end{proof}

\begin{lemma}\label{Valfa} Assume $\mathcal I_{\a,\mu}\ne\emptyset$.
For any $u\in\Wa$,  $w_{\a,\mu}u\in\mathcal I_{\a,\mu}$ if and only if $u\in\ V_{\a,\mu}$.
\end{lemma}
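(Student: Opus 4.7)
The strategy is to translate the condition $w_{\a,\mu}u\in\mathcal I_{\a,\mu}$ into a condition on $N(u)$, and then match it with the defining property of $V_{\a,\mu}$.

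I would first show that, for $u\in\Wa$, the membership $w_{\a,\mu}u\in\mathcal I_{\a,\mu}$ forces $u\in\Wa_\a$, and that conversely for any $u\in\Wa_\a$ one has $\ell(w_{\a,\mu}u)=\ell(w_{\a,\mu})+\ell(u)$, hence $N(w_{\a,\mu}u)=N(w_{\a,\mu})\sqcup w_{\a,\mu}(N(u))$, and $w_{\a,\mu}u(\a)=w_{\a,\mu}(\a)=\mu$ since $u$ fixes $\a$. The first implication follows from Theorem \ref{min} (which gives $w_{\a,\mu}\le w_{\a,\mu}u$ in the weak order) combined with Lemma \ref{orto}. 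For the converse one uses the characterization \eqref{mcr+} of $w_{\a,\mu}$: being the element of minimal length sending $\a$ to $\mu$, it maps every positive root perpendicular to $\a$ to a positive root perpendicular to $\mu$; since the simple roots of $\Wa_\a$ lie in $\a^\perp$, one has $N(u)\subseteq\a^\perp\cap\Dap$ and thus $w_{\a,\mu}(N(u))\subseteq\Dap$. At this point $w_{\a,\mu}u\in\mathcal I_{\a,\mu}$ is equivalent to $w_{\a,\mu}u\in\Wab$, i.e.\ to $ht_\s(w_{\a,\mu}(\gamma))=1$ for every $\gamma\in N(u)$.

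The core of the proof is then the identity $ht_\s(w_{\a,\mu}(\gamma))=ht_{B_\mu}(\gamma)$ for $\gamma$ in the positive roots $\Dp(\Pia_\a)$ of $\Wa_\a$. Both sides being linear in $\gamma$ and $\Pia_\a$ being a basis of the span of this root system, it is enough to verify the identity on $\beta\in\Pia_\a$. Once established, the condition $ht_\s(w_{\a,\mu}(\gamma))=1$ for all $\gamma\in N(u)$ becomes $ht_{B_\mu}(\gamma)=1$ for all $\gamma\in N(u)$, which is exactly $u\in V_{\a,\mu}$. The verification splits according to the shape of $\mu$. For $\mu=k\d-\theta_\S$ with $\theta_\S$ of type $1$, the argument is purely structural: $w_{\a,\mu}=w_\a\in W(A(\S))$, and since $B_\mu\subseteq\S\setminus A(\S)$, the subgroup $W(A(\S))$ preserves $ht_{B_\mu}$; plugging $(w_\a(\beta),\mu^\vee)=(\beta,\a^\vee)=0$ into Lemma \ref{Bsigma} yields the identity when $|\S|>1$, and forces $ht_\s(w_\a(\beta))\in 2\ZZ$ (never $1$, compatibly with $B_\mu=\emptyset$ and $V_{\a,\mu}=\{1\}$) when $|\S|=1$. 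For $\mu=k\d-\theta_\S$ with $\theta_\S$ of type $2$ one has $B_\mu=\Pi_1$, so $ht_{B_\mu}=ht_\s$; since $v_\a\in W_0$ already preserves $ht_\s$ and $v_\a(\gamma)\perp\theta_\S$ (because $v_\a(\a)=\theta_\S$ and $\gamma\perp\a$), the explicit description of $s$ in Lemma \ref{esse} is used to check that $s$ also preserves $ht_\s$ on $v_\a(\Dp(\Pia_\a))$, both when $k=2$ and in the two $k=1$ instances ($B_n$ and $C_n$). Finally, for $\mu=k\d+\be$ with $\be\in\Pi_1$ one observes that $c_\be$ and $ht_\s$ coincide on $\D(\Pia_\a)$; since $w_0$ and $w_{0,\be}$ preserve $ht_\s$, the identity reduces to $(w_{0,\be}w_0(\beta),\be^\vee)=0$ on $\Pia_\a$, which is verified directly using Proposition \ref{om}.

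The hardest part will be the case-by-case verification of the identity on $\Pia_\a$. The type $1$ case is uniform through Lemma \ref{Bsigma} and the disjointness $B_\mu\cap A(\S)=\emptyset$; the remaining cases are more delicate, since the $k=1$ subcases of type $2$ require working with the non-reflection element $s$ furnished by Lemma \ref{esse}, and the case $\mu=k\d+\be$ involves explicit bookkeeping with the longest element $w_0$.
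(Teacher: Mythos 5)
Your proposal is correct and follows essentially the same route as the paper's proof: reduce to $u\in\Wa_\a$ via Lemma \ref{orto} and the minimality of $w_{\a,\mu}$, so that membership in $\mathcal I_{\a,\mu}$ becomes the condition $ht_\s(w_{\a,\mu}(\gamma))=1$ on $N(u)$, and then identify $ht_\s(w_{\a,\mu}(\gamma))$ with $ht_{B_\mu}(\gamma)$ (up to the factor $\varepsilon_\S$ when $|\S|=1$) case by case through formula \eqref{exP} of Lemma \ref{Bsigma}. The only cosmetic differences are your linearity reduction to $\Pia_\a$ and, in the type-2 case, your use of the explicit form of $s$ from Lemma \ref{esse} where the paper instead combines the two expressions for $(w_{\a,\mu}(\eta),\mu^\vee)$ into a uniform computation valid for all $k$.
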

\begin{proof}
We deal with the three cases that occur in the definition of $B_\mu$ one by one.  We shall use several times relation \eqref{exP} from Lemma
\ref{Bsigma}.
\medskip

\item{\it 1.} $\mu=k\d-\theta_\S$, $\theta_\S$ of type 1.  Then $\a\in A(\Sigma)$,  $\mu$ is the highest root of $A(\Sigma)$, and $w_{\a,\mu}\in W(A(\S))$.  
It is clear that $B_\S\cap A(\Sigma)=\emptyset$, in fact, by Definition \ref{Asigma}, $A(\Sigma)$ is a connected component of $\Pia\setminus B_\S$.
In particular, for all $\eta\in\Da$, $ht_{B_\S}(w_{\a,\mu}(\eta))=
ht_{B_\S}(\eta)$.  Recall that $r_\S$ is the type of $\theta_\S$.
By  \eqref{exP}, 
\begin{align*} 
(w_{\a,\mu}(\eta),\mu^\vee)&=ht_\s(w_{\a,\mu}(\beta))-ht_{B_\S}(w_{\a,\mu}(\eta))\varepsilon_\S=\\
&=ht_\s(w_{\a,\mu}(\beta))-ht_{B_\S}(\eta)\varepsilon_\S.
\end{align*}
Now, assume  $u\in V_{\a,\mu}$. If $u={1}$, obviously 
$w_{\a, \mu}u\in  \mathcal I_{\a, \mu}$. So we may assume $u\ne {\bf 1}$ and $|\S|>1$. If $\eta\in N(u)$, then  $(\eta,\alpha)=0$, so that  $(w_{\a,\mu}(\eta),\mu)=0$; moreover, $ht_{B_\S}(\eta)=\varepsilon_\S=1$. Therefore, by the above identities we obtain that $ht_\s(w_{\a,\mu}(\eta))=1$. 
Thus  $N(w_{\a,\mu}u)=N(w_{\a,\mu})\cup w_{\a,\mu}N(u)\subseteq \Da_1$,  hence $w_{\a,\mu}u\in \Wab$. Since $u(\a)=\a$, we conclude that  $w_{\a,\mu}u\in \mathcal I_{\a, \mu}$. 
\par
Conversely, if $w_{\a,\mu}u\in \mathcal I_{\a, \mu}$ with $u\ne 1$, then, by Lemma \ref{orto}, $u\in \Wa_\a$, so that, if $\eta\in N(u)$, then $(\eta, \a)=0$, hence $(w_{\a,\mu}(\eta),\mu)=0$. Moreover, $ht_\s(w_{\a,\mu}(\eta))=1$. It follows that $\varepsilon_\S=1$ and $ht_{B_\S}(\eta)=1$, so  $ht_{B_\S}(\eta)=ht_{B_\mu}(\eta)=1$, hence  $u\in V_{\a, \mu}$.
\medskip

\item{\it 2.} $\mu=k\d-\theta_\S$, $\theta_\S$ of type 2.  Then $\a\in\Sigma$,  and $w_{\a,\mu}=s v_\a$, where $v_\a$ is the minimal element that maps $\a$ to $\theta_\S$ and $s$ is the minimal element that maps $\theta_\S$ to $\mu$.   We also know that $s$ is an involution.
In this case, $B_\mu=\Pi_1$, 
hence $B_\mu\cap \Sigma=\emptyset$. Thus the $B_\mu$-height 
is the $\s$-height and, since $v_\a\in W(\S)$,   we have that  $v_\a$ preserves the $\s$-height. 
Similarly, since $s\in W(A(\S))$, $s$ preserves the $B_\S$-height.
Therefore, 
for all $\eta\in\Da$, we obtain that
\begin{align*} 
(w_{\a,\mu}(\eta),\mu^\vee)&=( v_\a(\eta),s\mu^\vee)
=( v_\a(\eta),\theta_\S^\vee)=-( v_\a(\eta),\mu^\vee)\\
&=-2 ht_\s(v_{\a}(\eta))+ht_{B_\S}(v_{\a}(\eta))\varepsilon_\S\\
&=-2 ht_\s(\eta)+ht_{B_\S}(v_{\a}(\eta))\varepsilon_\S,
\end{align*}
and also that
\begin{align*}
(w_{\a,\mu}(\eta),\mu^\vee)
&=2 ht_\s(w_{\a,\mu}(\eta))-ht_{B_\S}(w_{\a,\mu}(\eta))\varepsilon_\S\\
&=2 ht_\s(w_{\a,\mu}(\eta))-ht_{B_\S}(v_\a(\eta))\varepsilon_\S.
\end{align*}
In particular, if $(\mu^\vee,w_{\a,\mu}(\eta))=0$, then $ ht_\s(w_{\a,\mu}(\eta))=ht_\s(\eta)=ht_{B_\mu}(\eta).$ By Lemma \ref{orto}, this directly implies that $w_{\a,\mu}u\in \mathcal I_{\a, \mu}$ if and only if $u\in V_{\a,\mu}$. 
\medskip

\item{\it 3.} $\mu=k\d+\be,\ \be\in \Pi_1$.
If $|\Pi_1|=1$, then $V_{\a,\mu}=\{{1}\}$ and, by
Proposition \ref{piofap}, $\mathcal I_{\a,\mu}=\{w_{\a,\mu}\}$. So we may assume $|\Pi_1|=2$, $\Pi_1=\{\a,\be\}$. Then, with notation as in Proposition \ref{piofap},  we have that  $w_{\a,\mu}=s_\be v_{\be}$. Since $v_{\be}(\a)=\d-\be$, we deduce that $v_{\be}^{-1}(\be)=\d-\a$, hence, if $(\gamma,\a)=0$, then
$$
s_\be v_{\be}(\gamma)=v_{\be}(\gamma)-(v_{\be}(\gamma),\be^\vee)\be=v_{\be}(\gamma)-(\gamma,\d-\a^\vee)\be=v_{\be}(\gamma).
$$
It follows that, if $\gamma\in\Dap_\a$, then  $ht_{\s}(w_{\a,\mu}(\gamma))=ht_\s(v_{\be}(\gamma))=c_{\be}(\gamma)=ht_{B_\mu}(\gamma)$ and we can argue as in case 2.
\end{proof}

\begin{lemma}\label{M2} 
Assume $\a\in \Pia$, $\mu\in \mathcal M_\s$, 
$\mathcal I_{\a,\mu}\ne\emptyset$, $B_\mu\ne\emptyset$,
and set 
$$\D^2_{\a,\mu}=\{\gamma\in\D(\Pia_\a)\mid  ht_{B_\mu}(\gamma)\geq 2 \}.$$
Then 
$\D^2_{\a,\mu}\ne\emptyset$ if and only if 
$\mu=k\d-\theta_\S$ with $\theta_\S$ of type $1$ and  $|\S|>1$, and
$\a\in A(\S)\setminus (\S\cup\Pi_1)$.
In this case, $\theta_\S$ is the minimal element in  $\D^2_{\a,\mu}$, with respect to the usual root order. Moreover,  $ht_{B_\mu}(\th_\S)=ht_{B_\S}(\th_\S)=2$.
\end{lemma}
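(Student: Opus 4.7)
The plan is to proceed by case analysis on $\mu \in \mathcal M_\s$, matching the three alternatives in the definition of $B_\mu$.

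I would first rule out the two cases where $\mu$ is not of the required form. If $\mu = k\d + \be$ with $\be \in \Pi_1$, then $B_\mu = \{\be\}$ and $ht_{B_\mu}(\gamma) = c_\be(\gamma)$; the constraint $k\sum_i s_ia_i = 2$ with $s_\be = 1$ forces $a_\be \leq 2/k$, and combined with the constraints on $\a$ imposed by $\mathcal I_{\a,\mu}\ne\emptyset$ via Proposition~\ref{piofap} --- which either removes $\be$ from $\Pia_\a$ (when $|\Pi_1|=1$) or gives $c_\be(\gamma) \leq 1$ (when $|\Pi_1|=2$) --- yields $\D^2_{\a,\mu} = \emptyset$. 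A similar coefficient bound on $ht_\s(\gamma)=ht_{\Pi_1}(\gamma)$ would handle $\mu = k\d - \theta_\S$ with $\theta_\S$ of type~2 (so $B_\mu = \Pi_1$ and $\a\in\S$ by Proposition~\ref{paola}~(2)): in each of the situations listed in Remark~\ref{rsigma} either some $\tilde\a\in\Pi_1$ is adjacent to $\a$ and thus absent from $\Pia_\a$, or the bound $c_{\tilde\a}(\d)\le1$ applies. Finally, for $\mu = k\d - \theta_\S$ of type~1, Lemma~\ref{Bsigma}~(2) shows $B_\mu = \emptyset$ when $|\S|=1$, contradicting our hypothesis $B_\mu\ne\emptyset$; hence $|\S|>1$ and $B_\mu = B_\S$.

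Next I would pin down the admissible $\a$'s in the surviving case. If $\a\in\S$, then $\a\notin\Pia_\a$ and at least the simple roots of $\S$ adjacent to $\a$ (including, whenever $\a$ is in or adjacent to $B_\S$, the relevant element of $B_\S$ itself) fall outside $\Pia_\a$, so a direct inspection of the finite type of $\D(\S)$ shows that no positive root of $\D(\Pia_\a)$ can attain $B_\S$-height $2$. If $\a\in\Pi_1$, the structure of $A(\S)$ (forced by Proposition~\ref{paola}~(1)) leads to the same conclusion. What remains is $\a\in A(\S)\setminus(\S\cup\Pi_1)$, which by Definition~\ref{Asigma} forces $\a\in\Pi_0\setminus\S$ to lie in a different $\Pi_0$-component from $\S$; hence $\a\perp\S$, $\S\subseteq\Pia_\a$ and $\theta_\S\in\D^+(\Pia_\a)$.

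For the ``moreover'' clause, $ht_{B_\mu}(\theta_\S)=ht_{B_\S}(\theta_\S)$ is immediate from $B_\mu = B_\S$, and $ht_{B_\S}(\theta_\S)=2$ would follow by expanding $2=(\theta_\S,\theta_\S^\vee)=\sum_\be c_\be(\theta_\S)(\be,\theta_\S^\vee)$ and using $(\be,\theta_\S^\vee)=1$ for $\be\in B_\S$ (Lemma~\ref{Bsigma}~(2), valid since $|\S|>1$) together with $(\be,\theta_\S^\vee)\le0$ for $\be\in\S\setminus B_\S$. For the minimality of $\theta_\S$ I would suppose $\gamma\in\D^+(\Pia_\a)$ satisfies $\gamma<\theta_\S$; then $c_\be(\gamma)\le c_\be(\theta_\S)$ for every $\be\in\Pia$ with strict inequality at some $\be^*$, which gives $ht_{B_\S}(\gamma)\le 2$. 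In the equality case one must have $c_\be(\gamma)=c_\be(\theta_\S)$ on $B_\S$ and $\be^*\in\S\setminus B_\S$ (since $c_\be(\theta_\S)=0$ off $\S$ would force $c_{\be^*}(\gamma)<0$, impossible for a positive root), so $\gamma\in\D^+(\S)$; then formula~\eqref{exP} yields $(\gamma,\theta_\S^\vee)=ht_{B_\S}(\gamma)=2$, which forces $\gamma=\theta_\S$ in the finite root system $\D(\S)$ with $\theta_\S$ long, a contradiction.

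The main obstacle will be the ``only if'' direction in the subcases $\a\in\S$ and $\a\in\Pi_1$ of the type-1 analysis: one has to systematically exclude the existence of a $B_\S$-height $\ge2$ positive root in $\D(\Pia_\a)$ when $\a$ interacts with the support of $\theta_\S$. A fully uniform argument seems elusive, and a case-by-case inspection tracking which elements of $B_\S$ survive in $\Pia_\a$ and how they can combine into connected supports of roots is likely the most efficient route.
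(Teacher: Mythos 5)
Your overall strategy (case analysis on the three forms of $B_\mu$, reduction of the type~2 and $\mu=k\d+\be$ cases to the bound $ht_{B_\mu}(\gamma)\le 1$ for $\gamma\in\D(\Pia_\a)$, and the computation $ht_{B_\S}(\th_\S)=2$ from $(\th_\S,\th_\S^\vee)=2$) matches the paper's, and those parts are essentially fine, if sketchy in the type~2 case. But there is a genuine gap exactly where you flag ``the main obstacle'', and the paper's way around it is the one idea your proposal is missing: in the type~1 case one proves \emph{first} that every $\gamma\in\D(\Pia_\a)$ with $ht_{B_\S}(\gamma)\ge 2$ satisfies $\gamma\ge\th_\S$ in the root order (splitting according to $ht_\s(\gamma)=0,1,2$; the bound $ht_\s(\gamma)\le 2$ comes from $k\d-\gamma$ being a positive root since $c_\a(k\d-\gamma)>0$). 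This single claim disposes of both remaining issues at once: if $\a\in\S\cup\Pi_1$ then $\th_\S\notin\D(\Pia_\a)$, so any $\gamma\in\D^2_{\a,\mu}$ would satisfy $\gamma>\th_\S$ and hence $c_\a(\gamma)>0$, contradicting $\gamma\in\D(\Pia_\a)$; and the minimality statement is immediate.

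Two concrete problems with your version. First, your exclusion of $\a\in\S$ by ``direct inspection of the finite type of $\D(\S)$'' only examines roots of $\D(\Pia_\a)$ lying in $\D(\S)$; but $\D(\Pia_\a)$ also contains roots of positive $\s$-height (roots of the form $k\d-\eta$, or roots whose support crosses $\Pi_1$ into other components of $\Pi_0$), and these can perfectly well meet $B_\S$ --- they are precisely the roots that the $\gamma\ge\th_\S$ reduction is designed to control. The same objection applies to the subcase $\a\in\Pi_1$, which you leave entirely to ``the structure of $A(\S)$''. Second, your minimality argument only shows that no $\gamma\in\D^2_{\a,\mu}$ is strictly \emph{below} $\th_\S$; since the root order is only a partial order, this does not show that $\th_\S$ is the minimum, i.e.\ that every $\gamma\in\D^2_{\a,\mu}$ dominates $\th_\S$. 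The stronger statement is what is actually needed downstream (in the proofs of Theorem~\ref{slamu} and Lemma~\ref{coppie} one uses that any root of $B_\mu$-height $2$ in $\D(\Pia_\a)$ equals $\th_\S$ plus a sum of positive roots of $B_\mu$-height $0$), so establishing only weak minimality would not suffice.
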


\begin{proof}
We deal with the three cases that occur in the definition of $B_\mu$ one by one.
\medskip

\item{\it 1.} $\mu=k\d-\theta_\S$, $\theta_\S$ of type 1.  Then $\a\in A(\Sigma)$ and $|\S|>1$, since we are assuming $B_\mu\ne\emptyset$. In particular $B_\S=B_\mu=\{\gamma\in\Pia\mid(\gamma,\theta_\S^\vee)=1\}$.

We first prove that if  $\gamma\in\D(\Pia_\a)$ and $ht_{B_\mu}(\gamma)\geq 2$ then $\gamma\ge\theta_\S$. We notice  $(\be,\theta_{\S}^\vee)\in\{0,1\}$ for any $\be\in\Dp(\S)\backslash\{\theta_{\S}\}$, therefore, since $(\theta_{\S},\theta_{\S}^\vee)=2$, $ht_{B_{\mu}}(\th_\S)=2$ and $\th_\S$ is the unique root in $\D(\S)$ with this property. It follows that we can assume
$\gamma\notin\D(\S)$, so that $ht_\s(\gamma)>0$. 
 Since
 $c_\a(k\d-\gamma)>0$, we have that $k\d-\gamma$ is a positive root,
hence
 $ht_\s(\gamma)\le 2$. If $ht_\s(\gamma)=1$, then $(\gamma,\th_{\S}^\vee)=1$, hence 
 $\gamma-\th_{\S}$ is a root, which can't be negative, since $\gamma$ is supported also outside $\S$. So it is positive, hence $\gamma\geq\th_{\S}$. Suppose now $ht_\s(\gamma)=2$.  Then $k\d-\gamma\in\D_0$, hence it should belong to  the component $\S'$ of $\Pi_0$ to which $\a$ belongs, since $c_\a(k\d-\gamma)>0$. Thus $\gamma=k\d-\be$ with $\be\in \S'$. If $\S=\S'$, then $\a\in\Gamma(\S)$. Let $Z$ be the component of $\Gamma(\S)$ containing $\a$. Let $\eta\in\Pi_1$ be a root such that $(\eta,\theta_Z)<0$. We have that $\eta+\theta_Z+\theta_\S$ is a root, so $k\d-\eta-\theta_\S-\theta_Z$ is a root, which is positive since its $\s$-height is $1$. It follows that $k\d\ge \theta_\S+\theta_Z$, hence $\gamma\ge \theta_\S-\be+\theta_Z$. But then $c_\a(\gamma)>0$, which is impossible. We have therefore $\S'\ne\S$. But then $\gamma=k\d-\be$ with $\be\not\in\S$, so, clearly, $\gamma\geq \th_{\S}$.
 
 It remains only to check that $\D^2_{\a,\mu}\ne\emptyset$ if and only if 
$\a\in A(\S)\setminus (\S\cup\Pi_1)$. If $\a\in A(\S)\setminus (\S\cup\Pi_1)$ then $\theta_\S\in \D(\Pia_\a)$, hence $\theta_\S\in\D^2_{\a,\mu}$. Assume now  $\gamma\in\D^2_{\a,\mu}$. If $\a\in \Pi_1\cup\S$, then $\theta_\S\not\in \D(\Pia_\a)$, hence $\gamma>\theta_\S$. This is absurd since it implies $c_\a(\gamma)>0$.
\medskip

\item{\it 2.} $\mu=k\d-\theta_\S$, $\theta_\S$ of type 2.  Then $\a\in\S$ and $B_\mu=\Pi_1$, so that the $B_\mu$-height is the $\s$-height. We shall prove that, if $\gamma\in\D(\Pia_\a)$, then $ht_\s(\gamma)\leq 1$. \par
Consider first the case $k=2$. Assume  $\gamma\in\D(\Pia_\a)$.
Notice that, if $\d-\gamma$ is a root, then it is positive, since then
$c_\a(\d-\gamma)>0$. Since $ht_\s(\d)=1$, this implies that 
$ht_\s(\gamma)\leq 1.$ Now, assume by contradiction that 
$ht_\s(\gamma)> 1$.
Since, in any case, $2\d-\gamma\in\Dap$, we obtain that $ht_\s(\gamma)=2$ and $2\d-\gamma\in\Dp_0$. In turn, this implies that $2\d-\gamma\in\D(\S)$, since $c_\a(2\d-\gamma)>0$, and $\a\in \S$. Thus, since $\theta_\S$ is of type 2, also $2\d-\gamma$ is of type 2. But this implies that $\d-\gamma$ is a root, hence that $ht_\s(\gamma)\leq 1$: a contradiction.
\par
Next, consider the case $k=1$. In case $B_n$, we have $\S=\{\a_n\}$ and $\Pi_1=\{\a_{n-1}\}$, so $\a=\a_n$ and  and $ht_\s(\gamma)=0$ for all
$\gamma\in \D(\Pia_\a)$. In case $C_n$, we have $\S=\{\a_1, \dots, \a_{n-1}\}$ and $\Pi_1=\{\a_0, \a_n\}$, so it is clear that for all $\a\in\S$,  and for all $\gamma\in \D(\Pia_\a)$, $ht_\s(\gamma)\leq 1$.
\par
\medskip
\item{\it 3.} $\mu=k\d+\be,\ \be\in \Pi_1$. In this case $\a\in \Pi_1$ and 
$B_\mu\subseteq \Pi_1$, so it is clear that, if $\Pi_1=\{\a\}$, then 
 $ht_\s(\gamma)=0$ for all $\gamma\in \D(\Pia_\a)$. If  $\Pi_1=\{\a, \be\}$, we obtain in any case that 
 $ht_\s(\gamma)\leq 1$ for all $\gamma\in \D(\Pia_\a)$.
\end{proof}

\begin{defi}\label{Walfa}
Given $\a\in \Pia$ and $\mu\in\mathcal M_\s$ such that $\mathcal I_{\a,\mu}\ne\emptyset$, we set
$$\Pia_{\a, \mu}=\Pia_\a\setminus B_\mu,$$
$$\quad \Pia_{\a, \mu}^*=
\begin{cases}
\Pia_{\a, \mu}\cup\{\th_\S\}
&\text{if  $\mu=k\d-\theta_\S, \theta_\S$ of type 1, $|\S|>1$},
\\ &\text{$\a\in A(\S)\setminus (\S\cup\Pi_1)$,}\\
\Pia_{\a, \mu} 
&\text{in all other cases;}
\end{cases}
$$
$$
\Wa_{\a, \mu}=W(\Pia_{\a, \mu}^*).
$$
\end{defi}

The main results of this Section is the following statement.  Recall that we identify a coset space with the set of minimal length coset representatives.

\begin{theorem}\label{slamu} Let $\a\in\Pia$ and $\mu\in\mathcal M_\s$ be such that $\mathcal I_{\a,\mu}\ne\emptyset$. Then the map $u\mapsto w_{\a,\mu}u$ is  a poset isomorphism between $\mathcal I_{\a,\mu}$ and  $\Wa_{\a,\mu}\backslash \Wa_{\a}$.
\end{theorem}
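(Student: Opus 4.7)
The proof proceeds in three reductions. First, I would establish length-additivity of left multiplication by $w_{\a,\mu}$ on $\Wa_\a$. The key input is that $w_{\a,\mu}$ is of minimal length in $\Wa$ among the elements sending $\a$ to $\mu$: this is built into Lemma~\ref{technical} when $\mu = k\delta - \theta_\S$, and for $\mu = \beta + k\delta$ it follows from the explicit computation of $N(w_{\a,\mu})$ given in the proof of Proposition~\ref{piofap}. Characterization~\eqref{mcr+} applied to $w_{\a,\mu}^{-1}$ then yields $w_{\a,\mu}(\gamma) \in \Dap$ for every $\gamma \in \Dap \cap \a^\perp$, in particular for $\gamma \in \D(\Pia_\a)^+$. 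Hence, for $u \in \Wa_\a$, we have $N(w_{\a,\mu} u) = N(w_{\a,\mu}) \sqcup w_{\a,\mu}(N(u))$ and $\ell(w_{\a,\mu}u) = \ell(w_{\a,\mu}) + \ell(u)$. Combined with Lemmas~\ref{orto} and~\ref{Valfa}, this shows that $u \mapsto w_{\a,\mu} u$ restricts to a bijection $V_{\a,\mu} \to \mathcal I_{\a,\mu}$; moreover the equivalence $N(w_{\a,\mu} u_1) \subseteq N(w_{\a,\mu} u_2) \Leftrightarrow N(u_1) \subseteq N(u_2)$ will make it a poset isomorphism once we identify $V_{\a,\mu}$ with $\Wa_{\a,\mu}\backslash \Wa_\a$, regarded as the set of minimal right coset representatives.

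The remaining task is to prove $V_{\a,\mu} = \Wa_{\a,\mu}\backslash \Wa_\a$ as subsets of $\Wa_\a$, and it splits according to Definition~\ref{Walfa}. In the non-exceptional case, $\Wa_{\a,\mu} = W(\Pia_{\a,\mu})$ is standard parabolic in $\Wa_\a$, so by Subsection~\ref{posetparabolic} its minimal representatives are those $u$ with $L(u) \cap \Pia_{\a,\mu} = \emptyset$. Using the bound $ht_{B_\mu}(\gamma) \leq 1$ for $\gamma \in \D(\Pia_\a)^+$ (Lemma~\ref{M2}) together with the standard parabolic factorization $u = u'u''$, in which $N(u) \cap \D(\Pia_{\a,\mu})^+ = N(u')$, one verifies that this condition is equivalent to $N(u) \subseteq \{ht_{B_\mu} = 1\}$, that is, to $u \in V_{\a,\mu}$.

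The exceptional case, in which $\Pia_{\a,\mu}^* = \Pia_{\a,\mu} \cup \{\theta_\S\}$ and $\Wa_{\a,\mu}$ is no longer standard parabolic, is the main obstacle. The key observation is that $\Wa_{\a,\mu}$ preserves the parity of $ht_{B_\mu}$: the reflections in $\Pia_{\a,\mu}$ leave $ht_{B_\mu}$ invariant, and $s_{\theta_\S}$ shifts it by a multiple of $ht_{B_\mu}(\theta_\S) = 2$. Hence the root system $R'$ of $\Wa_{\a,\mu}$ is contained in the even-$ht_{B_\mu}$ part of $\D(\Pia_\a)$, and no root with $ht_{B_\mu}=1$ can lie in $R'$. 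On the other hand every $\beta \in N(s_{\theta_\S}) \setminus \{\theta_\S\}$ is a positive root of $\D(\S)$ with $(\beta, \theta_\S^\vee) = 1$, and Lemma~\ref{Bsigma} forces $ht_{B_\S}(\beta) = 1$; such $\beta$ therefore cannot belong to $R'$. Thus $N(s_{\theta_\S}) \cap R' = \{\theta_\S\}$, and Deodhar's criterion places $\theta_\S$ in the simple system $\Pi_{R'}$. Combined with the trivial inclusion $\Pia_{\a,\mu} \subseteq \Pi_{R'}$ and the linear independence of $\Pia_{\a,\mu}^*$, a rank count gives $\Pi_{R'} = \Pia_{\a,\mu}^*$, so $u$ is a minimal coset representative precisely when $L(u) \cap \Pia_{\a,\mu} = \emptyset$ and $\theta_\S \notin N(u)$.

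It remains to show that these two conditions are equivalent to $u \in V_{\a,\mu}$. The ``only if'' direction is immediate since every element of $\Pia_{\a,\mu}$ has $ht_{B_\mu} = 0$ and $\theta_\S$ has $ht_{B_\mu} = 2$. For the converse, the parabolic argument applied to $W(\Pia_{\a,\mu}) \subseteq \Wa_\a$ rules out roots with $ht_{B_\mu} = 0$ in $N(u)$. To rule out roots with $ht_{B_\mu} = 2$, I would induct on height: by Lemma~\ref{M2} any such $\gamma \ne \theta_\S$ lies above $\theta_\S$ in root order, and one can write $\gamma = \theta_\S + \eta_1 + \cdots + \eta_r$ with each partial sum a positive root and each $\eta_i \in \Pia_{\a,\mu}$ (since $ht_{B_\mu}(\gamma - \theta_\S) = 0$). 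If $\gamma \in N(u)$, biconvexity applied to $\gamma = (\gamma - \eta_r) + \eta_r$, together with $\eta_r \in \Pia_{\a,\mu} \cap L(u) = \emptyset$, forces $\gamma - \eta_r \in N(u)$; iterating eventually produces $\theta_\S \in N(u)$, contradicting $\theta_\S \notin N(u)$. The biconvexity induction, together with the identification $\Pi_{R'} = \Pia_{\a,\mu}^*$, is the hardest step.
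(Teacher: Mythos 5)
Your proof follows the same route as the paper's: after reducing via Lemmas \ref{orto} and \ref{Valfa} to the identity $V_{\a,\mu}=\Wa_{\a,\mu}\backslash\Wa_{\a}$, you obtain the forward inclusion from the values of $ht_{B_\mu}$ on $\Pia_{\a,\mu}^*$ and the reverse inclusion by combining Lemma \ref{M2} with biconvexity (decomposing a root of $B_\mu$-height $2$ as $\th_\S$ plus roots of $B_\mu$-height $0$), exactly as in the paper. Your explicit identification of the canonical simple system $\Pi_{R'}$ with $\Pia_{\a,\mu}^*$ via Deodhar's criterion spells out a point the paper leaves implicit; only the inference from ``$\Wa_{\a,\mu}$ preserves the parity of $ht_{B_\mu}$'' to ``every root of $R'$ has even $ht_{B_\mu}$'' would merit one more line, but the conclusion is correct and the argument is essentially that of the paper.
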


\begin{proof}
By Lemma \ref{Valfa}, we have only to prove that
$\Wa_{\a,\mu}\backslash \Wa_{\a}=V_{\a,\mu}.$
\par
Let $u\in V_{\a,\mu}$, $u\ne\bf 1$. To prove that $u\in\Wa_{\a,\mu}\backslash \Wa_{\a}$ we have to show that if $\beta\in \Pia_{\a,\mu}^*$,  then $u^{-1}(\beta)\in\Dap$: this is immediate from the definitions, since $ht_{B_\mu}(\be)\in \{0, 2\}$, while, for all $\gamma\in N(u)$, $ht_{B_\mu}(\gamma)=1$.
\par
Conversely, assume $u\in \Wa_{\a,\mu}\backslash \Wa_{\a}$, $u\ne\bf 1$, 
and $\gamma\in N(u)$. If, by contradiction, $ht_{B_\mu}(\gamma)=0$, then, by the biconvexity property of $N(u)$, we obtain that there exists some $\beta\in 
(\Pia_\a\setminus B_\mu)\cap N(u)$: this contradicts the definition of 
$\Wa_{\a,\mu}\backslash \Wa_{\a}$. Therefore, $ht_{B_\mu}(\gamma)>0$. 
By Lemma \ref{M2}, this implies that $ht_{B_\mu}(\gamma)=1$ in all cases except when 
$\mu=k\d-\th_\S$, with $\th_\S$ of type $1$ and $|\S|>1$. 
It remains to prove that also in this case $ht_{B_\mu}(\gamma)=1$.
First, we observe that, by Lemma \ref{supp}, $ht_{B_\S}(k\d-\th_\S)=0$: it follows that  $ht_{B_\S}(k\d)=ht_{B_\S}(\th_\S)$ and, by Lemma \ref{M2}, 
that $ht_{B_\mu}(k\d)=2$. Hence, $ht_{B_\mu}(\gamma)\leq 2$, since 
$k\d-\gamma$ is a positive root. Now, if we assume, by contradiction, that $ht_{B_\mu}(\gamma)=2$, then by Lemma \ref{M2}, we obtain that 
$\gamma$ is equal to $\th_\S$ plus a, possibly empty, sum of positive roots with null $B_\mu$-height. By the biconvexity of $N(u)$, this implies that some root in  $(\Pia_\a\setminus B_\mu)\cup\{\th_\S\}$ belongs to $N(u)$, in contradiction with the definition of 
$\Wa_{\a,\mu}\backslash \Wa_{\a}$. 
\end{proof}
\vskip5pt

\section{Intersections among $\mathcal I_{\a,\mu}$'s}\label{cap}
Our goal in this Section is the proof of the following Theorem.
\begin{theorem}\label{minimax} \par\noindent
(1). If $\a\ne \be$, then $\mathcal I_{\a,\mu}\cap \mathcal I_{\be,\mu'}\ne\emptyset$ if and only if $\mu=k\d-\theta_\S$, $\mu'=k\d-\theta_\S'$ with $\S\ne\S'$, $\a\in\S'$, $\beta\in\S$,  and $\a,\beta,\theta_\S,\theta_{\S'}$ all of type 1.
 \par\noindent
(2). If $\mathcal I_{\a,\mu}\cap \mathcal I_{\be,\mu'}\ne\emptyset$, then $\sup\{\min \mathcal I_{\a,\mu}, \min \mathcal I_{\beta,\mu'}\}$ exists. Denoting it by $w_{\a,\beta}$, we have that 
 $$\mathcal I_{\a,\mu}\cap \mathcal I_{\beta,\mu'}\cong W((\Pia_\a\cap\Pia_\be)\setminus\Pi_1)\backslash W(\Pia_\a\cap\Pia_\be),$$ the isomorphism being $u\mapsto w_{\a,\beta}u,\,u\in W((\Pia_\a\cap\Pia_\be)\setminus\Pi_1)\backslash W(\Pia_\a\cap\Pia_\be)$.\end{theorem}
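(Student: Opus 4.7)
For part (1), forward direction, the key identity is
\[
(\mu,\mu')=(w(\a),w(\be))=(\a,\be)\le 0,
\]
since $\a\ne \be$ are distinct simple roots of $\Pia$. I will perform a case analysis on the shapes of $\mu,\mu'\in\mathcal M_\s$. If some element, say $\mu$, has the form $\gamma+k\d$ with $\gamma\in\Pi_1$, Proposition \ref{piofap} pins down the minimum of $\mathcal I_{\a,\mu}$ as $s_\gamma w_{0,\gamma}w_0$, whose inversion set is described explicitly in the proof given there. Invoking Lemma \ref{basic}(1) applied to $\mu'$ (no sum of $\mu'$ with a root of $N(w)$ can be a root), I will show that such a $w$ cannot send a second simple root into $\mathcal M_\s$, yielding a contradiction. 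Hence both $\mu,\mu'$ are of the form $k\d-\theta_\S$; the case $\S=\S'$ gives $\a=\be$, so $\S\ne\S'$, and then the computation $(\mu,\mu')=0$ forces $\a\perp\be$.

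By Theorem \ref{min}, if $\theta_\S$ were of type $2$ then $\a\in \S_\lu$, and symmetrically for $\theta_{\S'}$. Since $\S\cap\S'=\emptyset$, at most one type can be $2$; the mixed case will be excluded by exploiting the explicit description
\[
N(s)=\{\gamma\in\Dap_1\mid (\gamma,\theta_\S^\vee)=-2\}
\]
from Lemma \ref{esse}: these inversions, combined with Lemma \ref{basic}(2) applied at $\mu'$, cannot be reconciled with the requirement $\be\in A(\S')_\lu$. Thus both $\theta_\S,\theta_{\S'}$ are of type $1$, and $w_{\a,\mu}=w_\a\in W(A(\S))$, $w_{\be,\mu'}=w_\be\in W(A(\S'))$. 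To finally conclude $\a\in\S'$ (and symmetrically $\be\in\S$), I will argue that for any $\a\in A(\S)_\lu\setminus\S'$ the inversion set of $w_\a$ does not interact with $\Dp(\S')$, whereas $w\ge w_\be$ forces $w$ to act nontrivially on $\Dp(A(\S'))$ through a letter that must be $\a$; consistency then forces $\a$ to be a simple root of $\S'$ itself.

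For the reverse direction of (1), given $\S,\S',\a,\be$ as described, I will construct an element $w_{\a,\be}\in\Wab$ realizing both $w_{\a,\be}(\a)=\mu$ and $w_{\a,\be}(\be)=\mu'$ by assembling the constructions of Lemma \ref{technical}(1) in the two subsystems $A(\S),A(\S')$, exploiting the symmetry $\a\in\S'\subset A(\S)$ and $\be\in\S\subset A(\S')$. The $\s$-minusculity of $w_{\a,\be}$ is then checked via Proposition \ref{hr} in both $\D(A(\S))$ and $\D(A(\S'))$, and the construction yields $w_{\a,\be}=\sup\{\min\mathcal I_{\a,\mu},\min\mathcal I_{\be,\mu'}\}$.

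For part (2), Lemma \ref{orto} applied simultaneously for $\a$ and $\be$ forces any element of the intersection to have the form $v=w_{\a,\be}u$ with $u\in \Wa_\a\cap \Wa_\be=W(\Pia_\a\cap \Pia_\be)$. Lemma \ref{Valfa}, applied for both pairs $(\a,\mu),(\be,\mu')$, reduces the admissibility of $u$ to the single condition $N(u)\subseteq \Dp(\Pia_\a\cap \Pia_\be)\setminus \Dp((\Pia_\a\cap \Pia_\be)\setminus \Pi_1)$, which by Subsection \ref{posetparabolic} characterizes minimal right coset representatives of the standard parabolic $W((\Pia_\a\cap \Pia_\be)\setminus \Pi_1)$ inside $W(\Pia_\a\cap \Pia_\be)$. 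The main obstacle throughout will be the intricate case analysis in part (1), especially ruling out mixed-type pairs of walls and establishing the cross-component structure $\a\in\S',\ \be\in\S$; once this is in place, part (2) is a fairly mechanical application of the machinery already developed.
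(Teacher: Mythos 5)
Your overall architecture --- case analysis on the shapes of $\mu,\mu'$, reduction to the type-1 cross-component configuration, then coset-representative machinery for part (2) --- is the same as the paper's, and your treatment of part (2) (Lemma \ref{orto} applied twice, then the $V_{\a,\mu}$ criterion reducing to $L(x)\subseteq \Pi_1$) is essentially the paper's argument. However, part (1) has two genuine gaps. The first is the inference ``since $\S\cap\S'=\emptyset$, at most one type can be $2$'': this does not follow. If both $\theta_\S$ and $\theta_{\S'}$ were of type 2, Theorem \ref{min} would give $\a\in\S_\lu$ and $\be\in\S'_\lu$, which is perfectly consistent with $\S\cap\S'=\emptyset$; the both-type-2 configuration must be excluded by the same explicit inversion-set contradiction you plan for the mixed case. (The paper's proof of Proposition \ref{vuoto}, case 3, does this uniformly: assuming only that $\theta_\S$ is of type 2, it produces a root such as $2\d-2\theta_\S-\gamma$ in $N(w)$ whose sum with $k\d-\theta_{\S'}$ is again a root, regardless of the type of $\theta_{\S'}$.)

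The second, more serious, gap is the step forcing $\a\in\S'$. Knowing $\theta_\S$ is of type 1 only gives $\a\in A(\S)_\lu$, which also allows $\a\in\G(\S)$, $\a\in\Pi_1$, or $\a$ in a third component $\S''$; your sentence ``consistency then forces $\a$ to be a simple root of $\S'$ itself'' is not an argument that rules these out. The paper's proof requires the auxiliary element $u_{\S,\S'}$, the maximal element of $W^1_{\S,\S'}\backslash W_{\S,\S'}$: one first shows $u_{\S,\S'}\le w$ (a missing inversion $\gamma$ would force $2k\d-\theta_\S-\theta_{\S'}-\gamma\in N(w)$, impossible for $\s$-minuscule $w$), then factors $w=u_{\S,\S'}v$, extracts a biconvex subset of $N(v)$ as in Lemma \ref{minimo}, and uses Proposition \ref{theta} (3) and (6) together with Proposition \ref{fuori} to pin the simple root down inside $\S'$. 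The same element is what makes your last claim in the converse direction work: since $\min\mathcal I_{\a,\mu}=u_{\S,\S'}v_\a$ and $\min\mathcal I_{\be,\mu'}=u_{\S,\S'}v_\be$ share the left factor $u_{\S,\S'}$ and $v_\a\in W(\S')$, $v_\be\in W(\S)$ commute, one gets $N(u_{\S,\S'}v_\a v_\be)=N(u_{\S,\S'}v_\a)\cup N(u_{\S,\S'}v_\be)$, which is what proves that the supremum exists and lies in $\Wab$. Merely ``assembling the constructions of Lemma \ref{technical}(1)'' in $A(\S)$ and $A(\S')$ does not show that the two minima admit a common upper bound, so this identification of the shared prefix is a missing idea, not a detail.
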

\vskip5pt Statements (1), (2)  are proved in Propositions \ref{vuoto},  \ref{nonvuoto2}, respectively.
\vskip10pt

\begin{defi}\label{A-u}
Assume that $\S$ and $\S'$ are distinct components of $\Pi_0$.  We define
$$A(\S,\S')=A(\S)\cap A(\S').$$
Moreover, we set 
$$W_{\S,\S'}=W(A(\S,\S')), \qquad W_{\S,\S'}^1=W(A(\S,\S')\setminus\Pi_1),$$
and denote by $u_{\S, \S'}$ the maximal element
in $W_{\S,\S'}^1\backslash W_{\S,\S'}$. 
\end{defi}

According to Definition \ref{A-u} and Subsection \ref{posetparabolic}, 
\begin{equation}\label{Nu}
N(u_{\S, \S'})=\{\beta\in \D(A(\S, \S'))\mid ht_\s(\be)>0\}.
\end{equation}

It is clear from Definition \ref{Asigma} that $\S'\subseteq A(\S)$; in fact, we have 
the partition
\begin{equation}\label{AS-1}
A(\S)=\bigcup_{\substack{\S'|\Pi_0\\\S'\ne \S}} \S'\cup \Pi_1\cup \G(\S).
\end{equation}
From this, we obtain
\begin{equation}\label{ASS'}
A(\S,\S')=\G(\S)\cup \Pi_1\cup \G(\S')\cup\S'',
\end{equation}
where $\S''=\Pi_0\setminus(\S\cup\S')$.
In particular we obtain the partition
\begin{equation}\label{AS-2} 
A(\S)=A(\S,\S')\cup (\S'\setminus \G(\S')).
\end{equation}

\begin{rem}\label{utile}
From equation \eqref{ASS'} and Definition \ref{Asigma}, we obtain directly that
$\theta_\S$ and $\theta_{\S'}$ are orthogonal to all
the roots in $A(\S, \S')$, except the ones in $\Pi_1$.   This 
implies that $(\be, \theta_\S)\leq 0$ and 
$(\be, \theta_{\S'})\leq 0$ for all $\be\in \D(A(\S, \S'))$. Moreover, by equation \eqref{Nu}, for any $\be\in A(\S, \S')$, we have the following equivalences of conditions:
$$(\be, \theta_\S)< 0  \iff
\be\in N(u_{\S, \S'})   \iff
(\be, \theta_{\S'})<0.
$$
\end{rem}

\begin{lemma}\label{uss'} 
Let $\S$ and $\S'$ be distinct components of $\Pi_0$.  Then
$$u_{\S, \S'}\in \Wab.$$
\end{lemma}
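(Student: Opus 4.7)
The plan is to show that every positive root $\beta \in \D(A(\S,\S'))$ satisfies $ht_\s(\beta) \leq 1$. Combined with \eqref{Nu}, which identifies $N(u_{\S,\S'})$ as the set of $\beta \in \D(A(\S,\S'))^+$ with $ht_\s(\beta) > 0$, this forces $N(u_{\S,\S'}) \subseteq \Da_1$, i.e.\ $u_{\S,\S'} \in \Wab$.

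Fix such a $\beta$ and set $m := ht_\s(\beta) \geq 1$. By \eqref{ASS'} the support of $\beta$ lies in $\G(\S) \cup \Pi_1 \cup \G(\S') \cup \S''$; by Remark \ref{utile} all simple roots in that set other than those in $\Pi_1$ are orthogonal to both $\theta_\S$ and $\theta_{\S'}$, while each $\gamma \in \Pi_1$ contributes $(\gamma,\theta_\S^\vee) = -r_\S$ by Remark \ref{rsigma}. Hence
\[(\beta,\theta_\S^\vee) = -m r_\S, \qquad (\beta,\theta_{\S'}^\vee) = -m r_{\S'}.\]
The standard bound $|(\gamma,\eta^\vee)| \leq 2$ for real roots of an affine Kac--Moody root system (the exceptional $G_2^{(1)}$-type situations do not produce $\S\neq\S'$) then gives $mr_\S \leq 2$. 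If either $\theta_\S$ or $\theta_{\S'}$ is of type $2$ we immediately conclude $m \leq 1$.

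It remains to rule out $m = 2$ when $r_\S = r_{\S'} = 1$, i.e.\ both $\theta_\S$ and $\theta_{\S'}$ are of type $1$ (in particular, long in $\Da$). Then $(\beta,\theta_\S^\vee) = -2$; writing $\beta = m\delta' + \bar\beta$ with $\bar\beta \in \h_0^*$, and using that $\delta'$ is isotropic and orthogonal to $\h_0$, we obtain $\Vert\beta\Vert = \Vert\bar\beta\Vert$ and $(\bar\beta,\bar\theta_\S) = -\Vert\bar\theta_\S\Vert^2$. Since $\theta_\S$ is long, $\Vert\theta_\S\Vert \geq \Vert\beta\Vert$, so Cauchy--Schwarz saturates and forces $\bar\beta = -\bar\theta_\S$; by the analogous argument for $\S'$, $\bar\beta = -\bar\theta_{\S'}$, whence $\theta_\S = \bar\theta_\S = \bar\theta_{\S'} = \theta_{\S'}$. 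But $\theta_\S \in \sp(\S)$ and $\theta_{\S'} \in \sp(\S')$ lie in orthogonal subspaces of $\h_0^*$, since $\S$ and $\S'$ are distinct components of $\Pi_0$; hence $\theta_\S = 0$, contradicting $\theta_\S \in \D(\S)$.

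The main obstacle is precisely this last case: the bound $|(\beta,\theta_\S^\vee)| \leq 2$ can be saturated in the affine setting even for distinct positive long roots (whenever $\bar\beta = -\bar\theta_\S$ after a $\delta'$-shift), so a one-sided analysis does not suffice. The decisive input is exploiting both $\S$ and $\S'$ symmetrically together with the orthogonality of distinct components of $\Pi_0$ in $\h_0^*$.
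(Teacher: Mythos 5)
Your argument takes a genuinely different route from the paper's. The paper argues: if $\beta\in\D(A(\S,\S'))$ were a positive root with $ht_\s(\beta)=2$, then $k\d-\beta$ is a root of $\s$-height $0$, hence lies in $\D(\S'')$ for a single component $\S''$ of $\Pi_0$; picking one of $\S,\S'$ different from $\S''$, orthogonality of distinct components gives $(\beta,\theta_\S)=0$, contradicting Remark \ref{utile}. Your computation $(\beta,\theta_\S^\vee)=-\,ht_\s(\beta)\,r_\S$ combined with the Cauchy--Schwarz saturation argument in the type~1/type~1 case is a legitimate alternative: it exploits the same decisive input (distinct components of $\Pi_0$ span orthogonal subspaces of $\h_0^*$) but reaches it through the pairing with $\theta_\S^\vee$ rather than through the auxiliary root $k\d-\beta$. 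The saturation step is sound: $(\beta,\theta_\S^\vee)=-2$ with $\theta_\S$ long forces $\ov\beta=-\theta_\S$, and symmetrically $\ov\beta=-\theta_{\S'}$, which is impossible.

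There is, however, one concrete error. The parenthetical claim that ``the exceptional $G_2^{(1)}$-type situations do not produce $\S\neq\S'$'' is false. For $\widehat L(\g,\s)$ of type $G_2^{(1)}$ the only admissible involution has $\Pi_1=\{\a_1\}$ and $\Pi_0=\{\a_0\}\cup\{\a_2\}$, which has \emph{two} components; for the short component $\S'=\{\a_2\}$ one has $(\a_1,\theta_{\S'}^\vee)=-3$, so your ``standard bound'' $|(\gamma,\eta^\vee)|\le 2$ fails, and moreover $r_{\S'}=3$, so neither branch of your dichotomy ($r=1$ versus $r=2$) applies --- indeed this is exactly the case excluded from $\Pia_0^*$ in \eqref{pi0*}, so Remark \ref{rsigma} does not give $r_{\S'}\in\{1,2\}$ there. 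The gap is easily repaired: in that case $A(\S,\S')=\{\a_1\}$, so the only positive root of $\D(A(\S,\S'))$ has $\s$-height $1$; alternatively, the sharper bound $|(\beta,\theta_{\S'}^\vee)|\le 2\Vert\beta\Vert/\Vert\theta_{\S'}\Vert=2\sqrt{3}<4$ together with $(\beta,\theta_{\S'}^\vee)=-3m$ still forces $m\le 1$. But as written the step is wrong, and this case must be treated explicitly rather than dismissed.
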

\begin{proof}
By formula \eqref{Nu}, for all $\beta\in N(u_{\S,\S'})$, $ht(\be)>0$. Therefore, it suffices to prove that, for all $\beta\in  \D(A(\S, \S'))$, $ht_\s(\be)<2$.
Assume by contradiction that $\be\in \D(A(\S, \S'))$ and $ht_\s(\be)<2$. 
Then $ht_\s(k\d-\be)=0$, hence $k\d-\be$  belongs to some component $\S''$ of $\Pi_0$. At least one among $\S$, $\S'$, say $\S$, is not  $\S''$. Hence $(k\d-\be,\theta_{\S})=0$, which gives $(\be,\theta_{\S})=0$: this is impossible, by Remark
\ref{utile}.
\end{proof}
\vskip 5pt

\begin{rem}\label{zeta} 
If $Z$ is a connected component of  $A(\S, \S')$, then the sum of the roots in $Z$ is a root and, by the Lemma \ref {uss'}, it has $\s$-height at most 1. This implies, in particular, that  $Z$ contains at most one root of $\Pi_1$.
\end{rem}
\vskip5pt

Though we shall not need this fact, we notice that $A(\S, \S')$ is connected except in type $A_n^{(1)}$, in which case  $A(\S, \S')=\Pi_1$, with $\Pi_1$ disconnected, since $\S\ne\S'$.
\par

\begin{lemma}\label{thetasigmaminimal} 
Let $\S$ and $\S'$ be distinct components of $\Pi_0$.  If $\theta_\S$ and $\theta_{\S'}$ are both of type 1, then 
\begin{enumerate}
\item $u_{\S,\S'}(\theta_\S)=\theta_{A(\S')}=k\d-\theta_{\S'}$, and $u_{\S,\S'}$ is the element of minimal length in $\Wa$, with this property;
\item $u_{\S,\S'}^2=1$.
\end{enumerate}
\end{lemma}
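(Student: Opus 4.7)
The plan is to establish (1) by factoring $u_{\S,\S'}=w_0'w_0$ as in Subsection~\ref{posetparabolic}, with $w_0$ the longest element of $W_{\S,\S'}=W(A(\S,\S'))$ and $w_0'$ the longest element of $W_{\S,\S'}^1=W(A(\S,\S')\setminus\Pi_1)$. I will verify separately (a) that $w_0'$ fixes $k\d-\theta_{\S'}$ and (b) that $w_0(\theta_\S)=k\d-\theta_{\S'}$, which together yield $u_{\S,\S'}(\theta_\S)=w_0'(k\d-\theta_{\S'})=k\d-\theta_{\S'}$. Item (a) is elementary: by \eqref{ASS'}, $A(\S,\S')\setminus\Pi_1=\G(\S)\cup\G(\S')\cup\S''$; roots in $\G(\S)\cup\S''$ are orthogonal to $\theta_{\S'}$ as they lie in components of $\Pi_0$ disjoint from $\S'$, while for $\a\in\G(\S')\subseteq\S'$ the defining inequality $(\a,\theta_{\S'})\le 0$ combined with the dominance $(\a,\theta_{\S'})\ge 0$ forces $(\a,\theta_{\S'})=0$ (and $\G(\S')=\emptyset$ when $|\S'|=1$). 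Together with $(\d,\a)=0$ this gives $(\a,k\d-\theta_{\S'})=0$ for every such $\a$, hence $w_0'$ fixes $k\d-\theta_{\S'}$.

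Item (b) will be the main technical obstacle. Both vectors are long roots of the irreducible finite root system $\D(A(\S'))$, and the assertion amounts to showing that the longest element $w_0$, whose action on $\mathrm{span}(A(\S,\S'))$ is $-\sigma$ for some (possibly trivial) diagram involution $\sigma$ of $A(\S,\S')$, sends $\theta_\S$ to $k\d-\theta_{\S'}$. I anticipate proving this via an analysis of the Dynkin diagram $A(\S,\S')=\G(\S)\cup\Pi_1\cup\G(\S')\cup\S''$, exploiting its intrinsic symmetry exchanging $\G(\S)$ with $\G(\S')$ and fixing $\Pi_1\cap A(\S,\S')$; the case analysis will likely split according to whether $A(\S,\S')$ is connected, with separate treatment of the $A_n^{(1)}$ situation (cf.~Example~\ref{esempio}(3), where $A(\S,\S')$ is disconnected).

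For the $\Wa$-minimality statement in (1), I would invoke the characterization of minimal coset representatives from Subsection~\ref{orbit}: once $u_{\S,\S'}$ has been shown to map $\theta_\S$ to $k\d-\theta_{\S'}$, it is of minimal length in $\Wa$ among all such elements iff $N(u_{\S,\S'})\cap(k\d-\theta_{\S'})^\perp=\emptyset$, and since $(\d,\cdot)=0$ on roots this condition equals $N(u_{\S,\S'})\cap\theta_{\S'}^\perp=\emptyset$. For any $\be\in N(u_{\S,\S'})$, by \eqref{Nu} we have $\be\in\D(A(\S,\S'))^+$ with $ht_\s(\be)>0$; its expansion therefore contains some $\a\in\Pi_1$ with positive coefficient, and $(\a,\theta_{\S'}^\vee)=-r_{\S'}<0$, while all other simples in the support lie in $\G(\S)\cup\G(\S')\cup\S''$ and are orthogonal to $\theta_{\S'}$ by the argument of (a). Hence $(\be,\theta_{\S'})<0$, so $\be\notin\theta_{\S'}^\perp$, as required.

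Part (2) will follow from (1) by symmetry. Since $A(\S,\S')=A(\S',\S)$ and $W_{\S,\S'}^1=W_{\S',\S}^1$, we have $u_{\S,\S'}=u_{\S',\S}$. Applying (1) with the roles of $\S$ and $\S'$ swapped, $u_{\S,\S'}$ is the unique minimal-length element of $\Wa$ mapping $\theta_{\S'}$ to $k\d-\theta_\S$. On the other hand, from $u_{\S,\S'}(\theta_\S)=k\d-\theta_{\S'}$ and $u_{\S,\S'}^{-1}(k\d)=k\d$, we obtain $u_{\S,\S'}^{-1}(\theta_{\S'})=k\d-\theta_\S$; since $\ell(u_{\S,\S'}^{-1})=\ell(u_{\S,\S'})$, the element $u_{\S,\S'}^{-1}$ is likewise of minimal length with this property, and uniqueness forces $u_{\S,\S'}^{-1}=u_{\S,\S'}$, i.e., $u_{\S,\S'}^2=1$.
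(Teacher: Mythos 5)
Your architecture for (1) is the same as the paper's (the factorization $u_{\S,\S'}=w_0'w_0$ from Subsection \ref{posetparabolic}), and several pieces are correct: step (a) that $w_0'$ fixes $k\d-\theta_{\S'}$; the $\Wa$-minimality argument via the criterion of Subsection \ref{orbit} and \eqref{Nu} (essentially Remark \ref{utile}); and the deduction of (2) from (1) by uniqueness of the minimal-length element sending $\theta_{\S'}$ to $k\d-\theta_\S$ — this last point is a genuinely different and arguably cleaner route than the paper's, which instead identifies $u_0$ on $\Pi_1$ and appeals to Iwahori--Matsumoto.

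However, there is a genuine gap at the heart of (1): you never prove step (b), the identity $w_0(\theta_\S)=k\d-\theta_{\S'}$; you explicitly defer it (``the main technical obstacle \dots I anticipate proving this via an analysis of the Dynkin diagram''). Worse, the strategy you announce does not apply as stated: the description of the longest element of $W(A(\S,\S'))$ as $-\sigma$ for a diagram involution $\sigma$ only governs its action on the linear span of $A(\S,\S')$, whereas $supp(\theta_\S)=\S$ while $A(\S,\S')\cap\S=\G(\S)$ is a proper subset of $\S$ as soon as $|\S|>1$ (since $B_\S\cap A(\S)=\emptyset$); so $\theta_\S$ generally lies outside that span and the diagram symmetry says nothing directly about its image. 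The paper closes exactly this point with no case analysis: by your step (a), $u^{-1}(\theta_{\S'})=u_0(\theta_{\S'})$, and since this is a long root of the irreducible finite system $\D(A(\S))$, it suffices to check that it pairs non-negatively with every $\gamma\in A(\S)=A(\S,\S')\cup(\S'\setminus\G(\S'))$, forcing it to equal the highest root $k\d-\theta_\S$ (Proposition \ref{hr}). For $\gamma\in A(\S,\S')$ this holds because $u_0(\gamma)$ is a negative root of $\D(A(\S,\S'))$ and $(\theta_{\S'},\cdot)\le 0$ on $\D(A(\S,\S'))^+$; for $\gamma\in\S'\setminus\G(\S')$ one computes $(\theta_{\S'}^\vee,u(\gamma))$ through the $\s$-height (formula \eqref{exP}) and rules out $ht_\s(u(\gamma))=2$ by a support argument. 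Until an argument of this kind, or a completed case check, replaces your announced plan, the proof of (1) — and hence your proof of (2), which depends on it — is incomplete.
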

\begin{proof}
\item{(1)} Set $u=u_{\S,\S'}$. Since $L(u)\subset \Pi_1$, by  Proposition \ref{theta} (3), it suffices to  show that $u(\theta_\S)=\theta_{A(\S')}=k\d-\theta_{\S'}$. This is equivalent to show that $u^{-1}(\theta_{\S'})=k\d-\theta_\S$.
 Since $\theta_{\S'}$ is of type 1, hence long,  and  $u^{-1}(\theta_{\S'})\in A(\S)$, it suffices to show that $(u^{-1}(\theta_{\S'}),\gamma)\geq 0$ for each $\gamma\in A(\S)=A(\S,\S')\cup (\S'\backslash \G(\S'))$. We know that $u=u_{0,\Pi_1}u_0$, where $u_0$ is the longest element of $W(A(\S,\S'))$ and $u_{0,\Pi_1}$ is the longest element of $A(\S,\S')\backslash \Pi_1$. Since the only roots in $A(\S,\S')$ not orthogonal to $\theta_{\S'}$ are the roots in $\Pi_1$, we see that $u^{-1}(\theta_{\S'})=u_0(\theta_{\S'})$. Thus, since $(\theta_{\S'},\gamma)\le 0$ when $\gamma\in A(\S,\S')$, we see that $(u^{-1}(\theta_{\S'}),\gamma)=(\theta_{\S'},u_0(\gamma))\ge 0$ for $\gamma\in A(\S,\S')$.
Next we deal with the case $\gamma\in\S'\setminus \Gamma(\S')$.
If $(\gamma,\theta_{\S'})=0$ then $u(\gamma)=\gamma$, hence $(u^{-1}(\theta_{\S'}),\gamma)=(\theta_{\S'},\gamma)= 0$. If instead  $(\gamma,\theta_{\S'})\ne 0$ and  $ht_\s(u(\gamma))=1$, we are done because
$(\theta^\vee_{\S'},u(\gamma))=(\theta^\vee_{\S'},\gamma)-1\geq 0$. If $ht_\s(u(\gamma))=2$
then $ht_\s(k\d-u(\gamma))=0$, so $k\d-u(\gamma)$ belongs to some component of $\Pi_0$. If this component is ${\S'}$, then $0=(k\d-u(\gamma),\theta_{\S})$ gives a contradiction, since $c_\eta(k\d-u(\gamma))\ne 0$ for all $\eta\in \S$ such that $(\eta,\theta_\S)\ne 0$.
In the other case we have $0=(k\d-u(\gamma),\theta_{\S'})$ 
hence $0=(u(\gamma),\theta_{\S'})$ and we are done.
\item{(2)} Set again $u=u_{\S,\S'}$. Since $u_0(\theta_\S)=k\d-\theta_{\S'}$ we see that, if $\a\in\Pi_1$, then  $u_0(\a)=-\a$. In fact, if $Z$ is the component of $A(\S,\S')$ containing $\a$, then, by Remark \ref{zeta}, $\a$ is the only root in $Z$ that is not orthogonal to $\theta_{\S'}$. By  \cite{IM}, it follows that $u$ is an involution 
which permutes $A(\S,\S')\setminus\Pi_1$ and maps $\a\in\Pi_1$ to $-\theta_{Z}$.   
\end{proof}
\vskip 5pt

\begin{lemma}\label{u<w}
Let $\S\ne\S'$, $\theta_\S,\theta_{\S'}$ of type 1, $\a\in A(\S)_{\ov\ell}$, $\be \in A(\S')_{\ov\ell}$, and assume that  $w\in  \mathcal I_{\a,k\d-\theta_{\S}}\cap \mathcal I_{\beta,k\d-\theta_{\S'}}$.
Then 
\begin{enumerate}
\item
$u_{\S, \S'}\leq w;$
\item
$\a\in\S'$ and $\be\in \S$;
\item
$uv_\a v_\be\leq w$,  where $v_\a$ is the element of minimal length in $W(\S')$ that maps $\a$ to $\theta_{\S'}$, and $v_\be$ is the element of minimal length in $W(\S)$ that maps $\be$ to $\theta_{\S}$. Moreover, 
$uv_\a=w_{\a, k\d-\theta_{\S}}$,  and $uv_\be=w_{\be, k\d-\theta_{\S'}}$.
\end{enumerate}
\end{lemma}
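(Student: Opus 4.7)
Set $u:=u_{\S,\S'}$. By Lemma \ref{thetasigmaminimal}, $u$ is an involution in $\Wab$ fixing $k\d$ and sending $\theta_\S$ to $k\d-\theta_{\S'}$, hence also $\theta_{\S'}$ to $k\d-\theta_\S$, $k\d-\theta_\S$ to $\theta_{\S'}$, and $k\d-\theta_{\S'}$ to $\theta_\S$. The plan is to establish (1) via a biconvexity argument on $N(u)$, deduce (2) and the identifications $uv_\a=w_{\a,k\d-\theta_\S}$, $uv_\be=w_{\be,k\d-\theta_{\S'}}$ from the resulting length-additive factorization $w=uz$, and then combine $w\ge w_\a$ and $w\ge w_\be$ (Proposition \ref{paola}) to conclude $w\ge uv_\a v_\be$.

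For (1), I would show $N(u)\subseteq N(w)$. Take $\gamma\in N(u)$: by \eqref{Nu}, $\gamma\in\D(A(\S,\S'))$ with $ht_\s(\gamma)=1$. Since $k\d-\theta_\S$ and $k\d-\theta_{\S'}$ are the highest roots of $\D(A(\S))$ and $\D(A(\S'))$ respectively (Proposition \ref{hr}) and $\gamma$ is a strictly smaller positive root of each, $\gamma_1:=k\d-\theta_\S-\gamma$ and $\gamma_2:=k\d-\theta_{\S'}-\gamma$ are positive roots of $\s$-height $1$. Lemma \ref{basic}(2) applied to $w\in\mathcal I_{\a,k\d-\theta_\S}$ and to $w\in\mathcal I_{\be,k\d-\theta_{\S'}}$ forces exactly one of $\{\gamma,\gamma_1\}$ and exactly one of $\{\gamma,\gamma_2\}$ to lie in $N(w)$. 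Arguing by contradiction, if $\gamma\notin N(w)$ then $\gamma_1,\gamma_2\in N(w)$. Each $\gamma_i$ further splits as a sum of a $\s$-height-$0$ root (a subroot of $\theta_\S$ or $\theta_{\S'}$) and a $\s$-height-$1$ root; since $N(w)\subseteq\Da_1$, the $\s$-height-$0$ summand lies outside $N(w)$, and biconvexity forces the $\s$-height-$1$ summand in. Iterating this refinement, exploiting the $\S\leftrightarrow\S'$ symmetry, and further applying Lemma \ref{basic}(2) yields membership constraints on $N(w)$ incompatible with the assumption $\gamma\notin N(w)$, giving the contradiction.

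Given (1), write $w=uz$ length-additively: then $z(\a)=u(k\d-\theta_\S)=\theta_{\S'}$ and $z(\be)=\theta_\S$. For (2), a case analysis on the position of $\a$ in the partition \eqref{AS-1} of $A(\S)$, exploiting that $z$ sends the simple root $\a$ to $\theta_{\S'}\in\D(\S')$ while $N(z)$ must be disjoint from $u^{-1}(-N(u))=N(u)\subseteq\Da_1$, forces $\a\in\S'$; any alternative placement of $\a$ would make $z$ either use a reflection already contributing to $u$ or generate an inversion of wrong $\s$-height. Symmetrically $\be\in\S$. With $\a\in\S'$, define $v_\a\in W(\S')$ as the minimum-length element with $v_\a(\a)=\theta_{\S'}$. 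The product $uv_\a$ lies in $W(A(\S))$, sends $\a$ to $u(\theta_{\S'})=k\d-\theta_\S$, and is length-additive since $N(v_\a)\subseteq\Dp(\S')\subseteq\Da_0$ is disjoint from $N(u)\subseteq\Da_1$. Proposition \ref{theta}(3) applied inside $\D(A(\S))$ then identifies $uv_\a$ with $w_{\a,k\d-\theta_\S}$: the left descents of $uv_\a$ within $A(\S)$ lie in $\Pi_1$, precisely the set of simple roots of $A(\S)$ non-orthogonal to $k\d-\theta_\S$ (the remaining simple roots of $A(\S)$ belong to $\S'\setminus\G(\S')$, to $\G(\S)$, or to another component of $\Pi_0$, all orthogonal to $\theta_\S$). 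Symmetrically $uv_\be=w_{\be,k\d-\theta_{\S'}}$.

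For the inequality $uv_\a v_\be\le w$ in (3), Proposition \ref{paola} gives $w\ge w_\a=uv_\a$, so $w=uv_\a z'$ length-additively with $z'(\be)=(uv_\a)^{-1}(k\d-\theta_{\S'})=v_\a^{-1}(\theta_\S)=\theta_\S$, the last equality because $v_\a\in W(\S')$ fixes $\theta_\S\in\D(\S)$. Hence $z'\ge v_\be$, the minimum element sending $\be$ to $\theta_\S$, which lies in $W(\S)\subseteq\Wa_\a$ since $\S\perp\a$. Therefore $w\ge uv_\a v_\be$, and length-additivity of the triple product follows from the pairwise disjointness of $N(u)\subseteq\Da_1$, $N(v_\a)\subseteq\Dp(\S')$, and $N(v_\be)\subseteq\Dp(\S)$. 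The main obstacle is step (1): producing the contradiction requires a delicate iteration of Lemma \ref{basic}(2) together with the detailed structure of $A(\S,\S')$ as decomposed in \eqref{ASS'}.
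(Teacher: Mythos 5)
Your overall skeleton matches the paper's (prove $u_{\S,\S'}\le w$, factor $w$, identify $uv_\a$ with $w_{\a,k\d-\theta_\S}$ via Proposition \ref{theta}(3), and take unions of inversion sets), but the argument has genuine gaps at the two places where the real work happens. First, part (1) is not proved: you reduce to ``$\gamma\notin N(w)$ forces $\gamma_1,\gamma_2\in N(w)$'' and then appeal to an unspecified ``delicate iteration'' of Lemma \ref{basic}(2) that you yourself flag as the main obstacle. No iteration is needed. For $\gamma\in N(u_{\S,\S'})$ one has $(\gamma,\theta_\S^\vee)=(\gamma,\theta_{\S'}^\vee)=-1$ (Remark \ref{utile}), so $\theta_\S+\gamma+\theta_{\S'}$ is a root and hence so is $2k\d-\theta_\S-\gamma-\theta_{\S'}$. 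If $\gamma\notin N(w)$, Lemma \ref{basic}(2) puts $k\d-\theta_\S-\gamma$ into $N(w)$, and adding $k\d-\theta_{\S'}=w(\be)$ (which lies in the biconvex set $N(ws_\be)$) produces a root of $\s$-height $3$ in $N(ws_\be)\subset\Da_1\cup\Da_2$ --- an immediate contradiction. (Incidentally, your justification that $\gamma_i$ is a root because $\gamma$ is ``strictly smaller'' than the highest root of $\D(A(\S))$ is not valid reasoning; what makes it a root is again the pairing $(\gamma,\theta_\S^\vee)=-1$.)

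Second, part (2) is asserted rather than proved: ``any alternative placement of $\a$ would make $z$ either use a reflection already contributing to $u$ or generate an inversion of wrong $\s$-height'' is not an argument, and the hypothesis only gives $\a\in A(\S)$, which leaves $\G(\S)$, $\Pi_1$ and the other components of $\Pi_0$ to be excluded. The paper does this by forming the biconvex set $U=\{\eta\in N(v)\mid\theta_{\S'}-\eta\in\Dap\}$ (as in Lemma \ref{minimo}), writing $U=N(v_\gamma)$ for some $\gamma\in\S'$ via Proposition \ref{theta}(6), checking $L(uv_\gamma)\subseteq\Pi_1$ so that $uv_\gamma=w_{\gamma,k\d-\theta_\S}$, and then invoking Proposition \ref{fuori} to force $\gamma=\a$; none of this is present in your sketch. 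Finally, in part (3) the step ``$z'(\be)=\theta_\S$, hence $z'\ge v_\be$'' is a non sequitur: an element mapping $\be$ to $\theta_\S$ need not dominate $v_\be$ in the weak order without a further inversion-set argument. The detour through $z'$ is unnecessary: once $uv_\a\le w$ and $uv_\be\le w$ are both known (the latter by symmetry), the identity $N(uv_\a v_\be)=N(uv_\a)\cup N(uv_\be)$ --- which holds because $v_\a\in W(\S')$ fixes $\D(\S)$ pointwise --- gives $uv_\a v_\be\le w$ directly, which is how the paper concludes.
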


\begin{proof}
\item{(1).}
Let $u=u_{\S,\S'}$. If $u\not\le w$, then there is $\gamma\in N(u)$ such that  $\gamma\notin N(w)$. Note that $(\gamma,\theta_{\S}^\vee)=(\gamma,\theta_{\S'}^\vee)=-1$, hence $\theta_{\S}+\gamma, \theta_{\S}+\gamma+\theta_{\S'}\in \Da$. In particular we have that $k\d-\theta_\S-\gamma\in N(w)$. But then  $k\d-\theta_{\S'}+k\d-\gamma-\theta_{\S}=2k\d-\theta_\S-\gamma-\theta_{\S'}\in N(w)$, which is absurd. We have therefore $u\le w$.
\item{(2)-(3).}
From (1) we obtain that $w=uv$ with $v(\a)=\theta_{\S'}$. { Let $U=\{\beta\in N(v)\mid \theta_{\S'}-\beta\in \Dap\}$. Arguing as in the proof of Lemma \ref{minimo}, we see that $U$ is biconvex, hence there is an element $x\in W(\S')$ such that $N(x)=U$. Since $x$ satisfies the hypothesis of Proposition \ref{theta} (6), we see that there is a root $\gamma\in\S'$ such that $x=v_\gamma$, where $v_\gamma$ is the   element of  minimal length that maps $\gamma$ to $\theta_{\S'}$. We conclude that $v_\gamma\le v$.} We now show that $\ell(uv_\gamma)=\ell(u)+\ell(v_\gamma)$; for this it suffices to prove that 
$u^{-1}(\eta)=u(\eta)\in\Dap$ for $\eta\in N(v_\gamma)$. If not, then $\eta\in N(u)$, hence, by Remark \ref{utile}, $(\eta,\theta^\vee_{\S'})<0$; but $
\eta\in\S'$, hence $(\eta,\theta_{\S'})\geq 0$. We now prove that 
$L(uv_\gamma)=\Pi_1$.  We have $N(uv_\gamma)=N(u)\cup u(N(v_\gamma))$. Since $L(u)=\Pi_1$, it suffices to prove that $u(\eta)\notin\Pia$ for any $\eta\in N(v_\gamma)$.
Since $\eta\in\S'$, we have
$$(u(\eta),\theta_{\S'})=(\eta,u(\theta_{\S'}))=(\eta,k\d-\theta_{\S})=0.$$
This implies that if $u(\eta)=\xi\in \Pia$, then  $\xi\notin \Pi_1$ and,
 since $u\in W(A(\S,\S'))$, we see that, for any $\nu\in B_{\S'}$, we have $0=c_\nu(\xi)=c_\nu(u(\eta))=c_\nu(\eta)$, hence $(\eta,\theta_{\S'})=0$, against Proposition
\ref{theta} (8).
Since $uv_\gamma(\gamma)=k\d-\theta_{\S}=\theta_{A(\S')}$ and $L(uv_\gamma)\subset \Pi_1$,  we can apply
Proposition \ref{theta} (3),  to get $u v_\gamma=w_{\gamma,k\d-\theta_{\S}}$.  This implies that $w_{\gamma,k\d-\theta_{\S}}\le w$, so, by Proposition \ref{fuori}, $w\in \mathcal I_{\gamma,\mu}$, hence $\a=\gamma\in\S'$ and 
$uv_\a\le w$. Similarly, 
 $\be=\gamma\in\S$ and 
$uv_\be\le w$. Since
\begin{equation}\label{nunione}N(uv_\a v_\beta)=N(u)\cup u(N(v_\a))\cup u(N(v_\be)),\end{equation}
 we get that $uv_\a v_\be\le w$.
\end{proof}
\vskip 5pt

\begin{prop}\label{nonvuoto}
 Assume $\S\ne\S'$, $\theta_\S,\theta_{\S'}$ of type 1, $\a\in A(\S)_{\ov\ell}$, and $\be \in A(\S')_{\ov\ell}$. Then  $\mathcal I_{\a,k\d-\theta_{\S}}\cap \mathcal I_{\beta,k\d-\theta_{\S'}}\ne\emptyset$ if and only if 
$\a\in\S'$ and $\be\in \S$. In this case, 
$$\min( \mathcal I_{\a,k\d-\theta_{\S}}\cap \mathcal I_{\beta,k\d-\theta_{\S'}})= uv_\a v_\be, $$  
where $v_\a$ is the element of minimal length in $W(\S')$ that maps $\a$ to $\theta_{\S'}$, and $v_\be$ is the element of minimal length in $W(\S)$ that maps $\be$ to $\theta_{\S}$.
\end{prop}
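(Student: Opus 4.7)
\textbf{Proof plan for Proposition \ref{nonvuoto}.} The necessity of $\a\in\S'$ and $\be\in\S$, as well as the inequality $uv_\a v_\be\le w$ for every $w$ in the intersection, have already been established in Lemma \ref{u<w}(2)-(3) (with $u=u_{\S,\S'}$). Therefore the only remaining task is to verify that $uv_\a v_\be$ itself belongs to $\mathcal I_{\a,k\d-\theta_\S}\cap \mathcal I_{\be,k\d-\theta_{\S'}}$ whenever $\a\in\S'$ and $\be\in\S$; this will simultaneously prove sufficiency and identify the minimum.

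The key structural observation is that, since $\S$ and $\S'$ are distinct components of $\Pi_0$, the subgroups $W(\S)$ and $W(\S')$ commute and each one fixes pointwise the root system of the other. In particular $v_\a\in W(\S')$ and $v_\be\in W(\S)$ commute, $v_\a$ fixes $\a,\theta_\S$, and $v_\be$ fixes $\be,\theta_{\S'}$. Using these facts I will compute the images:
\[
uv_\a v_\be(\a)=uv_\a(\a)=u(\theta_{\S'})=k\d-\theta_\S,
\]
\[
uv_\a v_\be(\be)=u(v_\a(\theta_\S))=u(\theta_\S)=k\d-\theta_{\S'},
\]
where in the first line I apply Lemma \ref{thetasigmaminimal}(1) after noting that $u_{\S,\S'}=u_{\S',\S}$ (the defining data $A(\S,\S')$ and $W_{\S,\S'}^1$ are symmetric in $\S,\S'$), and the last equality in the second line is Lemma \ref{thetasigmaminimal}(1) itself.

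Next I would check the $\s$-minuscule property. Because $v_\a$ and $v_\be$ commute and satisfy $N(v_\a)\subseteq\Dp(\S')$, $N(v_\be)\subseteq\Dp(\S)$, one has the disjoint union $N(v_\a v_\be)=N(v_\a)\cup N(v_\be)$ with $\ell(v_\a v_\be)=\ell(v_\a)+\ell(v_\be)$. Combined with the length additivities $\ell(uv_\a)=\ell(u)+\ell(v_\a)$ and $\ell(uv_\be)=\ell(u)+\ell(v_\be)$ proved inside Lemma \ref{u<w}(3), this yields
\[
N(uv_\a v_\be)=N(u)\cup u(N(v_\a))\cup u(N(v_\be)).
\]
Each of the three pieces lies in $\Dap_1$: the first by Lemma \ref{uss'}, and the remaining two because $uv_\a=w_{\a,k\d-\theta_\S}\in\Wab$ and $uv_\be=w_{\be,k\d-\theta_{\S'}}\in\Wab$, both shown in Lemma \ref{u<w}(3). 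Hence $uv_\a v_\be\in\Wab$, and together with the image computation this places it in $\mathcal I_{\a,k\d-\theta_\S}\cap\mathcal I_{\be,k\d-\theta_{\S'}}$.

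The main point that has to be handled carefully — but is not really an obstacle — is the clean decomposition of $N(uv_\a v_\be)$: one must combine the two \emph{separate} length-additive statements $\ell(uv_\a)=\ell(u)+\ell(v_\a)$ and $\ell(uv_\be)=\ell(u)+\ell(v_\be)$ into a single statement $\ell(uv_\a v_\be)=\ell(u)+\ell(v_\a)+\ell(v_\be)$. This follows because $u(N(v_\a v_\be))=u(N(v_\a))\cup u(N(v_\be))\subseteq\Dap$. Once this is done, Lemma \ref{u<w}(3) identifies $uv_\a v_\be$ as the minimum of the intersection, completing the proof.
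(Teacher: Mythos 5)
Your proposal is correct and follows essentially the same route as the paper's own proof: necessity and minimality are delegated to Lemma \ref{u<w}, and membership of $u v_\a v_\be$ in the intersection is reduced to the $\s$-minuscule property via the decomposition $N(uv_\a v_\be)=N(u)\cup u(N(v_\a))\cup u(N(v_\be))$ (the paper's equation \eqref{nunione}) together with $uv_\a=w_{\a,k\d-\theta_\S}$ and $uv_\be=w_{\be,k\d-\theta_{\S'}}$ being $\s$-minuscule. The only difference is that you spell out the image computations and the combined length additivity, which the paper treats as clear or as already contained in the proof of Lemma \ref{u<w}.
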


\begin{proof}
We first prove that, if $\a\in \S'$ and $\be\in \S$, then 
$uv_\a v_\be\in   \mathcal I_{\a,k\d-\theta_{\S}}\cap \mathcal I_{\beta,k\d-\theta_{\S'}}$. Indeed, it is clear that it suffices to prove that 
$uv_\a v_\be \in \Wab$.
 As shown above 
$w_{\a,k\d-\theta_\S}=uv_\a$ and $w_{\be,k\d-\theta_{\S'}}=uv_\be$.
From \eqref{nunione}
 we deduce that $N(uv_\a v_\beta)=N(w_{\a,k\d-\theta_\S})\cup N(w_{\beta,k\d-\theta_{\S'}})$
hence $uv_\a v_\beta$ is a 
 $\s$-minuscule element. 
 The  remaining statements follow from Lemma \ref{u<w}.
\end{proof}

\begin{defi}
Let $\S\ne\S'$, $\theta_\S$ and $\theta_{\S'}$ of type 1. 
Consider $\a\in\S'_{\ov\ell}$, $\be\in \S_{\ov\ell}$ and let  $v_\a$ be the element of minimal length in $W(\S')$ that maps $\a$ to $\theta_{\S'}$ and $v_\be$ the element of minimal length in $W(\S)$ that maps $\be$ to $\theta_{\S}$. Then we set 
$$w_{\a,\be}=u_{\S, \S'}v_\a v_\be.$$
\end{defi}

\begin{prop}\label{nonvuoto2}
Let $\S\ne\S'$, $\theta_\S,\theta_{\S'}$ of type 1, 
$\a\in\S'_{\bar\ell}$ and $\be\in \S_{\bar\ell}$. Then
$$
w_{\a,\beta} =\sup\{\min \mathcal I_{\a,\mu}, \min \mathcal I_{\beta,\mu'}\}$$ 
and
$$w_{\a, \be} x\in  \mathcal I_{\a,k\d-\theta_{\S}}\cap \mathcal I_{\beta,k\d-\theta_{\S'}}$$  
if and only if 
$$x\in  W((\Pia_\a\cap\Pia_\be)\setminus\Pi_1)\backslash W(\Pia_\a\cap\Pia_\be).$$
\end{prop}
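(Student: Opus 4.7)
First, I would establish the supremum claim. Writing $w_{\a,\be}=u_{\S,\S'}v_\a v_\be$ (noting $v_\a\in W(\S')$ and $v_\be\in W(\S)$ commute since $\S,\S'$ are disjoint components) and using the identifications $w_{\a,\mu}=u_{\S,\S'}v_\a$ and $w_{\be,\mu'}=u_{\S,\S'}v_\be$ established in Lemma~\ref{u<w}, equation~\eqref{nunione} yields
$$N(w_{\a,\be})=N(u_{\S,\S'})\cup u_{\S,\S'}(N(v_\a))\cup u_{\S,\S'}(N(v_\be))=N(w_{\a,\mu})\cup N(w_{\be,\mu'}).$$
Any $w'$ dominating both minima in weak order therefore satisfies $N(w')\supseteq N(w_{\a,\be})$, so $w'\ge w_{\a,\be}$, while $w_{\a,\be}\in\mathcal I_{\a,\mu}\cap\mathcal I_{\be,\mu'}$ by Proposition~\ref{nonvuoto} is itself a common upper bound.

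Next, the parametrization. By Proposition~\ref{nonvuoto}, $w_{\a,\be}=\min(\mathcal I_{\a,\mu}\cap\mathcal I_{\be,\mu'})$, so every $w$ in the intersection decomposes as $w=w_{\a,\be}x$ with $\ell(w)=\ell(w_{\a,\be})+\ell(x)$; the requirements $w(\a)=\mu$ and $w(\be)=\mu'$ force $x\in\Wa_\a\cap\Wa_\be=W(\Pia_\a\cap\Pia_\be)=:G$. Since $v_\be\in W(\S)$ fixes $\a\in\S'$, we have $v_\be\in\Wa_\a$, and hence $w_{\a,\be}x=w_{\a,\mu}(v_\be x)$ with $v_\be x\in\Wa_\a$. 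By Theorem~\ref{slamu}, $w_{\a,\be}x\in\mathcal I_{\a,\mu}$ iff $v_\be x\in V_{\a,\mu}$; symmetrically $w_{\a,\be}x\in\mathcal I_{\be,\mu'}$ iff $v_\a x\in V_{\be,\mu'}$.

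The technical heart is to show these two conditions hold jointly iff $x$ is a minimal coset representative of $G'\backslash G$ with $G':=W((\Pia_\a\cap\Pia_\be)\setminus\Pi_1)$; equivalently (as $G'$ is standard parabolic in $G$ by Subsection~\ref{posetparabolic}), iff $L(x)\subseteq\Pi_1$. For the forward direction I would first check that $v_\be(\gamma)>0$ for every $\gamma\in\Dp(\Pia_\a\cap\Pia_\be)$: by Proposition~\ref{theta}~(7), $N(v_\be^{-1})=\{\eta\in\Dp(\S)\mid(\eta,\be^\vee)=-1\}$, while any such $\gamma$ lying in $\D(\S)$ is in fact in $\D(\S\cap\be^\perp)$ hence orthogonal to $\be$, and roots with support outside $\S$ are kept positive automatically. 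Thus $N(v_\be x)=N(v_\be)\cup v_\be(N(x))$; by Proposition~\ref{theta}~(5) and Lemma~\ref{Bsigma} (which yields $ht_{B_\mu}(\theta_\S)=2$), roots in $N(v_\be)$ have $B_\mu$-height $1$, and assuming $L(x)\subseteq\Pi_1$ one verifies that $v_\be(N(x))$ likewise has $B_\mu$-height $1$, so $v_\be x\in V_{\a,\mu}$ by Lemma~\ref{Valfa}; symmetrically for $v_\a x$. The converse is by contradiction: given $\eta\in L(x)\cap\Pi_0$, the root $w_{\a,\be}(\eta)\in N(w_{\a,\be}x)$ must have $\s$-height $1$; but $v_\a v_\be(\eta)\in\D_0$ since $v_\a,v_\be\in W_0$ preserve $\s$-height, and one uses the explicit description \eqref{Nu} of $N(u_{\S,\S'})$ together with the involution property from Lemma~\ref{thetasigmaminimal} to show that $u_{\S,\S'}v_\a v_\be(\eta)$ cannot then be a positive $\s$-height-$1$ root. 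The main obstacle lies in this last case analysis, particularly when $\eta\in\Pia_\a\cap\Pia_\be\cap\Pi_0$ fails to be orthogonal to the roots of $\S\cup\S'$ appearing in $A(\S,\S')$, so that $v_\a v_\be(\eta)$ is not itself simple.
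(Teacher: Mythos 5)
Your handling of the supremum claim and of the forward implication follows the paper's own route: the identity $N(w_{\a,\be})=N(w_{\a,k\d-\theta_\S})\cup N(w_{\be,k\d-\theta_{\S'}})$ coming from \eqref{nunione} gives the supremum, and the reduction of membership in $\mathcal I_{\a,k\d-\theta_\S}$ to the condition $v_\be x\in V_{\a,k\d-\theta_\S}$ via Lemma \ref{Valfa} is exactly the paper's mechanism. The step you label ``one verifies'' is a genuine computation, but it is the one the paper performs: for $\gamma\in N(x)$ one uses $(v_\be(\gamma),\theta_\S)=(\gamma,\be)=0$ and \eqref{exP} to get $ht_{B_\S}(v_\be(\gamma))=ht_\s(v_\be(\gamma))=ht_\s(\gamma)\ge 1$, and then excludes $ht_\s(\gamma)=2$ because $k\d-\gamma$ would have to lie in a single component of $\Pi_0$ while both $\a$ and $\be$ occur in its support. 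You have the right tools in hand, so this is an elision rather than an error.

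The genuine gap is in the converse. You route the contradiction through the root $w_{\a,\be}(\eta)=u_{\S,\S'}v_\a v_\be(\eta)$ for $\eta\in L(x)\cap\Pi_0$, and must then control how $u_{\S,\S'}$ moves a positive root of $\D_0$ that need not lie in $\D(A(\S,\S'))$; you yourself identify this case analysis as ``the main obstacle'' and do not carry it out, so the proof is not complete as written. The detour through $u_{\S,\S'}$ is in fact unnecessary: once you know (via Lemma \ref{orto}) that $x\in\Wa_\a\cap\Wa_\be$ and (via Lemma \ref{Valfa}) that $v_\be x\in V_{\a,k\d-\theta_\S}$, the defining condition of $V_{\a,k\d-\theta_\S}$ is imposed on $N(v_\be x)\supseteq v_\be(N(x))$, i.e.\ \emph{before} $u_{\S,\S'}v_\a$ is ever applied. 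Hence for $\eta\in L(x)\cap\Pi_0$ one has on the one hand $ht_{B_\S}(v_\be(\eta))=1$, and on the other hand $(v_\be(\eta),\theta_\S)=(\eta,\be)=0$ together with \eqref{exP} gives $ht_{B_\S}(v_\be(\eta))=ht_\s(v_\be(\eta))=ht_\s(\eta)=0$, since $v_\be\in W_0$ and $\eta\in\Pi_0$. This is the paper's converse, it closes in two lines, and replacing your final case analysis with it completes the argument.
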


\begin{proof}{Since $N(uv_\a v_\beta)=N(w_{\a,k\d-\theta_\S})\cup N(w_{\beta,k\d-\theta_{\S'}})$, it follows that $$w_{\a,\beta} =\sup\{w_{\a,k\d-\theta_\S}, w_{\beta,k\d-\theta_{\S'}}\}.$$}
Take $x\in  W((\Pia_\a\cap\Pia_\be)\setminus\Pi_1)\backslash W(\Pia_\a\cap\Pia_\be)$.   We now show that $w_{\a,\be}x\in \mathcal I_{\a,k\d-\theta_{\S}}\cap\ \mathcal I_{\be,k\d-\theta_{\S'}}$. We may assume that $x\ne 1$, { in particular $|\S|>1$.} It suffices to see that  $w_{\a,\be}x$ is $\sigma$-minuscule. Writing  $w_{\a,\be}x=w_{\a,k\d-\theta_{\S}}v_\be x$, by the proof of  Theorem \ref{slamu}, it suffices to prove  
  that $v_\beta x\in V_{\a,k\d-\theta_{\S}}$. Since we already know that $v_\beta \in V_{\a,k\d-\theta_{\S}}$, we are left with proving that $ht_{B_\S}(v_\beta(\gamma))=1$ for each $\gamma\in
  N(x)$. We have  $(v_\beta(\gamma),\theta_{\S})=(\gamma,\beta)=0$, hence
 $ht_{B_\S}(v_\beta(\gamma))= ht_\s(v_\beta(\gamma))=
ht_\s(\gamma)\geq 1$. 
Actually,  the latter $\s$-height is $1$: if it were $2$, then $k\d-\gamma$ would belong to some component, but this is impossible since both $\a$ and $\beta$ belong to its support. 

Vice versa, assume $w_{\a,\be}x\in  \mathcal I_{\a,k\d-\theta_{\S}}\cap\ \mathcal I_{\be,k\d-\theta_{\S'}}$, with $\ell(w_{\a,\be}x)=\ell(w_{\a,\be})+\ell(x)$ { and $x\ne1$}. By Lemma \ref{orto}, we get $v_\beta x\in \Wa_\a, 
 v_\a x\in \Wa_\be$; but $v_\beta\in \Wa_\a$, hence $x\in\Wa_\a$ and similarly 
 $x\in\Wa_\be$. We are left with proving that $L(x)\subseteq\Pi_1$, so take $\gamma\in N(x)\cap \Pia$. Recall that $v_\beta x\in V_{\a,k\d-\th_{\S}}$, hence
\begin{equation}\label{p}
1=ht_{B_\S}(v_\beta(\gamma))= ht_\s(v_\beta(\gamma)).\end{equation}
 If $\gamma\notin\Pi_1$, then $v_\beta(\gamma)\in \Da_0$, so $ht_\s(v_\beta(\gamma))=0$ against \eqref{p}. Therefore $\gamma\in\Pi_1$, as desired.
 \end{proof}

\begin{prop}\label{vuoto}
Assume $\mu, \mu'\in \mathcal M_\s$, and $\a, \be\in \Pia$.
Then $\mathcal I_{\a,\mu}\cap\mathcal I_{\be,\mu'}\ne \emptyset$
if and only if either $\a=\be$ and $\mu=\mu'$, or 
$\mu=k\d-\theta_\S$, $\mu'=k\d-\theta_\S'$, with $\S\ne \S'$ and  $\theta_\S$, $\theta_{\S'}$ of type 1, $\a\in\S'_{\ov\ell}$, and $\be\in\S_{\ov\ell}$. 
\end{prop}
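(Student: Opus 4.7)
The ``if'' direction splits into two scenarios: when $\a=\be$ and $\mu=\mu'$ the intersection equals $\mathcal I_{\a,\mu}$, non-empty precisely when $\a\in\Pia_\mu$ by Theorem \ref{min}; when the second listed scenario holds, non-emptiness is exactly Proposition \ref{nonvuoto}. The substance is therefore the ``only if'' direction. So take $w\in\mathcal I_{\a,\mu}\cap\mathcal I_{\be,\mu'}$. Since $w$ is bijective on $\Pia$, $\a=\be$ forces $\mu=\mu'$, so I may assume $\a\ne\be$, hence $\mu\ne\mu'$. The plan is to eliminate the possibilities that some $\mu$ or $\mu'$ has $\s$-height $3$, or has $\s$-height $2$ with a type-$2$ $\theta_\S$; once these are excluded, Proposition \ref{nonvuoto} produces the claimed structural conditions and the length condition follows from Theorem \ref{min} (intersecting $\Pia_\mu=A(\S)_{\bar\ell}$ with the component $\S'$).

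Suppose first, for a contradiction, that $\mu=k\d+\be_0$ with $\be_0\in\Pi_1$. By Proposition \ref{piofap}, $\a\in\Pi_1$ (either $\a=\be_0$ if $|\Pi_1|=1$, or $\a$ is the other element of $\Pi_1$ if $|\Pi_1|=2$), and $w\ge w_{\a,\mu}=s_{\be_0}w_{0,\be_0}w_0$, so $N(w)\supseteq\{\be_0\}\cup\{\be_0+\gamma\mid \gamma\in\Dp_0,(\gamma,\be_0)<0\}$. If $\mu'=k\d+\be_1$ has $\s$-height $3$ too, the symmetric analysis forces $|\Pi_1|=2$, $\{\a,\be\}=\{\be_1,\be_0\}$, and $w$ swaps $\be_0,\be_1$ modulo $k\d$. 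In the case $(\be_0,\be_1)<0$ the root $\be_0+\be_1$ is positive with $w^{-1}(\be_0+\be_1)=\be_0+\be_1-2k\d<0$, so $\be_0+\be_1\in N(w)$ has $\s$-height $2$, contradicting $N(w)\subset\Da_1$; in the case $(\be_0,\be_1)=0$, Remark \ref{phisigma}(2) yields $\gamma\in\Pi_0$ non-orthogonal to both $\be_0$ and $\be_1$, and an analogous computation places the $\s$-height $2$ root $\be_0+\gamma+\be_1$ in $N(w)$. If instead $\mu'=k\d-\theta_{\S'}$ has $\s$-height $2$, Lemma \ref{basic}(2) applied to a decomposition of $\mu$ of the form $(\theta_{\S'}+\be_0)+(k\d-\theta_{\S'})$ (and an analogous decomposition in the orthogonal sub-sub-case) forces a specific positive root into $N(w)$; combining this with an inversion of $w_{\be,\mu'}$ supplied by Proposition \ref{theta}(5) that pairs negatively with it, biconvexity manufactures a $\s$-height $2$ element of $N(w)$. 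A parallel argument excludes the type-$2$ case: if $\mu=k\d-\theta_\S$ with $\theta_\S$ of type $2$ then $\a\in\S$ and $w\ge sv_\a$ with $s$ the involution of Lemma \ref{esse}, whose $N(s)=\{\eta\in\Dap_1\mid (\eta,\theta_\S^\vee)=-2\}$ has strong pairing with $\theta_\S^\vee$; combined with the Proposition \ref{theta}/Lemma \ref{esse} description of $N(w_{\be,\mu'})$, this again produces a $\s$-height $2$ inversion.

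Once these eliminations are complete, both $\mu,\mu'$ are of the form $k\d-\theta_\S$, $k\d-\theta_{\S'}$ with types $1$; $\mu\ne\mu'$ gives $\S\ne\S'$; and Proposition \ref{nonvuoto} yields $\a\in\S'$, $\be\in\S$, while the length conditions $\a\in\S'_{\bar\ell}$, $\be\in\S_{\bar\ell}$ follow from $\Pia_\mu=A(\S)_{\bar\ell}$ intersected with $\S'$ (Theorem \ref{min}). The main obstacle is the biconvexity argument of the second paragraph: in each sub-case one must exhibit a specific pair of inversions whose sum is a root of $\s$-height $2$, and the trickiest step is the orthogonal configuration $(\be_0,\be_1)=0$, where the Dynkin-diagram connectivity guaranteed by Remark \ref{phisigma}(2) is essential to bridge $\be_0$ and $\be_1$ through a suitable root of $\Pi_0$.
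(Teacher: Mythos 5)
Your overall strategy is the same as the paper's: handle the positive cases via Proposition \ref{nonvuoto}, and for the ``only if'' direction eliminate, by exhibiting a forbidden inversion, the configurations where some $\mu$ has $\s$-height $3$ or equals $k\d-\theta_\S$ with $\theta_\S$ of type $2$. Your first elimination sub-case ($\mu=k\d+\be_0$, $\mu'=k\d+\be_1$ with $(\be_0,\be_1)<0$) is correct and in fact cleaner than the paper's, since $w^{-1}(\be_0+\be_1)=\be_0+\be_1-2k\d$ is computed exactly. But the orthogonal sub-case contains a false claim: Remark \ref{phisigma}(2) says only that \emph{each} root of $\Pi_1$ is non-orthogonal to \emph{each} component of $\Pi_0$; it does not produce a single $\gamma\in\Pi_0$ non-orthogonal to both $\be_0$ and $\be_1$. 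In type $A_n^{(1)}$ with $\Pi_1=\{\a_0,\a_p\}$, $3\le p\le n-2$, no such simple root exists. The correct bridge is $\theta_\S$ for a component $\S$ (so that $\be_0+\theta_\S+\be_1\in\Da$), but then your ``analogous computation'' breaks down: you know $w^{-1}(\be_0)$ and $w^{-1}(\be_1)$ exactly, but not $w^{-1}(\theta_\S)$, so you cannot conclude $\be_0+\theta_\S+\be_1\in N(w)$ by direct evaluation. The paper instead first shows $\theta_\S+\a\in N(w)$ using the explicit form of $w_{\be,k\d+\a}$ and then applies convexity of $N(ws_\a)=N(w)\cup\{k\d+\be\}$; some such convexity step is unavoidable here.

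The remaining two eliminations are gestured at rather than proved. For the mixed case ($\mu$ of $\s$-height $3$, $\mu'=k\d-\theta_{\S'}$), your Lemma \ref{basic}(2) step does give $\theta_{\S'}+\be_0\in N(w)$ (since $k\d-\theta_{\S'}=w(\be)\notin N(w)$), and adding $\mu'$ by convexity yields $\mu=k\d+\be_0\in N(w)$, a root of $\s$-height $3$ --- not the ``$\s$-height $2$ element'' you describe; as written the conclusion of your biconvexity step is not identified. For the type-$2$ case your argument consists of the phrase ``strong pairing \dots produces a $\s$-height $2$ inversion'' with no roots exhibited; this case genuinely splits ($\theta_\S$ complex versus short compact, and $k=1$ versus $k=2$, where $s$ is no longer $s_{\d-\theta_\S}$), and in the short compact case ($B_n^{(1)}$ with $\Pi_1=\{\a_{n-1}\}$) the contradiction comes from $\theta_{\S'}\in N(w_{\a,\mu})$, a root of $\s$-height $0$, so no argument based on producing a root of $\s$-height $\ge2$ can cover it. These are genuine gaps, not merely omitted routine details.
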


\begin{proof}
In Proposition \ref{nonvuoto}, we settled the cases $\mu=k\d-\theta_\S$, $\mu'=k\d-\theta_\S'$, with $\S\ne \S'$ and  $\theta_\S$, $\theta_{\S'}$ of type 1. It remains to prove that $\mathcal I_{\a,\mu}\cap\mathcal I_{\be,\mu'}= \emptyset$
in all other non trivial cases. 
\par
We suppose by contradiction that there is $w\in \mathcal I_{\a,\mu}\cap\mathcal I_{\be,\mu'}$ and treat the possibile cases one by one.
\par
\item {\it 1.} 
Let  $\a,\be\in\Pi_1$ and $\mu=k\d+\be$, $\mu'=k\d+\a$. Since $N(w_{\be,k\d+\a})\subset N(w)$ and  $w_{\be,k\d+\a}^{-1}(\a)=-k\d+\be$ we see that $\a\in N(w)$.
 If $\Pi_0=\emptyset$ then $(\a,\be)\ne 0$, so $k\d+\a+\be\in\Da$ and this implies that $k\d+\a+\be\in N(w)$. 
 This is impossible since $ht_\s(k\d+\a+\be)=4$. If  $\Pi_0\ne\emptyset$ and $\S|\Pi_0$ then $\theta_\S+\a\in\Da$, so $k\d-\theta_\S-\a\in\Dap$. 
 Since $k\d-\a=\theta_\S+k\d-\theta_\S-\a$, using  the explicit expression for $w_{\be,k\d+\a}$ given in Proposition \ref{piofap},  we see that $\theta_\S+\a=s_\a(\theta_\S)\in N(w)$. Since $\a+\be+\theta_\S\in\Da$, this implies that  $(k\d+\be)+(\a+\theta_\S)\in N(w)$ and again this gives a contradiction.
\par
\item {\it 2.} 
Let $\a,\gamma \in \Pi_1$, $\mu=k\d+\gamma$, $\mu'=k\d-\theta_\S$.  As above, we see that $\theta_\S+\gamma\in N(w_{\a,k\d+\gamma})\subset N(w)$. But then $k\d-\theta_\S+\theta_\S+\gamma=k\d+\gamma\in N(w)$ and this is impossible.
\par
\item {\it 3.} 
Let $\mu=k\d-\theta_\S$, $\mu'=k\d-\theta_{\S'}$ with $\theta_\S$ of type 2. We have clearly $\S\ne\S'$.  Assume first $\theta_\S$ complex. If $\d-\theta_\S$ is a simple root then $\Pia=\S\cup \Pi_1$ contrary to the assumption that $\S\ne\S'$. Thus $\d-\theta_\S$ is not simple. We now rely on the explicit description of $w_{\a,\mu}$ given in Lemma \ref{paola}. If $\gamma\in\Pi_1$, then $\gamma\in N(s_{\d-\theta_\S})$, hence $2\d-2\theta_\S-\gamma\in N(s_{\d-\theta_\S})\subset N(w_{\a,\mu})\subset N(w)$. But then $(2\d-\theta_{\S'})+(2\d-2\theta_\S-\gamma)=4\d-\theta_{\S'}-\gamma-\theta_{\S}\in N(w)$ and this is not possible. It remains to check the case when $\theta_\S$ is short compact. There is only a case when this occurs and $\Pi_0$ has more than one component, namely type $B^{(1)}_n$ with $\Pi_1=\{\a_{n-1}\}$. By the explicit description of $w_{\a,\mu}$ given for this case in Lemma \ref{technical}, we see that $\theta_{\S'}\in N(w_{\a,\mu})\subset N(w)$ and this gives clearly a contradiction.
\end{proof}

\section{Maximal elements and dimension formulas}
{In this Section we give a parametrization of the maximal ideals in $\Wab$ and compute their dimension.
 
As a first step in our classification of maximal ideals, we determine which $\mathcal I_{\a,\mu}$ admits maximum.}
Let $\Pi^1_1$ denote the set of roots of type 1 in $\Pi_1$.
\begin{prop}\label{mmu} \ \begin{enumerate}
\item If $\theta_\S$ is of type 1 (resp. type 2) and $\a\in{\G(\S)}_\lu\cup\Pi^1_1$ (resp. $\a\in{\S}_\lu$) then $\mathcal I_{\a,k\d-\theta_\S}$ has maximum.
\item Suppose that $\a\in\S',\be\in\S$,  and $\S\ne\S'$. If $\theta_\S, \theta_{\S'},\a,\be'$,  are all roots of type 1, then  $\mathcal I_{\a,k\d-\theta_{\S}}\cap
 \mathcal I_{\be,k\d-\theta_{\S'}}$ has maximum.
 \item If $\Pi^1_1=\Pi_1$ and $\a,\be\in\Pi_1$, then   $\mathcal I_{\a,\be+k\d}$ has maximum whenever it is nonempty.
 \end{enumerate}
\end{prop}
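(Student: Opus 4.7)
The plan is to combine Theorem~\ref{slamu}, Theorem~\ref{minimax}(2), and the general fact recalled in Subsection~\ref{posetparabolic} that the coset poset $G'\backslash G$ has a unique maximum whenever $G$ is a finite Coxeter group and $G'$ is a standard parabolic subgroup of $G$. Each statement of the proposition will thus reduce to two verifications: finiteness of the ambient group, and standard parabolicity of the distinguished subgroup. Finiteness of $\Wa_\a$ is immediate, since $\Pia_\a\subseteq\Pia\setminus\{\a\}$ and the removal of any single vertex from an affine Dynkin diagram yields a finite-type Dynkin diagram; the same reasoning handles $W(\Pia_\a\cap\Pia_\be)$ required in part~(2).

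For part~(1), Theorem~\ref{slamu} identifies $\mathcal I_{\a,\mu}$ with $\Wa_{\a,\mu}\backslash\Wa_\a$, where $\Wa_{\a,\mu}=W(\Pia^*_{\a,\mu})$; by Definition~\ref{Walfa} one has $\Pia^*_{\a,\mu}\subseteq\Pia_\a$ in all cases \emph{except} when $\mu=k\d-\theta_\S$ with $\theta_\S$ of type $1$, $|\S|>1$, and $\a\in A(\S)\setminus(\S\cup\Pi_1)$. Under the hypotheses of part~(1) we always have $\a\in\S\cup\Pi_1$, since $\G(\S)_\lu\subseteq\S$ and $\Pi^1_1\subseteq\Pi_1$, so the exceptional case is ruled out, and $\Wa_{\a,\mu}=W(\Pia_\a\setminus B_\mu)$ is a standard parabolic subgroup of the finite group $\Wa_\a$. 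The type-$2$ subcase is even easier, since Definition~\ref{Walfa} explicitly never activates the $\ast$-case when $\theta_\S$ is of type~$2$. Part~(2) is an immediate application of Theorem~\ref{minimax}(2): the subset $(\Pia_\a\cap\Pia_\be)\setminus\Pi_1$ is obtained from $\Pia_\a\cap\Pia_\be$ by deleting some simple roots, so $W((\Pia_\a\cap\Pia_\be)\setminus\Pi_1)$ is tautologically a standard parabolic subgroup of the finite group $W(\Pia_\a\cap\Pia_\be)$.

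For part~(3), if $|\Pi_1|=1$ then by Proposition~\ref{piofap}(1) the poset $\mathcal I_{\a,\a+k\d}=\{s_\a w_{0,\a}w_0\}$ is a singleton and the claim is trivial. If $|\Pi_1|=2$ and $\Pi_1=\{\a,\be\}$, then $\mu=\be+k\d$ gives $B_\mu=\{\be\}$, so $\Pia^*_{\a,\mu}=\Pia_\a\setminus\{\be\}\subseteq\Pia_\a$ realizes $\Wa_{\a,\mu}$ as a standard parabolic subgroup of $\Wa_\a$ (irrespective of whether $\be$ actually lies in $\Pia_\a$; if it does not, the quotient collapses to a single coset). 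I expect the only mildly delicate point to be the bookkeeping around Definition~\ref{Walfa} needed to exclude the non-standard $\ast$-case under each of the stated hypotheses; once this is in place, the proposition follows formally from the general coset-space structure.
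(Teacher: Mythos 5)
Your proposal is correct and follows essentially the same route as the paper: identify $\mathcal I_{\a,\mu}$ with $\Wa_{\a,\mu}\backslash\Wa_\a$ via Theorem~\ref{slamu} (and use Theorem~\ref{minimax}(2) for the intersection case), check that the stated hypotheses exclude the one situation in Definition~\ref{Walfa} where $\Wa_{\a,\mu}$ fails to be standard parabolic, and invoke the existence of a maximum for a parabolic coset poset of a finite reflection group from Subsection~\ref{posetparabolic}. Your write-up is in fact slightly more explicit than the paper's about why the exceptional non-parabolic case is ruled out and why the ambient groups are finite, but the argument is the same.
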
 
\begin{proof} 
{ Recall that, by Theorem \ref{slamu}, $\mathcal I_{\a,\mu}$ is  isomorphic to $\Wa_{\a,\mu}\backslash \Wa_\a$.} The subgroup $\Wa_{\a,\mu}$ is standard parabolic for any $\a$ and $\mu$ except { when $\mu=k\d-\theta_\S$, $\theta_\S$ of type 1, $|\S|>1$, and $\a\in A(\S)\setminus (\S\cup\Pi_1)$.}
The existence of the maximum in cases (1) and (3) follows now from subsection \ref{posetparabolic}. The same applies to $\mathcal I_{\a,k\d-\theta_{\S'}}\cap
 \mathcal I_{\be,k\d-\theta_{\S}}$,
by Theorem \ref{minimax}.
\end{proof}

{We already saw in Proposition \ref{fuori} that, in many cases, the maximal elements of $\mathcal I_{\a,\mu}$ are maximal in $\Wab$. The next result deals with the remaining cases.}
\begin{prop}\label{mmassimo}
{If $\Pi_1=\Pi_1^1=\{\a,\be\}$ (with possibly $\a=\be$), $\theta_\S$ is of type 1, and $w\in \mathcal I_{\a,k\d-\theta_\S}$, then $w\le max(\mathcal I_{\a,k\d+\be})$.}
\end{prop}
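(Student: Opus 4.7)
The plan is to reduce the claim to showing that $w^{*}:=\max\mathcal I_{\a, k\d-\theta_\S}$ satisfies $w^{*}\le W:=\max\mathcal I_{\a, k\d+\be}$, both of which exist by Proposition \ref{mmu} (points (1) and (3), using $\a\in\Pi_1=\Pi_1^1$). Since every $w\in\mathcal I_{\a, k\d-\theta_\S}$ is dominated by $w^{*}$, transitivity then yields $w\le w^{*}\le W$.

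To prove $w^{*}\le W$ I would invoke Proposition \ref{fuori}: it suffices to exhibit a simple affine root $\gamma\in\Pia$ such that $w^{*}s_\gamma\in\Wab$ with $\ell(w^{*}s_\gamma)=\ell(w^{*})+1$. Indeed, the maximality of $w^{*}$ inside $\mathcal I_{\a, k\d-\theta_\S}$ rules out $w^{*}s_\gamma$ lying in $\mathcal I_{\a, k\d-\theta_\S}$, so Proposition \ref{fuori} forces $w^{*}s_\gamma\in\mathcal I_{\a, k\d+\be}$, and hence $w^{*}<w^{*}s_\gamma\le W$.

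The candidate for $\gamma$ is pinned down by the requirement $(w^{*}s_\gamma)(\a)=k\d+\be$, which via the reflection formula reduces, assuming $(\a,\gamma^\vee)=-1$, to $w^{*}(\gamma)=\be+\theta_\S$. First I would check this is well-posed: since $\theta_\S$ is of type 1, Remark \ref{rsigma} gives $(\be,\theta_\S^\vee)=-r_\S=-1$, so $s_{\theta_\S}(\be)=\be+\theta_\S$ is a root, evidently positive and of $\s$-height one (as $\be\in\Pi_1^1$ and $\theta_\S\in\D_0$). Lemma \ref{basic}(1) applied to $\mu=k\d-\theta_\S$ then shows $\be+\theta_\S\notin N(w^{*})$, since $(\be+\theta_\S)+\mu=\be+k\d\in\Dap$; so $\gamma:=(w^{*})^{-1}(\be+\theta_\S)$ is a positive real affine root, and Weyl-invariance of the pairing together with the length computation $|\be+\theta_\S|^{2}=|\be|^{2}+2(\be,\theta_\S)+|\theta_\S|^{2}=|\be|^{2}$ (so $|\gamma|=|\be|=|\a|$) yields $(\a,\gamma^\vee)=-1$, as required.

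The hard part is to verify that $\gamma$ is a simple affine root of $\Pia$. For this I would exploit the factorization $w^{*}=w_\a u^{*}$ provided by Theorem \ref{slamu}, with $w_\a$ as in Lemma \ref{technical}(1) (the shortest element of $W(A(\S))$ mapping $\a$ to the highest root $k\d-\theta_\S$ of $\D(A(\S))$) and $u^{*}$ the longest element of the parabolic quotient $\Wa_{\a, k\d-\theta_\S}\backslash\Wa_\a$. Using $w_\a^{-1}(k\d)=k\d$ and $w_\a^{-1}(\theta_\S)=k\d-\a$ one obtains $w_\a^{-1}(\be+\theta_\S)=w_\a^{-1}(\be)+k\d-\a$, and one then analyzes the action of $(u^{*})^{-1}$ on this vector via the inversion-set descriptions from Proposition \ref{theta}(5,7). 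In the degenerate case $|\Pi_1|=1$ (so $\be=\a$), where $\mathcal I_{\a, k\d+\a}=\{s_\a w_{0,\a}w_0\}$ is a singleton by Proposition \ref{piofap}(1), I would bypass the general argument and verify $N(w^{*})\subseteq N(s_\a w_{0,\a}w_0)=\{\a\}\cup\{\a+\xi:\xi\in\Dp_0\setminus\D(\Pi_{0,\a})\}$ directly, using that every $\eta\in N(w^{*})\subset\Dap_1$ has $c_\a(\eta)=1$ together with Lemma \ref{basic}(1) to exclude roots whose $\a$-shift is not in $\Dp_0$.
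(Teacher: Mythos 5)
Your overall strategy coincides with the paper's: reduce to showing $w^{*}=\max\mathcal I_{\a,k\d-\theta_\S}\le\max\mathcal I_{\a,k\d+\be}$ (both maxima exist by Proposition \ref{mmu}), append a single simple reflection to $w^{*}$ so as to stay in $\Wab$ while leaving $\mathcal I_{\a,k\d-\theta_\S}$, and invoke Proposition \ref{fuori}. Your preliminary analysis is also sound: you correctly deduce that any simple $\gamma$ with $w^{*}s_\gamma\in\Wab$ and $\ell(w^{*}s_\gamma)=\ell(w^{*})+1$ must satisfy $w^{*}(\gamma)=\be+\theta_\S$, and you correctly verify that $\be+\theta_\S$ is a positive root of $\s$-height one not lying in $N(w^{*})$, so that $\gamma:=(w^{*})^{-1}(\be+\theta_\S)$ is a positive root with $(\a,\gamma^\vee)=-1$. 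The problem is that the proof stops exactly where the real work begins: you never prove that this positive root is \emph{simple} (equivalently, that $N(w^{*})\cup\{\be+\theta_\S\}$ is again an inversion set of an element of length $\ell(w^{*})+1$). The sentence about analyzing ``the action of $(u^{*})^{-1}$ on this vector via the inversion-set descriptions'' is a plan rather than an argument, and it is far from routine, since it requires rather precise control of the longest element of the quotient $\Wa_{\a,k\d-\theta_\S}\backslash\Wa_\a$.

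That missing verification is where essentially all of the content of the paper's proof lies. The paper proceeds in the opposite direction: it names the candidate concretely as the unique $\a_\S\in\S$ with $(\a_\S,\a)\ne0$, proves by induction along a saturated chain from $w_{B_\S}w_{0,\a}$ up to $w_{0,B_\S}w_{0,\Pia_\a}$ that $w_{0,B_\S}w_{0,\Pia_\a}(\a_\S)=\theta_\S$ (the induction step hinges on the bound $ht_{B_\S}(\nu)\le 2$ for all $\nu\in\D(\Pia\setminus\{\a\})$), and then computes $ht_\s(w_{max}(\a_\S))=ht_\s(w_{\a,k\d-\theta_\S}(\theta_\S))=1$, so that $w_{max}s_{\a_\S}\in\Wab$ and Proposition \ref{fuori} finishes the job. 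Some equivalent of this computation must appear in your write-up. Your sketch of the degenerate case $|\Pi_1|=1$ has the same defect: knowing that every $\eta\in N(w^{*})$ has $c_\a(\eta)=1$ does not yet show that $\eta-\a$ is a root lying in $\Dp_0\setminus\D(\Pi_{0,\a})$ (in general a positive root with $c_\a=1$ need not have $\eta-\a\in\D$), which is what membership in $N(s_\a w_{0,\a}w_0)=\{\a\}\cup s_\a(\Dp_0\setminus\D(\Pi_{0,\a}))$ requires. As it stands the proposal is an accurate reduction of the proposition to its hardest step, not a proof of it.
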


\begin{proof}
{By Proposition \ref{mmu}, $\mathcal I_{\a,k\d-\theta_\S}$ has maximum. From subsection \ref{posetparabolic}, we see that its maximum is $w_{max}=w_{\a,k\d-\theta_\S}w_{0,B_\S}w_{0,\Pia_\a}$, where $w_{0,B_\S}$ is the longest element of $W(\Pia_\a\backslash B_\S)$ and $w_{0,\Pia_\a}$ is the longest element of $\Wa_\a$. Clearly there is a root $\a_\S\in\S$ such that $(\a_\S,\a)\ne 0$, and we note that this root is necessarily unique, for, otherwise, $\S\cup \{\a\}$ would contain a loop, and this is only possible in the adjoint case of type $A_n$. But in this case $\a$ is not of type 1.

 We now show that $w_{0,B_\S}w_{0,\Pia_\a}(\a_\S)=\theta_\S$. This is clear if $|\S|=1$ so we assume $|\S|>1$.
 Recall that $w_{0,\a}$ is the longest element of $W((\Pi_0)_\a)$. 
 Let $w_{B_\S}$ be the longest element of $W((\Pi_0)_\a\backslash B_\S)$. Obviously
  $N(w_{B_\S}w_{0,\a})\subset N(w_{0,B_\S}w_{0,\Pia_\a})$ and we know that $w_{B_\S}w_{0,\a}(\a_\S)=\theta_\S$. We show that $v(\a_\S)=\theta_\S$ for any $v$ such that $w_{B_\S}w_{0,\a}\le v\le w_{0,B_\S}w_{0,\Pia_\a}$. 
This is proven by induction on $\ell(v)-\ell(w_{B_\S}w_{0,\a})$. Assume that $v(\a_\S)=\theta_\S$ and $w_{B_\S}w_{0,\a}\le v<vs_\gamma\le w_{0,B_\S}w_{0,\Pia_\a}$ with $\gamma\in \Pia_\a$. 
  We need to prove that $vs_\gamma(\a_\S)=\theta_\S$.  Set $(\a_\S,\gamma^\vee)=-r$ with $r\in \ganz^+$. Then $vs_\gamma(\a_\S)=\theta_\S+rv(\gamma)$. Observe that 
  $v\in V_{\a, k\d-\theta_\S}$, so $ht_{B_\S}(v(\gamma))=1$. It follows that $ht_{B_\S}(vs_\gamma(\a_\S))=2+r$. We claim that   $ht_{B_\S}(\nu)\le 2$ for any $\nu\in \D(\Pia\backslash \{\a\})$. Indeed this is obvious if $|\Pi_1|=1$ and, in the hermitian symmetric case it follows from \eqref{exP} and the observation that, in this case, $ht_\s(\nu)\le 1$. We conclude that $r=0$ and $vs_\gamma(\a_\S)=\theta_\S$. 
  
  Having shown that $w_{0,B_\S}w_{0,\Pia_\a}(\a_\S)=\theta_\S$, we have $w_{max}(\a_\S)=w_{\a,k\d-\theta_\S}(\theta_\S)$. Now 
  $ht_\s(w_{\a,k\d-\theta_\S}(\theta_\S))=(w_{\a,k\d-\theta_\S}(\theta_\S),k\d-\theta_\S^\vee)+ht_{B_\S}(\theta_\S)=1$. This proves that $w_{max}s_{\a_\S}\in \Wab$, so $w_{max}s_{\a_\S}\in \mathcal I_{\a,k\d+\be}$ so $w_{max}\le max(\mathcal I_{\a,k\d+\be})$. 
}
\end{proof}

Proposition \ref{mmu} allows us to give the following definition:
\begin{defi}\label{defmi} If $\th_\S$ is of type 1 (resp. type 2) and 
 $\a\in{\G(\S)}_\lu$ (resp. $\a\in{\S}_\lu$), we let $MI(\a)$ be the maximum of  $\mathcal I_{\a,k\d-\theta_\S}$. If $\S\ne\S'$ and $\theta_\S, \theta_{\S'},\a\in\S',\be\in\S$ are all roots of type 1, we let $MI(\a,\be)$ be  the maximum of $\mathcal I_{\a,k\d-\theta_{\S'}}\cap
 \mathcal I_{\be,k\d-\theta_{\S}}$. If $\a,\be\in \Pi^1_1$ with $\mathcal I_{\a,k\d+\be}\ne\emptyset$, we let $MI(\a)$ be its maximum.
\end{defi}

{We are finally ready to state the main result of the paper, which gives a complete parametrization of the set of maximal abelian $\b^{\bar 0}$-stable subspaces in $\Wab$.} For notational reasons, it is convenient to fix an arbitrary total order $\prec$ on the components of $\Pi_0$. 
\begin{theorem}\label{spazioparametri}  The 
maximal $\b^{\bar 0}$-stable abelian subalgebras are parametrized by the set 
\begin{equation}\label{M}\mathcal M=\left(\bigcup_{\substack{\S|\Pi_0\\ \S\text{ of type 1}}}\G(\S)_\lu\right)\cup\left(\bigcup_{\substack{\S|\Pi_0\\ \S\text{ of type 2}}}\S_\lu\right)\cup\left( \bigcup_{\substack{\S,\S'|\Pi_0,\,\S\prec\S'\\
\S,\,\S'\text{ of type 1}}}(\S_\lu\times\S'_\lu)\right)\cup\Pi^1_1.\end{equation}
\end{theorem}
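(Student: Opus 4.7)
The plan is to establish a bijection between $\mathcal M$ and the set of maximal elements of $\Wab$ by sending each parameter to the corresponding $MI(\cdot)$ from Definition \ref{defmi}.

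For surjectivity, given a maximal $w\in \Wab$, Proposition \ref{max} locates $w$ in some $\mathcal I_{\a,\mu}$, and Theorem \ref{min} restricts $\a$ to $\Pia_\mu$. I split on the form of $\mu$. If $\mu=\be+k\d$ with $\be\in \Pi_1^1$, then $\a\in \Pi_1$ by Proposition \ref{piofap}; since $ht_\s(\mu)=3$ the escape case of Proposition \ref{fuori} is vacuous, so Proposition \ref{mmu}(3) gives $w=MI(\a)$ with $\a\in \Pi_1^1$. If $\mu=k\d-\theta_\S$ with $\theta_\S$ of type $2$, then $\a\in \S_\lu\subset \Pi_0$; Proposition \ref{fuori} blocks escape (as $\a\notin \Pi_1$) and Proposition \ref{mmu}(1) yields $w=MI(\a)$. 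If $\mu=k\d-\theta_\S$ with $\theta_\S$ of type $1$, then $\a\in A(\S)_\lu$; using the partition $A(\S)=\G(\S)\cup\Pi_1\cup \bigcup_{\S'\ne\S}\S'$ I subdivide further. The case $\a\in \G(\S)_\lu$ is handled as above. The case $\a\in \Pi_1\cap A(\S)_\lu$ (with $\a$ long, hence of type $1$ outside the adjoint setting) is handled by Proposition \ref{mmassimo}, which forces $w\le \max \mathcal I_{\a,k\d+\gamma}$ for the appropriate $\gamma\in\Pi_1$, reducing to the previous case $\mu=\gamma+k\d$. The remaining case $\a\in \S'\setminus \G(\S')$ with $\S'\ne \S$ is dealt with using Propositions \ref{vuoto} and \ref{nonvuoto}: they imply that $\theta_{\S'}$ is necessarily of type $1$ and produce $\beta\in \S_\lu$ with $w\in \mathcal I_{\beta,k\d-\theta_{\S'}}$; then Proposition \ref{nonvuoto2} identifies $w$ with $MI(\a,\beta)$.

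For existence and maximality in the other direction, each parameter in $\mathcal M$ yields the element $MI(\cdot)$ as the maximum of the relevant poset $\mathcal I_{\a,\mu}$ or intersection $\mathcal I_{\a,\mu}\cap \mathcal I_{\beta,\mu'}$ by Proposition \ref{mmu}. These maxima are in fact maximal in all of $\Wab$: Proposition \ref{fuori} rules out upward escape whenever $\a\notin \Pi_1$ (covering the first three families of $\mathcal M$), and when $\mu=\be+k\d$ the $\s$-height $3$ of $\mu$ again rules out any escape. Injectivity of the map $\mathcal M\to \max(\Wab)$ holds because distinct parameters determine distinct pairs $(\a,w(\a))$, the only potential duplication being the symmetry $(\a,\beta)\leftrightarrow (\beta,\a)$ in the pair case, which is resolved by the fixed total order $\prec$ on components.

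The main obstacle is the type-$1$ subcase $\a\in \S'\setminus \G(\S')$: one must simultaneously produce a compatible $\beta\in \S_\lu$, verify that $\theta_\S$ and $\theta_{\S'}$ are both of type $1$, and show that the maximum of the corresponding intersection—which by Theorem \ref{minimax} is isomorphic to the standard parabolic quotient $W((\Pia_\a\cap\Pia_\beta)\setminus \Pi_1)\backslash W(\Pia_\a\cap\Pia_\beta)$—coincides with $w$ itself. This hinges on the pairing phenomenon made precise through Propositions \ref{vuoto}, \ref{nonvuoto}, \ref{nonvuoto2}, and \ref{mmu}(2), whose combined force is exactly what the bijection requires.
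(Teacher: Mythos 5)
Your overall architecture (the map $MI:\mathcal M\to\Wab$, surjectivity via Proposition \ref{max} and a case split on $\mu$, maximality via Propositions \ref{fuori} and \ref{mmassimo}, injectivity via the intersection theorem) matches the paper's proof. However, there is one genuine gap, and it sits exactly at the point you yourself identify as ``the main obstacle'': the subcase where $w$ is maximal in $\mathcal I_{\a,k\d-\theta_\S}$ with $\theta_\S$ of type 1 and $\a\in\S'\setminus\G(\S')$ for a component $\S'\ne\S$. You claim that Propositions \ref{vuoto}, \ref{nonvuoto}, \ref{nonvuoto2} and \ref{mmu}(2) ``produce $\beta\in\S_\lu$ with $w\in\mathcal I_{\beta,k\d-\theta_{\S'}}$''. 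They do not. Those results characterize \emph{when} an intersection $\mathcal I_{\a,\mu}\cap\mathcal I_{\be,\mu'}$ is nonempty and describe its minimum, its poset structure, and the existence of its maximum --- but all of this presupposes that the element under consideration already lies in the intersection. Nothing in them forces a \emph{given} maximal element $w$ of $\mathcal I_{\a,k\d-\theta_\S}$ to map some second simple root to $k\d-\theta_{\S'}$. This is precisely the case in which $\Wa_{\a,\mu}$ fails to be standard parabolic, so $\mathcal I_{\a,k\d-\theta_\S}$ need not have a unique maximal element, and one must prove separately that \emph{each} of its maximal elements pairs up.

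The paper supplies this missing step as Lemma \ref{coppie}, whose proof is a substantive argument: writing $w=w_{\a,k\d-\theta_{\S'}}x$ with $x$ maximal in $V_{\a,k\d-\theta_{\S'}}$, one shows (using Lemma \ref{M2} and the non-parabolicity of $\Wa_{\a,\mu}$) that there must exist $\gamma$ with $ht_{B_{\S'}}(x(\gamma))=2$, hence $x(\gamma)=\theta_{\S'}$, and then extracts $\eta\in\S'$ with $v_\eta\le x$, so that $w\in\mathcal I_{\eta,k\d-\theta_\S}$ by Proposition \ref{fuori}. Without this lemma (or an equivalent argument) your surjectivity proof breaks down in exactly the case that produces the new family $\S_\lu\times\S'_\lu$ of parameters, which is the genuinely novel part of the theorem. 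The rest of your outline --- including the reduction of the case $\a\in\Pi_1\cap A(\S)_\lu$ via Proposition \ref{mmassimo}, the maximality of the $MI(\cdot)$'s, and the injectivity via Theorem \ref{minimax} and the order $\prec$ --- is sound and agrees with the paper.
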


\begin{rem}\label{PS} In the adjoint case, there is just one component $\S$ in $\Pi_0$, which is the set of simple roots of $\g$. In the r.h.s of \eqref{M} the only surviving term is 
$\S_\lu$, so $\mathcal M$ is the set of long simple roots of $\g$. This parametrization has been first discovered by Panyushev and R\"ohrle \cite{PR}, \cite{Panadv}. \end{rem}

Now we begin to work in view of the proof of Theorem \ref{spazioparametri}. {We need to study the maximal elements of $\mathcal I_{\a,\mu}$. This is immediate when $\mathcal I_{\a,\mu}$ has maximum, more delicate in the other cases. We also need to determine when a maximal element of $\Wab$ occurs in different $\mathcal I_{\a,\mu}$'s. The description of the intersections among different $\mathcal I_{\a,\mu}$'s given in Section \ref{cap} is the key to solve both problems. }We start with the following 
\begin{lemma}\label{coppie} Assume $\S\ne\S'$. If  $\theta_\S, \theta_{\S'},\a\in\S$,   are all roots of type 1 and $w\in \mathcal I_{\a,k\d-\theta_{\S'}}$ is maximal, then there is $\eta\in\S'$ such that $w(\eta)=k\d-\theta_{\S}$.
\end{lemma}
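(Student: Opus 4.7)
The proof begins by upgrading the maximality of $w$ in $\mathcal I_{\a,k\d-\theta_{\S'}}$ to maximality in the whole poset $\Wab$. Since $\a\in\S\subseteq\Pi_0$, in particular $\a\notin\Pi_1$, so Proposition \ref{fuori} ensures that every $v\in\Wab$ with $v\ge w$ still belongs to $\mathcal I_{\a,k\d-\theta_{\S'}}$; combined with the hypothesis, this gives $w$ maximal in $\Wab$. Proposition \ref{max} then supplies some $\eta'\in\Pia$ and $\mu'\in\mathcal M_\s$ with $w(\eta')=\mu'$. If $\eta'\ne\a$, then $w$ sits in the non-empty intersection $\mathcal I_{\a,k\d-\theta_{\S'}}\cap\mathcal I_{\eta',\mu'}$, and Proposition \ref{vuoto} (since $\theta_\S$ and $\theta_{\S'}$ are of type $1$) forces $\mu'=k\d-\theta_\S$ and $\eta'\in\S'_{\lu}$, producing the required $\eta$.

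The remaining case is that $\eta'=\a$ is the only simple root sent to $\mathcal M_\s$ by $w$; here I would pass to the isomorphism of Theorem \ref{slamu}. Writing $w=w_{\a,k\d-\theta_{\S'}}u$ with $u\in V_{\a,k\d-\theta_{\S'}}\subseteq\Wa_\a$ via Lemma \ref{orto} and Lemma \ref{Valfa}, and factoring $w_{\a,k\d-\theta_{\S'}}=u_{\S,\S'}\tilde v_\a$ in analogy with Lemma \ref{u<w} (with $\tilde v_\a\in W(\S)$ the minimal element sending $\a$ to $\theta_\S$), the analysis reduces to understanding $u$. Since $B_\mu=B_{\S'}\subseteq\S'$ by Lemma \ref{Bsigma}, the requirement $ht_{B_\mu}(\gamma)=1$ for every $\gamma\in N(u)$ forces $u$ to act trivially on every component of $\Pia_\a$ on which the $B_{\S'}$-height identically vanishes, so effectively $u$ permutes the roots of $\D(\S')$. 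Maximality of $w$ then makes $u$ maximal among elements of $W(\S')$ whose inversion set is contained in $\{\gamma\in\D^+(\S')\mid(\gamma,\theta_{\S'}^\vee)=1\}$.

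The final step is to apply Proposition \ref{theta}(6) to this $u$: by biconvexity and the constraint $(\theta_{\S'},\theta_{\S'}^\vee)=2$, $N(u)$ cannot contain both summands of any decomposition $\theta_{\S'}=\gamma_1+\gamma_2$ in $\D^+(\S')$; maximality of $u$ rules out missing both summands as well. Proposition \ref{theta}(6) therefore yields a long simple root $\eta\in\S'_{\lu}$ with $u(\eta)=\theta_{\S'}$, and then
\[
w(\eta)=w_{\a,k\d-\theta_{\S'}}(\theta_{\S'})=u_{\S,\S'}\tilde v_\a(\theta_{\S'})=u_{\S,\S'}(\theta_{\S'})=k\d-\theta_\S,
\]
using Lemma \ref{thetasigmaminimal} together with the fact that $\tilde v_\a\in W(\S)$ fixes $\theta_{\S'}\in\D(\S')$. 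The main technical obstacle is the ``cannot miss both summands'' claim: excluding this requires a biconvexity verification showing that any $u\in V_{\a,k\d-\theta_{\S'}}$ missing both summands of some decomposition of $\theta_{\S'}$ can be properly extended within $V_{\a,k\d-\theta_{\S'}}$, which is the genuinely non-formal part of the argument.
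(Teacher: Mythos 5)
Your opening reduction is sound: since $\a\in\S\subseteq\Pi_0$, Proposition \ref{fuori} does upgrade maximality in $\mathcal I_{\a,k\d-\theta_{\S'}}$ to maximality in $\Wab$, and in the branch where Proposition \ref{max} happens to produce a witness $\eta'\ne\a$, Proposition \ref{vuoto} does pin down $\eta'\in\S'_\lu$ and $\mu'=k\d-\theta_\S$ exactly as you say. But Proposition \ref{max} only guarantees \emph{some} pair $(\eta',\mu')$, and nothing prevents the only witness from being $(\a,k\d-\theta_{\S'})$ itself; so the entire burden falls on your second branch, and that is where the argument breaks down. Writing $w=w_{\a,k\d-\theta_{\S'}}u$ with $u\in V_{\a,k\d-\theta_{\S'}}$, your claim that $u$ ``effectively permutes the roots of $\D(\S')$'' and is maximal among elements of $W(\S')$ is not correct: $u$ lives in $\Wa_\a=W(\Pia\cap\a^\perp)$, and $N(u)$ consists of roots of $\Dap_\a$ of $B_{\S'}$-height $1$, most of which lie outside $\D(\S')$ (any root of positive $\s$-height passing once through $B_{\S'}$ qualifies; compare the $E_8$ example, where $B_{\S_2}=\{\a_7\}$). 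In particular Proposition \ref{theta}(6) cannot be applied to $u$ with $R=\D(\S')$; one must first extract from $N(u)$ the biconvex subset $U=\{\be\in N(u)\mid \theta_{\S'}-\be\in\Dap\}$ and apply Proposition \ref{theta}(6) to the element of $W(\S')$ it determines, which is exactly the mechanism of Lemma \ref{u<w}(2)--(3) that the paper invokes.

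More seriously, the step you yourself flag as ``the genuinely non-formal part'' --- that maximality forbids $N(u)$ from missing both summands of some decomposition of $\theta_{\S'}$ --- is precisely the content of the lemma, and you leave it unproved. The paper's proof supplies it as follows: for $|\S'|>1$ the subgroup $\Wa_{\a,k\d-\theta_{\S'}}$ is nontrivial, so the maximal $u$ is not the longest element of $W(\Pia_\a)$ and some $\gamma\in\Pia_\a$ has $u(\gamma)>0$; maximality forces $ht_{B_{\S'}}(u(\gamma))\in\{0,2\}$; if the value were $0$ for every such $\gamma$, convexity would force each $u(\gamma)$ to be simple and then $N(u)$ would be all of $\{\be\in\Dap_\a\mid ht_{B_{\S'}}(\be)>0\}$, which contains $\theta_{\S'}$ (of $B_{\S'}$-height $2$) and contradicts $u\in V_{\a,k\d-\theta_{\S'}}$; hence some $u(\gamma)$ has $B_{\S'}$-height $2$ and, being minimal among such roots, equals $\theta_{\S'}$ by Lemma \ref{M2}. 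Only then does the extraction of $v_\eta\le u$ with $\eta\in\S'$ (and the final appeal to Proposition \ref{fuori} to conclude $w(\eta)=k\d-\theta_\S$) go through. Without this argument, and with the erroneous identification of $u$ as an element of $W(\S')$, your proof has a genuine gap at its core. Note also that the case $\S'=\{\theta_{\S'}\}$, where $V_{\a,k\d-\theta_{\S'}}=\{1\}$ and the conclusion is immediate from Lemma \ref{thetasigmaminimal}, needs to be treated separately, since Lemma \ref{M2} and the above dichotomy only operate when $|\S'|>1$.
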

\begin{proof}Write $w=w_{\a,k\d-\theta_{\S'}}x$ with $x$ maximal in $V_{\a,k\d-\theta_{\S'}}$. If $\S'=\{\theta_{\S'}\}$, then by Lemma \ref{M2} and Theorem \ref{slamu}, $x=1$, so $w(\theta_{\S'})=u_{\S,\S'}v_\a(\theta_{\S'})=k\d-\theta_{\S}$.

If $|\S'|>1$, then by Definition \ref{Walfa}, we have that $\Wa_{\a,k\d-\theta_{\S'}}\ne \{1\}$. It follows that $x$ cannot be the longest element of $W(\Pia_\a)$, hence there is a root $\gamma$ in $\Pia_\a$ such that $x(\gamma)>0$. Since $x$ is maximal, then  $ht_{B_{\S'}}(x(\gamma))\ne 1$, 
hence $ht_{B_{\S'}}(x(\gamma))\in\{0,2\}$. Next we exclude that  $ht_{B_{\S'}}(x(\gamma))=0$ for all $\gamma$. We  start with proving that 
 if  $ht_{B_{\S'}}(x(\gamma))= 0$, then $x(\gamma)$ is simple.  
Indeed, if $x(\gamma)-\be\in \Dap_\a$ with $\be\not\in B_{\S'}$, then, by convexity of $N(x)$, we have that $\be\in N(x)$, contradicting the fact that $x\in V_{\a,k\d-\theta_{\S'}}$. If, for all  roots $\gamma$ in $\Pia_\a$ such that $x(\gamma)>0$ we have that $x(\gamma)\in \Pia\backslash B_{\S'}$, then, 
arguing as in Proposition \ref{max}, we see that $N(x)$ is the  set of roots $\be$ in $\Dap_\a$ such that $ht_{B_{\S'}}(\be)>0$. Since $(\theta_{\S'},\theta_{\S'}^\vee)=2$ and $|\S'|>1$, we see that this contradicts again the fact that $x\in V_{\a,k\d-\theta_{\S'}}$. \par
Therefore  there is $\gamma$ such that $ht_{B_{\S'}}(x(\gamma))= 2$. Then, arguing as above, we see that $x(\gamma)$ is minimal among the roots $\be$ such that $ht_{B_{\S'}}(\be)= 2$. 
By Lemma \ref{M2}, we have that $x(\gamma)=\theta_{\S'}$.

{Arguing as in the proof of parts (2), (3)  of  Lemma \ref{u<w}, one checks that there is $\eta\in\S'$
 such that $v_\eta\le x$.  It follows that $w_{\a,k\d-\theta_{\S'}}v_\eta\le w$. 
 Since $w_{\a,k\d-\theta_{\S'}}v_\eta=u_{\S,\S'}v_\a v_\eta\in\mathcal I_{\eta,k\d-\theta_\S}$, 
 by Proposition \ref{fuori} we have $w\in \mathcal I_{\eta,k\d-\theta_\S}$ as desired.} 
\end{proof}

{ We are now ready to prove Theorem \ref{spazioparametri}.}
\begin{proof}[Proof of Theorem \ref{spazioparametri}]Consider the map $MI:\mathcal M\to \Wab$ defined in Definition \ref{defmi}. Let  $MAX$ be the set of maximal abelian $\b^{\bar 0}$-stable subalgebras of $\g^{\bar 1}$. 
 By Propositions \ref{fuori} and \ref{mmassimo}, it is clear that $MI(m)\in MAX$ for any $m\in\mathcal M$. We  next prove that $MI:\mathcal M\to MAX$ is bijective.
First we show that $MI(\mathcal M)=MAX$. Let $w$ be maximal. By Proposition
\ref{max} we have that $w$ is maximal in $\I_{\a,\mu}$ for some $\mu\in \mathcal M_\s$ . If $\a\in\Pi_1$ and it is of type 2, then $\mu$ is of type 2, hence $\mu=k\d-\theta_\S$ with $\theta_\S$ of type 2, but this case is ruled out by Theorem \ref{min}. We can therefore assume $\a$ of type 1. From Proposition \ref{mmassimo} we deduce $\mu=\be+k\d$ so that $\a,\be\in\Pi^1_1$. Hence  $w=MI(\a)$. If $\a\notin\Pi_1$ then, by Proposition \ref{piofap},  we have that $\mu=k\d-\theta_\S$. If $\a\in\S$ and 
$\th_\S$ is of type 1 (resp. type 2), then by Theorem  \ref{min}, we have $\a\in {\G(\S)}_\lu$ (resp. $\S_\lu$) and by Proposition \ref{mmu} we have $w=MI(\a)$. Finally assume $\a\in\S'\ne\S$. In particular, by Theorem \ref{min},  $\a$, $\theta_\S$, and $\theta_{\S'}$ are of type 1. By Lemma \ref{coppie} and Proposition \ref{mmu} (2), we see that there is $\be\in \S'$ such that $w=MI(\a,\be)$.

Finally we prove that $MI$ is injective. Set 
\begin{equation}\label{y}Y=
\bigcup_{\substack{\S|\Pi_0\\ \S\text{ of type 1}}}\G(\S)_\lu\cup\bigcup_{\substack{\S|\Pi_0\\ \S\text{ of type 2}}}\S_\lu\cup\Pi^1_1.\end{equation}
If $\a,\be\in Y$, it follows readily from Theorem \ref{minimax} that $MI(\a)= MI(\be)$ implies $\a=\be$. Theorem \ref{minimax} also implies that   $MI(\a)\ne MI(\be,\gamma)$ for $\a\in Y$ and $(\be,\gamma)\in \S_\lu\times\S'_\lu$ with $\be,\gamma, \S,\S'$ of type 1. Suppose finally that $MI( \a,\be)=MI(\gamma,\eta)$ with $\a\in \S_\lu$, $\be\in \S'_\lu$, $\gamma\in \S''_\lu$, $\eta\in\S'''_\lu$, and $\S,\S',\S'',\S'''$ all of type 1, and $\S\prec \S'$, $\S''\prec \S'''$. Set $w=MI(\a,\be)=MI(\gamma,\eta)$. We have $w\in\mathcal I_{\a,k\d-\theta_{\S'}}\cap \mathcal I_{\gamma,k\d-\theta_{\S'''}}\ne\emptyset$. Thus either $\a=\gamma$ and $\S'=\S'''$ or $\gamma\in\S'$ and $\a\in \S'''$. In the first case we have $w(\eta)=k\d-\theta_\S$ so $\be=\eta$. In the second case we have $\S=\S'''$ and $\S''=\S'$ contradicting the fact that $\S''\prec\S'''$.
\end{proof}
\vskip5pt
As a complement to  Theorem \ref{spazioparametri}, we compute the dimension of maximal abelian subspaces.  \par
Recall from \eqref{dcn} that $g_R$ denotes the dual Coxeter number of a finite irreducible root system $R$. Suppose $\S$ is a component of $\Pi_0$. To simplify notation,  we set 
$g_\S=g_{\D(\S)}$ and, if $\th_\S$ is type 1, $g_{A(\S)}=g_{\D(A(\S))}$ (note that in this case
$\D(A(\S))$ is irreducible by Remark \ref{ascon}).
 Also recall from Section \ref{ta} that $K$ is the canonical central element of $\widehat L(\g,\s)$ and  $\bf g$ is its dual Coxeter number and from Section  \ref{bs}Ê that 
we denote by $a$ the squared length of a long root in $\Dap$.
\begin{lemma}\label{t} {Let $\gamma\in\Da_{re}$. Then\begin{enumerate}
\item $(k\d+\gamma)^\vee=\frac{a}{\Vert\gamma\Vert^2} K+\gamma^\vee$. In particular, $(k\d-\theta_\S)^\vee=r_\S K-\theta_\S^\vee$.
\item
If $\theta_\S$ is of type 1, then $g_{A(\S)}={\bf g}-g_\S+2$. 
{
In particular, if $\a\in A(\S)_\lu$, then $\ell(w_{\a,k\d-\theta_\S})={\bf g}-g_\S$.
}
\item If $\theta_\S$ is of type 2 and $\a\in \S_{\lu}$, then $\ell(w_{\a,k\d-\theta_\S})={\bf g}-1$.
\item If $\a, \be \in \Pi^1_1$, and $\be\ne \a$ if $|\Pi^1_1|=2$, then $\ell(w_{\be,k\d+\a})=\mathbf g-1$.
\end{enumerate}}
\end{lemma}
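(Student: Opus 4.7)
The plan is to treat the four items in order, with (1) supplying the normalization that drives the conceptual computation in (2), while (3) and (4) each reduce to length counts via the explicit reduced expressions already established. For (1), the starting point is that $\delta$ is isotropic and orthogonal to every finite root, whence $\|k\delta+\gamma\|^2=\|\gamma\|^2$ and $(k\delta+\gamma)^\vee=\tfrac{2k}{\|\gamma\|^2}\nu^{-1}(\delta)+\gamma^\vee$. A short computation starting from \eqref{C} then yields $\nu^{-1}(\delta)=\tfrac{a}{2k}K$, from which the formula for $(k\delta+\gamma)^\vee$ follows. For the specialization to $\mu=k\delta-\theta_\S$, Remark \ref{rsigma} gives $a/\|\theta_\S\|^2=r_\S$ in all cases except the adjoint one, which is checked separately.

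For (2), Proposition \ref{hr} identifies $\mu=k\delta-\theta_\S$ as the highest root of the irreducible root system $\D(A(\S))$ (irreducibility by Remark \ref{ascon}). Substituting $\mu^\vee=K-\theta_\S^\vee$ from (1) into the definition \eqref{dcn} of $g_{A(\S)}$ and using that $\mathrm{supp}(\mu)\subseteq A(\S)$ (Lemma \ref{fund}) forces the coefficient of $\alpha_i^\vee$ in $K-\theta_\S^\vee$ to vanish for each $\alpha_i\in\Pia\setminus A(\S)$; in particular, $c_{\alpha_i^\vee}(\theta_\S^\vee)=a_i^\vee$ for $\alpha_i\in\S\setminus\Gamma(\S)$. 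A telescoping computation then produces
\[
g_{A(\S)}=1+\mathbf{g}-\sum_{\alpha\in\S}c_{\alpha^\vee}(\theta_\S^\vee)=\mathbf{g}-g_\S+2,
\]
and the length formula follows from Proposition \ref{theta}(4) applied to $R=\D(A(\S))$ with the long simple root $\a$ mapping to the highest root: $\ell(w_{\a,\mu})=g_{A(\S)}-2=\mathbf{g}-g_\S$.

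For (3), Lemma \ref{technical}(2) gives $w_{\a,\mu}=sv_\a$ with additive lengths, and Proposition \ref{theta}(4) applied inside $\D(\S)$ gives $\ell(v_\a)=g_\S-2$, so it suffices to prove $\ell(s)=\mathbf{g}-g_\S+1$. For $k=1$ (types $B_n^{(1)}$ and $C_n^{(1)}$) this falls directly out of the explicit reduced expressions for $s$ in the proof of Lemma \ref{esse}; for $k=2$, $s=s_{\delta-\theta_\S}$ and Lemma \ref{esse} describes $N(s)=\{\beta\in\Dap_1:(\beta,\theta_\S^\vee)=-2\}$, which we enumerate via the short list of involutions having $\theta_\S$ of type 2.

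The main obstacle is (4). Writing $w_{\be,k\delta+\a}=s_\a w_{0,\a}w_0$ and noting that $L(w_{0,\a}w_0)=\Pi_0\setminus\Pi_{0,\a}$ does not contain $\a\in\Pi_1$, length additivity together with \eqref{poset} yield $\ell(w_{\be,k\delta+\a})=1+|\Dp_0|-|\Dp(\Pi_{0,\a})|$, so the statement reduces to the identity $|\Dp_0|-|\Dp(\Pi_{0,\a})|=\mathbf{g}-2$. This identity is not as cleanly tied to the earlier combinatorial machinery as (2); the plan is to verify it by a case-by-case check along the classification of involutions with $\a\in\Pi_1^1$, using the explicit dual marks $a_i^\vee$ of the affine Dynkin diagram of $\widehat L(\g,\s)$.
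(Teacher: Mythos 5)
Your treatment of parts (1) and (2) is essentially the paper's: (1) via \eqref{C} and the isotropy of $\d$, and (2) by reading $g_{A(\S)}$ off as $ht_{\Pia^\vee}\bigl((k\d-\theta_\S)^\vee\bigr)+1$ using Proposition \ref{hr} and part (1), then invoking Proposition \ref{theta}(4). Those parts are fine.

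The genuine gap is in (3) and (4). In both you make the correct reduction --- to $\ell(s)=\mathbf g-g_\S+1$ in (3), and to $|\Dp_0|-|\Dp(\Pi_{0,\a})|=\mathbf g-2$ in (4) --- but then you do not prove these identities: you announce a case-by-case verification over the classification of involutions and leave it unexecuted. These two identities are precisely the nontrivial content of (3) (for $k=2$) and of (4), so as written the proof is incomplete exactly where the work lies. The paper closes both uniformly with a single mechanism that your proposal is missing: the telescoping identity $w^{-1}(\l)=\l-\sum_{j}(\l,\be_j^\vee)\a_{i_j}$, where $N(w)=\{\be_1,\dots,\be_l\}$ (equation \eqref{eqpas}). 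Applied to $w=s$ with $\l=k\d-\theta_\S$ (using the description of $N(s)$ in Lemma \ref{esse}), and to $w=w_{\be,\a+k\d}$ with $\l=\a+k\d$ (after checking $(s_\a\gamma,\a^\vee)=1$ for $\gamma\in N(w_{0,\a}w_0)$), then converting to coroots and applying $ht_{\Pia^\vee}$, it yields $\theta_\S^\vee=(k\d-\theta_\S)^\vee-2\sum_j\a_{i_j}^\vee$ and $\be^\vee=K-\a^\vee-\sum_j\a_{i_j}^\vee$ respectively; since part (1) gives $ht_{\Pia^\vee}(K)=\mathbf g$, the lengths drop out with no case analysis. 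If you insist on the enumerative route, you must actually list and check the cases (all adjoint cases plus the configurations with $\theta_\S$ short of type 2 for (3); all hermitian and $|\Pi_1|=1$ configurations for (4)), which is a substantial omission; the coroot-height argument is both shorter and the intended use of part (1).
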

\begin{proof} {We compute, using \eqref{C}:
$$(k\d+\gamma)^\vee=\frac{2k}{\Vert\gamma\Vert^2}\nu^{-1}(\d)+\gamma^\vee=
\frac{k}{a_0}\frac{\Vert \d-a_0\a_0\Vert^2}{\Vert\gamma\Vert^2}K+\gamma^\vee.$$
A { direct } inspection shows that $\frac{k\Vert \d-a_0\a_0\Vert^2}{a_0}=a$. This proves (1). 
\par
To prove 
{
the first part of
(2)
we 
}
observe that $g_{A(\S)}=ht_{
{
\Pia
}
^\vee}((k\d-\theta_\S)^\vee)+1$. The result then follows readily from (1).
{
By Proposition \ref{theta} (\ref{4}) and \eqref{wamu}, we see that if $\theta_\S$ is of type 1 and $\a\in A(\S)_\lu$ then $\ell(w_{\a,k\d-\th_\S})=g_{A(\S)}-2={\bf g}-g_\S$.
} 
For (3) recall that $w_{\a,k\d-\theta_\S}=sv_\a$, $s$ being the element of $\Wa$ described in Lemma \ref{esse} and $v_\a$ the  element of minimal length in $W(\S)$ mapping $\a$ to $\theta_\S$. It follows that $\ell(w_{\a,k\d-\theta_\S})=\ell(s)+g_\S-2$. It is therefore enough to show that $\ell(s)=\mathbf g-g_\S+1$. Start from the following formula, which  is  a variation of e.g. \cite[Exercise 3.12]{Kac}. It is
easily proved by induction on $\ell(w)$:
\begin{equation}\label{eqpas}
w^{-1}(\l)=\l-\sum_{i=1}^l(\l,\beta_j^\vee)\a_{i_j}.
\end{equation}
Here $w\in\Wa,\,\l\in\ha^*$, $s_{i_1}\cdots s_{i_l}$ is a reduced expression of $w$ and 
$\beta_j=s_{i_1}\cdots s_{i_{j-1}}(\a_{i_j})$ (so that $N(w)=\{\beta_1,\ldots,\beta_l\}$ and $l=\ell(w)$).
Applying 	\eqref{eqpas} to $w=s$ and $\l=k\d-\theta_\S$ and using Lemma \ref{esse}, we obtain that 
$s(\l)=\l-2\sum_{i=1}^lr_j\a_{i_j}$, where $r_j=\frac{\Vert \l\Vert^2}{\Vert \beta_j\Vert^2}$. In turn, recalling that $s(\mu)=\th_\S$ and applying $\tfrac{2}{(\th_\S,\th_\S)}\nu^{-1}$ to the previous equality we get 
\begin{equation*}\label{98}
\th_\S^\vee=(k\d-\theta_\S)^\vee-2\sum_{j=1}^l\a_{i_j}^\vee.
\end{equation*}
In particular, taking $ht_{\Pia^\vee}$ of both  sides, we obtain $2\ell(s)=ht_{\Pia^\vee}((k\d-\theta_\S)^\vee)-g_\S+1$. Now use part (1) (recall  that $r_\S=2$) to finish the proof.
\par
{
To prove (4), we recall that, by Proposition \ref{piofap} $w_{\be,\a+k\d}=s_\a w_{0,\a}w_0$, 
hence $N(w_{\be,\a+k\d})=\{\a\}\cup s_\a N(w_{0,\a}w_0)$. By definition, for all 
$\gamma\in N(w_{0,\a}w_0)$, we  have $(\gamma, \a^\vee) <0$, hence 
$(s_\a \gamma, \a^\vee)>0$. {
Now it is clear that  $s_\a \gamma\ne\a$, so that $s_\a \gamma-\a$ is a root. Since
$$\frac{\Vert s_\a \gamma-\a\Vert^2}{\Vert\a\Vert^2}=1+
\frac{\Vert s_\a \gamma\Vert^2}{\Vert\a\Vert^2}-(s_\a \gamma,\a^\vee).$$
and $\a$ is long, then $(s_\a \gamma, \a^\vee)=2$ and $\Vert s_\a \gamma-\a\Vert=0$
or $(s_\a \gamma, \a^\vee)=1$. The first case implies 
$s_\a \gamma=c\d+\a$  for some $c\in \real\setminus\{0\}$. This is not possible, since   $ht_\s(s_\a \gamma)=1$.} Hence  $(s_\a \gamma, \a^\vee)=1$ for all $\gamma\in  N(w_{0,\a}w_0)$. Now, formula \eqref{eqpas} with $w=w_{\be,\a+k\d}$ and $\lambda =\a+k\d$ gives 
$\be=\a+k\d-2\a-\sum_{i=2}^l (\a,\be_i^\vee)\a_{i_j}$, with $\{\be_2, \dots, \be_l\}=
s_\a  N(w_{0,\a}w_0)$ and, applying $\tfrac{2}{(\a,\a)}\nu^{-1}$, 
$$\be^\vee=K-\a^\vee-\sum\limits_{i=2}^l \a_{i_j}^\vee.$$
It follows that $l={\mathbf g}-1$, as claimed.
}}
\end{proof}
{
  If $\g$  is a simple Lie algebra,
let $g_\g$ be the dual Coxeter number of the root system of $\g$.
It is know that  ${\bf g}=g_\g$ if $\g$ is simple and 
that  ${\bf g}=g_\k$ in the adjoint case $\g=\k\oplus\k$. The following result gives our dimension formulas.}

\begin{theorem}\label{dim} If $\theta_\S$ is of type 1  and $\a\in \G(\S)_\lu$,  then
\begin{equation}\label{dim1}\dim MI(\a)=  {\bf g}-g_\S+|\Dap_\a|-|\Dp(\Pia_{\a,\mu})|.\end{equation}
 If $\a\in \Pi_1^1$, or $\a\in \S_\lu$ with $\theta_\S$ of type 2,   then
\begin{equation}\label{dim1bis}\dim MI(\a)=  {\bf g}-1+|\Dap_\a|-|\Dp(\Pia_{\a,\mu})|.\end{equation}
If $\a\in\S_\lu$, $\beta\in\S'_\lu$,  with $\S\ne\S'$ and $\theta_\S$, $\theta_{\S'}$ of type 1, then 
\begin{equation}\label{dim2}\dim MI(\a,\be)= {\bf g}-2+|\Dp(\Pia_\a\cap\Pia_\be)|-|\Dp((\Pia_\a\cap\Pia_\be)\setminus\Pi_1)|.\end{equation}
\end{theorem}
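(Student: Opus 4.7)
The plan is to read each dimension as the length of the associated element of $\Wab$ (using the poset isomorphism between $\Wab$ and $\mathcal I_{ab}^\s$), and then to split that length as ``length of the minimum of the relevant sub-poset'' plus ``length of the longest coset representative in a parabolic quotient''. The first piece will be given by Lemma~\ref{t}, and the second by formula \eqref{poset}.

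For the formulas \eqref{dim1} and \eqref{dim1bis}, I would apply Theorem~\ref{slamu} to identify $\mathcal I_{\a,\mu}$ with the coset space $\Wa_{\a,\mu}\backslash \Wa_\a$ via the map $u\mapsto w_{\a,\mu}u$. In each of the three listed cases ($\a\in \Gamma(\S)_\lu$ with $\th_\S$ of type 1; $\a\in \S_\lu$ with $\th_\S$ of type 2; and $\a\in \Pi_1^1$), we have $\a\in\S\cup\Pi_1$, so Definition~\ref{Walfa} gives $\Pia^*_{\a,\mu}=\Pia_{\a,\mu}$ and $\Wa_{\a,\mu}$ is standard parabolic inside $\Wa_\a$. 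Hence \eqref{poset} shows that the longest element of $\Wa_{\a,\mu}\backslash\Wa_\a$ has length $|\Dap_\a|-|\Dp(\Pia_{\a,\mu})|$, giving
$$\dim MI(\a)=\ell(w_{\a,\mu})+|\Dap_\a|-|\Dp(\Pia_{\a,\mu})|.$$
Inserting $\ell(w_{\a,\mu})=\mathbf g-g_\S$ from Lemma~\ref{t}(2) produces \eqref{dim1}, and $\ell(w_{\a,\mu})=\mathbf g-1$ from Lemma~\ref{t}(3)--(4) produces \eqref{dim1bis} in both of its sub-cases.

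For formula \eqref{dim2} I would instead invoke Theorem~\ref{minimax}(2), which identifies the intersection $\mathcal I_{\a,k\d-\th_{\S'}}\cap \mathcal I_{\be,k\d-\th_\S}$ with the parabolic quotient $W((\Pia_\a\cap\Pia_\be)\setminus\Pi_1)\backslash W(\Pia_\a\cap\Pia_\be)$ via left multiplication by $w_{\a,\be}$. Formula \eqref{poset} again yields that the longest element of the quotient has length $|\Dp(\Pia_\a\cap\Pia_\be)|-|\Dp((\Pia_\a\cap\Pia_\be)\setminus\Pi_1)|$, so the problem reduces to computing $\ell(w_{\a,\be})=\ell(u_{\S,\S'})+\ell(v_\a)+\ell(v_\be)$. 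I would exploit the reduced factorisation $w_{\a,k\d-\th_\S}=u_{\S,\S'}v_\a$ supplied by Lemma~\ref{u<w}(3) which, combined with Lemma~\ref{t}(2), gives $\ell(u_{\S,\S'})+\ell(v_\a)=\mathbf g-g_\S$; since $\ell(v_\a)=g_{\S'}-2$ and $\ell(v_\be)=g_\S-2$ by Proposition~\ref{theta}(4), the three Coxeter numbers cancel out and I obtain $\ell(w_{\a,\be})=\mathbf g-2$. Plugging this in yields \eqref{dim2}.

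The only real thing to check is that in each application of \eqref{poset} the smaller reflection subgroup is standard parabolic inside the larger one; this is automatic in case \eqref{dim2}, where $(\Pia_\a\cap \Pia_\be)\setminus\Pi_1$ is manifestly a sub-simple-system of $\Pia_\a\cap \Pia_\be$, and in cases \eqref{dim1}--\eqref{dim1bis} it requires only the observation already made that the hypotheses exclude the exceptional instance of Definition~\ref{Walfa}. No heavier machinery is needed: the proof is essentially a bookkeeping exercise that assembles results already established.
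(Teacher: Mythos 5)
Your proposal is correct and takes essentially the same route as the paper: in each case the dimension is split as $\ell(\min)$ plus the length of the longest minimal coset representative computed via \eqref{poset}, the minimum's length being supplied by Lemma~\ref{t} (and, for \eqref{dim2}, by the reduced factorisation $w_{\a,\be}=u_{\S,\S'}v_\a v_\be$ from Lemma~\ref{u<w}(3) together with Proposition~\ref{theta}(4)). Your shortcut for $\ell(w_{\a,\be})$ --- adding $\ell(v_\be)=g_\S-2$ to $\ell(u_{\S,\S'})+\ell(v_\a)={\bf g}-g_\S$ --- is just a rearrangement of the paper's bookkeeping, and your remark that the hypotheses exclude the exceptional case of Definition~\ref{Walfa} (so $\Pia^*_{\a,\mu}=\Pia_{\a,\mu}$) matches the paper's observation.
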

\begin{proof} By \eqref{poset},  Theorems \ref{min} and \ref{slamu}  imply 
that, for $\a\in Y$ (cf. \eqref{y})
$$\dim MI(\a)=  \ell(w_{\a,\mu})+|\Dap_\a|-|\Dp(\Pia^*_{\a,\mu})|.$$
Note that in the current setting we have that $\Pia^*_{\a,\mu}=\Pia_{\a,\mu}$. Using part (2) of the previous Lemma we obtain 
\eqref{dim1}. Likewise, if $\theta_\S$ is of type 2 and $\a\in\S_\lu$, or
 $\a\in \Pi_1^1$
 then \eqref{dim1bis} follows from  (3), (4) in Lemma \ref{t}.\par
 Finally, we have to prove \eqref{dim2}.
Theorem \ref{minimax} gives  
$$\dim MI(\a,\be)=\ell(w_{\a,\be})+|\Dp(\Pia_\a\cap\Pia_\be)|-|\Dp((\Pia_\a\cap\Pia_\be)\setminus\Pi_1)|.$$ So it remains to show that { $\ell(w_{\a,\be})=\mathbf g-2$. From Lemma \ref{u<w} (3), we know} that $w_{\a,\be}=u_{\S,\S'}v_\a v_\be$ with $\ell(w_{\a,\be})=\ell(u_{\S,\S'})+\ell(v_\a)+\ell(v_\be)$, where  $v_\a, v_\be$ are the elements of minimal length mapping $\a,\be$, respectively, to the highest root of their component. 
By Proposition \ref{theta} (4), the lengths of the latter elements are $g_\S-2$,  $g_{\S'}-2$, respectively. We know that 
$u_{\S,\S'}v_\beta$ is the element of minimal length in $W(A(\S))$  mapping $\beta$ to $k\d-\theta_\S$. Hence  $\ell(u_{\S,\S'})+\ell(v_\beta)=g_{A(\S)}-2$. Using Lemma \ref{t} (1), we have
\begin{align*}\ell(u_{\S,\S'})&=g_{A(\S)}-g_{\S'}={\bf g}-g_\S-g_{\S'}+2,\end{align*} hence \eqref{dim2} is proven.
\end{proof}

\begin{rem}\label{r72} The dimension formula in the adjoint
case is a specialization of \eqref{dim1bis} and is  due to
Suter \cite{Suter}. For  a refinement of Suter's formula, see \cite[Theorem 8.13]{CP3}.
\end{rem}
\begin{example} We illustrate our results  when $\g$ is of type $E_8$ and $\g^{\bar 0}$ of type $A_1\times E_7$. 
 We denote by $\S_1$ the component of $\Pi_0$ of type $A_1$ and by $\S_2$ that of type $E_7$. 
$$
\begin{tikzpicture}[scale=1]
\pgfputat{\pgfxy(-1,0)}{\pgfbox[center,center]{$\Sigma_1$}}
\pgfputat{\pgfxy(8,0)}{\pgfbox[center,center]{$\Sigma_2$}}
\pgfputat{\pgfxy(0,-0.35)}{\pgfbox[center,center]{$\a_0$}}
\pgfputat{\pgfxy(1,-0.35)}{\pgfbox[center,center]{$\a_1$}}
\pgfputat{\pgfxy(2,-0.35)}{\pgfbox[center,center]{$\a_2$}}
\pgfputat{\pgfxy(3,-0.35)}{\pgfbox[center,center]{$\a_3$}}
\pgfputat{\pgfxy(4,-0.35)}{\pgfbox[center,center]{$\a_4$}}
\pgfputat{\pgfxy(5,-0.35)}{\pgfbox[center,center]{$\a_5$}}
\pgfputat{\pgfxy(5.45,0.95)}{\pgfbox[center,center]{$\a_8$}}
\pgfputat{\pgfxy(6,-0.35)}{\pgfbox[center,center]{$\a_6$}}
\pgfputat{\pgfxy(7,-0.35)}{\pgfbox[center,center]{$\a_7$}}
\draw{(6,0)--(7,0)};
\draw[fill=white]{(7,0) circle(3pt)};
\draw[fill=black]{(1,0) circle(3pt)};
\foreach \x in {0,1,2,3,4,5}
\draw{(\x,0)--(\x+1,0)};
\draw{(5,0)--(5,1)};
\foreach \x in {0,2,3,4,5,6}
\draw[fill=white]{(\x,0) circle(3pt)};
\draw[fill=white]{(5,1) circle(3pt)};
\end{tikzpicture}
$$
\vskip20pt
Then $A(\S_1)=\Pia\setminus\{\a_0\},\Gamma(\Sigma_1)=\emptyset,\,A(\Sigma_2)=\Pia\setminus\{\a_7\},
\Gamma(\Sigma_2)=\Sigma_2\setminus\{\a_7\}$.
Set 
$$\mu_1=-\theta_{\S_1}+\d,\quad\mu_2=-\theta_{\S_2}+\d,\quad\mu_3=\a_1+\d.$$
By Theorem \ref{min}, the poset $\mathcal I_{\a_i,\mu_j}$ is non empty if and only if 
$$(i,j)\in\{(k,1)\mid 1\leq k\leq 8\}\cup\{(k,2)\mid k=0,1,2,3,4,5,6,8\}\cup\{(1,3)\}.
$$
 By Theorem
\ref{spazioparametri}, the  maximal $\b^{\bar 0}$-stable abelian subspaces  are $MI(\a_i),\,2\leq i\leq 6$ and $i=8,\,MI(\a_i,\a_0),\,2\leq i\leq 8,\, MI(\a_1)$. More explicitly, 
\begin{align*}
&MI(\a_1)=\max \mathcal I_{\a_1,\mu_3},\\&MI(\a_i)=\max \mathcal I_{\a_i,\mu_2},\,2\leq i\leq 8, i\ne 7,\\
& MI(\a_i,\a_0)=\max(\mathcal I_{\a_0,\mu_2}\cap \mathcal I_{\a_i,\mu_1}),\,2\leq i\leq 8.\end{align*}
 The following table displays the relevant data necessary to  calculate the corresponding dimensions using the formulas provided in  Theorem \ref{dim}.
 The formula in question appears in the leftmost column.
 Recall that ${\bf g}=30$ and that $g_{E_7}=18$. 
 It is easily checked that  $B_{\mu_2}=\{\a_7\},B_{\mu_3}=\{\a_1\}$.
 $$
\begin{tabular}{l |l |l |l | l}
\eqref{dim1}&$MI(\a)$ &  type of $\Da_\a$& type of $\D(\Pia_{\a,\mu})$&$\dim MI(\a)$\\
\hline
&$MI(\a_6)$ & $A_5\times A_1$ &$A_5\times A_1$ &30-18+16-16=12\\
&$MI(\a_5)$ & $A_4\times A_1$&$A_4$ &30-18+11-10=13\\
&$MI(\a_4)$ & $A_3\times A_2\times A_1$&$A_3\times A_1\times A_1$&30-18+10-8=14\\
&$MI(\a_8)$ & $A_5\times A_2$&$A_5\times A_1$ &30-12+18-16=14\\
&$MI(\a_3)$ & $A_2\times A_4$&$A_2\times A_3$ &30-18+13-9=16\\
&$MI(\a_2)$ & $A_1\times D_5$&$A_1\times D_4$ &30-18+21-13=20\\\hline
\eqref{dim2}&$MI(\a,\be)$ &  type of $\D(\Pia_\a\cap\Pia_\be)$& type of&$\dim MI(\a)$\\
 & & & $\D((\Pia_\a\cap\Pia_\be)\setminus\Pi_1)$\\
&$MI(\a_0,\a_i),$ &$R$ &$R$ & 28=30-2\\
&$2\leq i\leq 8$ & & & \\\hline
\eqref{dim1bis}&$MI(\a)$ &  type of $\Da_\a$& type of $\D(\Pia_{\a,\mu})$&$\dim MI(\a)$\\
&$MI(\a_1)$ &$E_6$ &$E_6$ &29=30-1+36-36
\end{tabular}
$$
The symbol $R$ in the fourth to last line of the previous table means that if $\D(\Pia_\a\cap\Pia_\be)$ is of type $R$, then also  $\D((\Pia_\a\cap\Pia_\be)\setminus\Pi_1)$ is of type $R$. This happens because $\a_1\notin \Pia_{\a_0}$.
\end{example}

\begin{example}Here we illustrate another example corresponding to $\g$ of type $D_5$ and $\g^{\bar 0}$ of type $A_1\times B_3$. We denote
by $\S_1$ the component of $\Pi_0$ of type $A_1$ and by $\S_2$ that of type $B_3$. The corresponding picture is
$$
\begin{tikzpicture}[scale=1]
\pgfputat{\pgfxy(-1,0)}{\pgfbox[center,center]{$\Sigma_1$}}
\pgfputat{\pgfxy(5,0)}{\pgfbox[center,center]{$\Sigma_2$}}
\pgfputat{\pgfxy(0,-0.35)}{\pgfbox[center,center]{$\a_0$}}
\pgfputat{\pgfxy(1,-0.35)}{\pgfbox[center,center]{$\a_1$}}
\pgfputat{\pgfxy(2,-0.35)}{\pgfbox[center,center]{$\a_2$}}
\pgfputat{\pgfxy(3,-0.35)}{\pgfbox[center,center]{$\a_3$}}
\pgfputat{\pgfxy(4,-0.35)}{\pgfbox[center,center]{$\a_4$}}
\draw[fill=black]{(1,0) circle(3pt)};
\foreach \x in {1,2}
\draw{(\x,0)--(\x+1,0)};
\draw{(3,-0.05)--(4,-0.05)};
\draw{(3,0.05)--(4,0.05)};
\draw{(0,-0.05)--(1,-0.05)};
\draw{(0,0.05)--(1,0.05)};
\draw{(3.4,0.15)--(3.55,0)--(3.4,-0.15)};
\draw{(0.55,0.15)--(0.4,0)--(0.55,-0.15)};
\foreach \x in {0,2,3,4}
\draw[fill=white]{(\x,0) circle(3pt)};
\end{tikzpicture}
$$
\vskip20pt
Then $A(\Sigma_2)=\{\a_0,\a_1,\a_2\},
\Gamma(\Sigma_2)=\{\a_2\}$.
Set 
$$\mu_1=-\theta_{\S_1}+2\d=-\a_0+2\d,\quad\mu_2=-\theta_{\S_2}+2\d,\quad\mu_3=\a_1+2\d.$$
By Theorem \ref{min}, the poset $\mathcal I_{\a_i,\mu_j}$ is non empty if and only if 
$$(i,j)\in\{(0,1),(1,2),(1,3),(2,2)\}.
$$
 By Theorem
\ref{spazioparametri}, the  maximal $\b^{\bar 0}$-stable abelian subspaces  are $MI(\a_i),\,0\leq i\leq 2$. 
 Recall that ${\bf g}=8$ and that $g_{B_3}=5$. 
 It is easily checked that  $B_{\mu_1}=B_{\mu_3}=\{\a_1\},B_{\mu_2}=\{\a_3\}$.
 \par\noindent
 Since $\a_0=\theta_{\Sigma_1}$ is of type 2, we can calculate $\dim(MI(\a_0))$ using formula \eqref{dim1bis}.
 Since $\D(\Pia_{\a_0,\mu_1})=\Da_{\a_0}$, we obtain 
 $$\dim(MI(\a_0))=8-1=7.$$
Exactly the same calculation works for $\a=\a_1$, so that  $\dim(MI(\a_1))=7$. Finally $\dim(MI(\a_2))$ is computed by \eqref{dim1}.
We have that $\Pia_{\a_2}=\{\a_0,\a_4\}$, so $\Da_{\a_2}$ is of type $A_1\times A_1$ and $\Pia_{\a_2,\mu_2}=\Pia_{\a_2}$. Thus 
$$\dim(MI(\a_2))=8-5+2-2=3.$$
\end{example}

\begin{prop}\label{} In the hermitian case, if $\a\in\Pi_1$, we have
$$\dim(MI(\a))=\frac{\dim(\g^{\bar 1})}{2}.$$
\end{prop}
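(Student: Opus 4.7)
The plan is to identify $MI(\a)$ with one of the two canonical abelian halves of $\g^{\bar 1}$ arising from the hermitian symmetric pair structure. By Proposition \ref{om}, in the hermitian case $\g^{\bar 1}$ decomposes as a $\g^{\bar 0}$-module into two irreducible summands $V(-\a)\oplus V(-\be)$, where $\Pi_1=\{\a,\be\}$. Being a hermitian symmetric pair, $(\g,\g^{\bar 0})$ has the defining feature that both summands are in fact abelian subalgebras of $\g^{\bar 1}$; since they are mutually contragredient $\g^{\bar 0}$-modules, they have equal dimension $\dim(\g^{\bar 1})/2$, which is moreover the maximal dimension of any abelian subalgebra of $\g^{\bar 1}$. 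Write $\p^+=V(-\a)$ and $\p^-=V(-\be)$.

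As each $\p^\pm$ is $\g^{\bar 0}$-stable, and hence $\b^{\bar 0}$-stable and abelian, the poset isomorphism preceding Remark \ref{inL} yields elements $w^\pm\in\Wab$ with $\ell(w^\pm)=\dim(\g^{\bar 1})/2$. Since $\p^\pm$ already has maximal possible dimension, it is maximal in $\mathcal I_{ab}^\s$, so $w^\pm$ is maximal in $\Wab$, and Theorem \ref{spazioparametri} gives $w^\pm=MI(m^\pm)$ for some $m^\pm\in\mathcal M$. I would next show that $m^+=\a$ (and similarly $m^-=\be$), equivalently that $w^+\in\mathcal I_{\a,\be+k\d}$: this comes down to a matching between the highest-weight description of $V(-\a)$ as a $\g^{\bar 0}$-module and the ``top'' $w^+(\a)$ of the inversion set $N(w^+)$, accessed through the correspondence $\beta\leftrightarrow-\bar\beta$ of the poset isomorphism. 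Once established, the maximality of $w^+$ in $\Wab$ forces $w^+=\max\mathcal I_{\a,\be+k\d}=MI(\a)$, whence $\dim MI(\a)=\ell(w^+)=\dim(\g^{\bar 1})/2$.

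The main obstacle is exactly this identification $w^+=MI(\a)$, which requires a careful translation between the Kac-classification highest weights of the $\g^{\bar 0}$-irreducibles appearing in $\g^{\bar 1}$ and the combinatorics of affine Weyl group elements via their inversion sets. A purely computational alternative would invoke formula \eqref{dim1bis} directly, reducing the proposition to the identity
\[\mathbf{g}-1+|\Dap_\a|-|\Dp(\Pia_\a\setminus\{\be\})|=\tfrac{1}{2}\dim(\g^{\bar 1}),\]
which can then be verified case by case within Kac's list of hermitian symmetric pairs, using the fact that $\dim(\g^{\bar 1})/2$ equals the number of positive roots of $\g$ of $\s$-height one.
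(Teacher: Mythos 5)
Your overall strategy is the same as the paper's: split $\g^{\bar 1}$ (equivalently $t^{-1}\otimes\g^{\bar 1}$) into its two irreducible $\g^{\bar 0}$-halves, observe that each half is an abelian $\b^{\bar 0}$-stable subspace of dimension $\dim(\g^{\bar 1})/2$, and identify $MI(\a)$ with one of them. But the step you explicitly defer as ``the main obstacle'' --- showing that the maximal element $w^\pm$ attached to a half actually lies in $\mathcal I_{\a,\be+k\d}$, i.e.\ equals $MI(\a)$ --- is the entire content of the proof. Without it you only know that $\Wab$ contains maximal elements of length $\dim(\g^{\bar 1})/2$; you have not excluded that $MI(\a)$ is some \emph{other} maximal element (say one of the $MI(\gamma)$ with $\gamma\in\G(\S)_\lu$, or an $MI(\gamma,\eta)$), so the asserted equality $\dim MI(\a)=\dim(\g^{\bar 1})/2$ does not yet follow. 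Moreover, the route you sketch for closing the gap (matching Kac highest weights of the $\g^{\bar 0}$-irreducibles against $w^+(\a)$) is harder than necessary and, as stated, has the labels swapped: the summand corresponding to $MI(\a)$ is the one spanned by the root spaces $\widehat L(\g,\s)_{-\gamma}$ with $\gamma\ge\be$ and $\a\notin Supp(\gamma)$, i.e.\ $V(-\be)$, not $V(-\a)$. This is harmless for the dimension count but shows the deferred bookkeeping is where errors would creep in.

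The paper closes the gap in the opposite direction, and cheaply. Setting $z=MI(\a)$, formula \eqref{wamu}, Theorem \ref{min} and Lemma \ref{orto} give $z=s_\be z'$ with $z'\in W(\Pia\setminus\{\a\})$; hence every root of $N(z)$ has $c_\a=0$ and (being of $\s$-height $1$) $c_\be=1$, so $N(z)$ is contained in the inversion set $-F_\a$ indexing one of the two halves. Since that half is itself abelian and $\b^{\bar 0}$-stable (a two-line check on supports, rather than an appeal to the general theory of hermitian symmetric pairs), maximality of $z$ forces $N(z)=-F_\a$ and the dimension follows. This avoids both the highest-weight matching and your reliance on the external bound that $\dim(\g^{\bar 1})/2$ is the maximal dimension of an abelian subalgebra of $\g^{\bar 1}$ (which in the paper's logic is a \emph{consequence}, via Panyushev, discussed only in the remark that follows). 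Your fallback --- verifying \eqref{dim1bis} case by case over Kac's list --- would work but is not carried out and forfeits the uniformity the conceptual argument provides.
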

{
\begin{proof}  Let  $\Pi_1=\{\a,\be\}$. 
It is clear that a root of $\widehat L(\g, \s)$ has $\s$-height 1 if it is greater or equal than
exactly one among  $\a, \be$. Hence
\begin{equation}\label{dime}t^{-1}\otimes \g^{\bar 1}=\bigoplus_{\stackrel{\gamma\geq \a}{\be\notin Supp(\gamma)}}\widehat L(\g, \s)_{-\gamma}\oplus
\bigoplus_{\stackrel{\gamma\geq \be}{\a\notin Supp(\gamma)}}\widehat L(\g, \s)_{-\gamma}.\end{equation}
Since there is an { automorphism of the Dynkin diagram of $\widehat L(\g, \s)$} switching the elements of $\Pi_1$, the two summands in the r.h.s. of \eqref{dime} have both dimension 
$\dim(\g^{\bar 1})/2$. 
Set $F_{\a}=\{-\gamma\in\Da\mid \gamma\geq \be \text{ \it and } \a\not\in Supp(\gamma)\}$.
It is clear that, if $-\gamma', -\gamma''\in F_{\a}$, then $-\gamma'-\gamma''\not\in\Da$; moreover, for each 
$\eta\in \Dp_0$ { such that $-\gamma+\eta\in \Da$ we have that} $-\gamma+\eta\in F_{\a}$. It  
follows that $\displaystyle\bigoplus_{\stackrel{\gamma\geq \be}{\a\notin Supp(\gamma)}}\widehat L(\g, \s)_{-\gamma}$ is an abelian $\b^{\bar 0}$-stable subspace of $t^{-1}\otimes \g^{\bar 1}$, { hence, by Remark \ref{inL}, it corresponds to a $\b^{\bar 0}$-stable abelian subspace of $\g^{\bar 1 }$. In order to conclude the proof, we shall prove that  the element of $\Wab$ corresponding to the  latter subspace is  $MI(\a)$}. Set $z=MI(\a)$. By formula \eqref{wamu}, Theorem \ref{min}, and Lemma \ref{orto}, 
$z=s_{\be}z'$ with $z'\in W(\Pia\setminus\{\a\})$. It follows that  $N(z)\subseteq -F_{\a}$ and therefore, by the maximality of 
$MI(\a)$, that $N(z)=-F_{\a}$: this proves the claim.
\end{proof}
\bigskip

\begin{rem}
If we take $\Pi_1=\{\a_0, \be\}$, where $\a_0$ is the extra node of the extended  Dynkin diagram associated to $\g$, then { the sum $\i$ of all root subspaces corresponding to $\{\gamma\geq\be\mid \a_0\not\in Supp(\gamma)\}$} is an ideal of the Borel subalgebra of $\g$ corresponding to the simple system $\Pia\setminus\{\a_0\}$. 
Moreover, if $w$ is the element associated to this abelian ideal via Peterson's bijection { quoted in the Introduction}, then $N(w)=\{\gamma\geq\a_0\mid \beta\not\in Supp(\gamma)\}$.
Now Proposition \ref{piofap} implies that $w(\be)=\d+\a_0$, hence this ideal is included in the maximal ideal associated to $\be$ via the Panyushev{ bijection \cite{Panadv}}. By Theorem \ref {dim} and Suter { dimension} formula,  we obtain that
$\i$  is exactly this maximal ideal.
Notice that this applies to any simple root $\be$ of $\g$ that occurs with coefficient $1$ in the highest root of $\g$.
\end{rem}
}
\bigskip

\begin{rem} As recalled in the Introduction, Panyushev \cite{Pan} investigated the maximal eigenvalue of the Casimir element of $\g^{\bar 0}$ w.r.t. the Killing form of $\g$. In particular he showed that in the hermitian case $N=\frac{\dim(\g^{\bar 1})}{2}$ gives the required maximal eigenvalue.
By the previous Proposition, if $v_1,\ldots,v_N$ is any basis of $MI(\a)$, then $v_1\wedge\ldots\wedge v_N$ is an explicit eigenvector of maximal eigenvalue.
\end{rem}

\providecommand{\bysame}{\leavevmode\hbox to3em{\hrulefill}\thinspace}
\providecommand{\href}[2]{#2}

\vskip5pt
\footnotesize{

\noindent{\bf P.C.}: Dipartimento di Scienze
 Universit\`a  di Chieti-Pescara, Viale Pindaro 42,  65127 Pescara, Italy; 
{\tt cellini@sci.unich.it }

\noindent{\bf P.M.F.}: Politecnico di Milano, Polo regionale di Como, 
Via Valleggio 11, 22100 Como,
Italy; {\tt pierluigi.moseneder@polimi.it}

\noindent{\bf P.P.}: Dipartimento di Matematica, Sapienza Universit\`a di Roma, P.le A. Moro 2,
00185, Roma, Italy; {\tt papi@mat.uniroma1.it}

\noindent{\bf M.P.}: Dipartimento di Matematica, Sapienza Universit\`a di Roma, P.le A. Moro 2,
00185, Roma, Italy; {\tt pasquali@mat.uniroma1.it} }


\begin{thebibliography}{100}
\bibitem{CP1} P. Cellini, P. Papi, \emph{$\operatorname{ad}$-nilpotent
ideals of a {B}orel subalgebra,} J.\ Algebra \textbf{225} (2000), 130--141.
\bibitem{CP3}
P.~Cellini, P.~Papi, \emph{Abelian ideals of {B}orel subalgebras and affine {W}eyl groups},
  Adv.  Math. \textbf{187} (2004), 320--361.
  \bibitem{IMRN} P.~Cellini, P.~M\"oseneder Frajria, and P.~Papi, \emph{ Abelian Subalgebras in $\ganz_2$-Graded Lie Algebras and Affine Weyl Groups}
Internat. Math. Res. Notices (2004) {\bf 43},  2281--2304.

\bibitem{CKMP}
P. ~Cellini, V.~G. Kac, P.~M\"oseneder Frajria and P.~Papi, \emph{Decomposition rules for conformal pairs
associated to symmetric spaces and abelian subalgebras of $\ganz_2$-graded
Lie algebras},
  Adv. Math. \textbf{207} (2006), 156--204.
  \bibitem{Deo}
  V.~Deodhar, \emph{
A note on subgroups generated by reflections in Coxeter groups.} 
Arch. Math. (Basel) \textbf{53} (1989), no. 6, 543�546. 

  \bibitem{IM}  N.~Iwahori, H.~ Matsumoto, \emph{
On some Bruhat decomposition and the structure of the Hecke rings of p-adic Chevalley groups.}
Inst. Hautes �tudes Sci. Publ. Math. No. 25 (1965), 5--48. 

\bibitem{Jacobson} N. Jacobson, \emph{{S}chur's theorems on commutative matrices,}
Bull.\ Amer.\ Math.\ Soc.\ \textbf{50} (1944), 431--436
 \bibitem{MMJ}
V.~G. Kac, P.~M\"oseneder Frajria and P.~Papi, \emph{On the Kernel of the affine Dirac operator}, Moscow Math. J. (2008) \textbf{8} n.4,  759--788.
\bibitem{K1} B. Kostant, \emph{Eigenvalues of the {L}aplacian
and commutative {L}ie subalgebras,} Topology \textbf{3} (1965) suppl.\ 2,
147--159.
 \bibitem{Kac}
V.~G. Kac, \emph{Infinite-dimensional {L}ie algebras}, third ed., Cambridge
  University Press, Cambridge, 1990.
 \bibitem{KoIMRN} B.~Kostant \emph{The Set of Abelian ideals of a Borel
Subalgebra, Cartan Decompositions, and Discrete Series Representations}
Internat. Math. Res. Notices (1998) {\bf 5},  225--252.
 \bibitem{Koinv} B.~Kostant \emph{Powers of the {Euler} product and commutative subalgebras of a
  complex simple {Lie} algebra}, Invent. Math. \textbf{158} (2004), 181--226.
  \bibitem{Mal} A. Malcev, \emph{Commutative subalgebras of
semi-simple {L}ie algebras,} Bull.\ Acad.\ Sci.\ URSS S\'er.\ Math.\
\textbf{9} (1945), 291--300
[Izvestia Akad.\ Nauk SSSR]
A. I. Malcev, \emph{Commutative subalgebras of semi-simple {L}ie algebras,}
Amer.\ Math.\ Soc.\ Translation \textbf{40} (1951).
\bibitem{MP} P.~M\"oseneder Frajria, and P.~Papi, \emph{Casimir operators, abelian subalgebras and $\mathfrak u$-cohomology}, Rendiconti di Matematica, \textbf{27}, 265--276 (2007).
 \bibitem{Pan} D.~Panyushev, \emph{Isotropy representations, eigenvalues of a Casimir element, and commutative Lie subalgebras}, J. London Math. Soc. \textbf{64} (2001), no. 1, 61--80. 
 \bibitem{PR} D.~Panyushev,  G.~R\"ohrle, \emph{Spherical orbits and abelian ideals}, Adv. in Math. \textbf{159} (2001), 229-246. 
 \bibitem{Panadv} D.~ Panyushev, \emph{Abelian ideals of a Borel subalgebra and long positive roots}
Internat. Math. Res. Notices (2003) {\bf 35},  1889--1913.
\bibitem{Schur} I. Schur, \emph{{Z}ur {T}heorie der vertauschbaren {M}atrizen,}
J.\ Reine Angew.\ Math.\ \textbf{130} (1905), 66--76.
\bibitem{Suter}
R.~Suter, \emph{Abelian ideals in a Borel subalgebra 
of a complex simple Lie algebra},
 Invent. Math. \textbf{156} (2004), no. 1, 175--221.

\end{thebibliography}
\end{document}